\newlength{\hchng}
\newlength{\vchng}
\newtheorem{thm}{Theorem}[section]
\newtheorem{prop}[thm]{Proposition}
\newtheorem{cor}[thm]{Corollary}
\newtheorem{ass}[thm]{Assumption}
\newtheorem{lemma}[thm]{Lemma}
\newtheorem{definition}[thm]{Definition}
\newtheorem{preremark}[thm]{Remark}
\newenvironment{remark}{\begin{preremark}\rm}{\medskip \end{preremark}}
\numberwithin{equation}{section}
\newcommand{\norm}[1]{\left\Vert#1\right\Vert}
\newcommand{\abs}[1]{\left\vert#1\right\vert}
\newcommand{\set}[1]{\left\{#1\right\}}
\newcommand{\R}{\mathbb R}
\newcommand{\eps}{\varepsilon}
\newcommand{\grad} {\nabla}
\newcommand{\lap} {\triangle}
\newcommand{\bdary} {\partial}
\newcommand{\dx} {\; \mathrm{d} x}
\newcommand{\dd} {\; \mathrm{d}}
\newcommand{\dist} {\mathrm{dist}}
\def\XXint#1#2#3{\quad {\setbox0=\hbox{$#1{#2#3}{\int}$}
       \vcenter{\hbox{$#2#3$}}\kern-0.5\wd0}}
\newcommand{\LI}{\mathcal{L}}
\newcommand{\MLp} {\mathrm{M}^+_\mathcal{L}}
\newcommand{\MLm} {\mathrm{M}^-_\mathcal{L}}
\newcommand{\si}{\delta}
\newcommand{\Mp} {\mathrm{M}^+}
\newcommand{\Mm} {\mathrm{M}^-}
\newcommand{\ro}{\rho_0}
\title{Regularity theory for fully nonlinear integro-differential equations}
\author{Luis Caffarelli and Luis Silvestre}
\begin{document}

\maketitle

\begin{abstract}
We consider nonlinear integro-differential equations, like the ones that arise from stochastic control problems with purely jump L\`evy processes. We obtain a nonlocal version of the ABP estimate, Harnack inequality, and interior $C^{1,\alpha}$ regularity for general fully nonlinear integro-differential equations. Our estimates remain uniform as the degree of the equation approaches two, so they can be seen as a natural extension of the regularity theory for elliptic partial differential equations.
\end{abstract}

%\tableofcontents

\section{Introduction}
Integro-differential equations appear naturally when studying discontinuous stochastic processes. The generator of an $n$-dimensional L\`evy process is given by an operator with the general form
\begin{equation} \label{e:fulloperator}
Lu(x) = \sum_{ij} a_{ij} \partial_{ij} u + \sum_i b_i \partial_i u + \int_{\R^n} (u(x+y)- u(x) - \grad u(x) \cdot y) \ \chi_{B_1}(y) \dd \mu(y).
\end{equation}
The first term corresponds to the diffusion, the second to the drift, and the third to the jump part. In this paper we focus on the equations that we obtain when we consider purely jump processes; processes without diffusion or drift part. The operators have the general form
\begin{equation} \label{e:integral}
Lu(x) = \int_{\R^n} (u(x+y) - u(x) - \grad u(x) \cdot y \ \chi_{B_1}(y)) \dd \mu(y). 
\end{equation}
where $\mu$ is a measure such that $\int_{\R^n} \frac{|y|^2}{1+|y|^2} \dd \mu(y) < +\infty$.

The value of $L u(x)$ is well defined as long as $u$ is bounded in $\R^n$ and $C^{1,1}$ at $x$. These concepts will be made more precise later.

The operator $L$ described above is a linear integro-differential operator. In this paper we want to obtain results for nonlinear equations. We obtain this kind of equations in stochastic control problems \cite{So}. If in a stochastic game a player is allowed to choose from different strategies at every step in order to maximize the expected value of some function at the first exit point of a domain, a convex nonlinear equation emerges
\begin{equation} \label{e:mL}
Iu(x) = \sup_\alpha L_\alpha u(x)
\end{equation}

In a competitive game with two or more players, more complicated equations appear. We can obtain equations of the type
\begin{equation} \label{e:mmL}
Iu(x) = \inf_\beta \sup_\alpha L_{\alpha \beta} u(x) 
\end{equation}

The difference between \eqref{e:mmL} and \eqref{e:mL} is convexity. Alternatively, also an operator like $Iu(x) = \sup_\alpha \inf_\beta  L_{\alpha \beta} u(x)$ can be considered. A characteristic property of these operators is that
\begin{equation} \label{e:general}
\inf_{\alpha \beta} L_{\alpha \beta} v(x) \leq I (u+v) (x) - Iu (x) \leq \sup_{\alpha \beta} L_{\alpha \beta} v(x)
\end{equation}

A more general and better description of the nonlinear operators we want to deal with is the operators $I$ for which \eqref{e:general} holds for some family of linear integro-differential operators $L_{\alpha \beta}$. The idea is that an estimate on $I(u+v) - Iu$ by a suitable extremal operator can be a replacement for the concept of ellipticity. Indeed, if we consider the extremal Pucci operators \cite{CC}, $M^+_{\lambda,\Lambda}$ and $M^-_{\lambda,\Lambda}$, and we have $M^-_{\lambda,\Lambda} v(x) \leq  I(u+v) - Iu \leq M^+_{\lambda,\Lambda} v(x)$, then it is easy to see that $I$ must be an elliptic second order differential operator. If instead we compare with suitable nonlocal extremal operators, we will have a concept of ellipticity for nonlocal equations. We will give a precise definition in section \ref{s:maximal} (Definition \ref{d:axiomatic}).

We now explain the natural Dirichlet problem for a nonlocal operator. Let $\Omega$ be an open domain in $\R^n$. We are given a function $g$ defined in $\R^n \setminus \Omega$, which is the boundary condition. We look for a function $u$ such that
\begin{align*}
Iu(x) &= 0 && \text{for every } x \in \Omega \\
u(x) &= g(x) &&\text{for } x \in \R^n \setminus \Omega
\end{align*}

Notice that the boundary condition is given in the whole complement of $\Omega$ and not only $\bdary \Omega$. This is because of the nonlocal character of the operator $I$. From the stochastic point of view, it corresponds to the fact that a discontinuous L\`evy process can exit the domain $\Omega$ for the first time jumping to any point in $\R^n \setminus \Omega$.

In this paper we will focus mainly in the regularity properties of solutions to an equation $Iu=0$. We will briefly present a very general comparison principle from which existence of solutions can be obtained in smooth domains. In order to obtain regularity results, we must assume some \emph{nice} behavior of the measures $\mu$. Basically, our assumption is that they are symmetric, absolutely continuous and not too degenerate. To fix ideas, we can think of integro-differential operators with a kernel comparable with the respective kernel of the fractional laplacian $-(-\lap)^{\sigma/2}$. In this respect, the theory we develop can be understood as a theory of viscosity solutions for fully nonlinear equations of fractional order.

In this paper we would like to quickly present the necessary definitions and then prove some regularity estimates. Our results in this paper are
\begin{itemize}
\item A comparison principle for a general nonlinear integro-differential equation.
\item A nonlocal version of the Alexandroff-Backelman-Pucci estimate.
\item The Harnack inequality for integro-differential equations with kernels that are comparable with the ones of the fractional laplacian but can be very discontinuous.
\item A H\"older regularity result for the same class of equations as the Harnack inequality.
\item A $C^{1,\alpha}$ regularity result for a large class of nonlinear integro-differential equations.
\end{itemize}

Even though there are some known results about Harnack inequalities and H\"older estimates for integro-differential equations with either analytical proofs \cite{S1} or probabilistic proofs \cite{BK}, \cite{BK2}, \cite{BL}, \cite{SV}, the estimates in all these previous results blow up as the order of the equation approaches $2$. In this way, they do not generalize to elliptic differential equations. We provide estimates that remain uniform in the degree and therefore make the theory of integro-differential equations and elliptic differential equations appear somewhat unified. Consequently, our proofs are more involved than the ones in the bibliography.

In this paper we only consider nonlinear operators that are translation invariant. The \emph{variable coefficient} case will be considered in future work. I future papers, we are also planning to address the problem of the interior regularity of the integro-differential Hamilton-Jacobi-Bellman equation. This refers to the equation involving a convex nonlocal operator like \eqref{e:mL}. In that case we obtain an analogue of the Evans-Krylov theorem proving that the solutions to the equation have enough regularity to be classical solutions.

The structure of the paper is as follows. After this introduction, the second section presents the appropriate definitions of subsolution and supersolution of an integro-differential equation in the viscosity sense. In our definition we allow any kind of discontinuities outside of the domain of the equations. In the third section we give the general description of the elliptic nonlocal equations that we want to study. We define a nonlocal elliptic operator by comparing its increments with a suitable maximal operator. This definition is more general than \eqref{e:mmL}. In the fourth section we study the stability of our definitions. A comparison principle is proven in section five under very mild assumptions. Next, in section six we show how to obtain an elliptic partial differential equation as a limit of integro-differential equations. We believe one of the most nontrivial results in the paper is the nonlocal ABP estimate developed in section seven. In sections eight and nine we construct a special function and prove some pointwise estimates that will help in proving the Harnack inequality and H\"older estimates in sections ten and eleven. In section twelve we show the $C^{1,\alpha}$ estimates. And finally in section thirteen we show how to generalize our previous results when our operators have truncated kernels. This last section is important for applications since very often the kernels of an integro differential equation are comparable to the ones of the fractional laplacian only in a neighborhood of the origin.

\section{Definitions}
\label{s:definitions}

As we mention in the introduction, equation \eqref{e:integral} was given in too much generality for our purposes. We will restrict our attention to the operators where $\mu$ is given by a symmetric kernel $K$. It takes the form
\begin{equation}\label{e:linear0}
Lu(x) = \mathrm{PV} \int_{\R^n} (u(x+y) - u(x)) K(y) \dd y \ .
\end{equation}

The kernel $K$ must be a positive function, satisfy $K(y) = K(-y)$, and also
\begin{equation} \label{e:minimumassumptionforlinear}
 \int_{\R^n} \frac{|y|^2}{|y|^2+1} K(y) \dd y < +\infty
\end{equation}
It is not necessary to subtract the term $-\grad u(x) \cdot y \chi_{B_1}$ if we think of the integral in the principal value sense. Alternatively, due to the symmetry of the kernel $K$, the operator can also be written as
\[ Lu(x) = \frac{1}{2} \int_{\R^n} (u(x+y) + u(x-y) - 2u(x)) K(y) \dd y \ . \]

In order to simplify the notation, we will write $\si(u,x,y) := u(x+y) + u(x-y) - 2u(x)$. The expression for $L$ can be written shortly as
\begin{equation} \label{e:linear}
 Lu(x) = \int_{\R^n} \si(u,x,y) K(y) \dd y \ .
\end{equation}
for some kernel $K$ (which would be half of the one of \eqref{e:linear0}). We will alternate from writing the operators in the form \eqref{e:linear0} or \eqref{e:linear} whenever it is convenient.

The nonlinear integro-differential operators that arise in stochastic control have the form \eqref{e:mmL}
%\[ Iu(x) = \inf_\beta \sup_\alpha L_{\alpha \beta} u(x) \]
where we think that for each $L_{\alpha \beta}$ we have a kernel $K_{\alpha \beta}$ so that $L_{\alpha \beta}$ has the form \eqref{e:linear}. We will define a more general form for nonlinear integro-differential operators in section \ref{s:maximal}. 

The minimum assumption in order to have $Iu$ well defined is that every kernel $K_{\alpha \beta}$ must satisfy \eqref{e:minimumassumptionforlinear} in a uniform way. More precisely
\begin{equation} \label{e:minimumassumption}
 \text{if } K(y) := \sup_{\alpha \beta} K_{\alpha \beta} (y) \ \text{  then  } \int_{\R^n} \frac{|y|^2}{|y|^2+1} \ K (y) \ \dd y < +\infty 
\end{equation}

%As it is common for nonlinear equations, we start by giving an appropriate definition of weak solutions. In this case it will be in the viscosity sense. 

The value of $Iu$ can be evaluated in the classical sense if $u \in C^{1,1}$. If we want to evaluate the value of $Iu (x)$ at one point $x$ only, we need $u$ to be punctually $C^{1,1}$ in the sense of the following definition.

\begin{definition}
A function $\varphi$ is said to be $C^{1,1}$ at the point $x$, and we write $u \in C^{1,1}(x)$, if there is a vector $v \in \R^n$ and a number $M>0$ such that 
\[ |\varphi(x+y) - \varphi(x) - v \cdot y| \leq M |y|^2 \qquad \text{for $|y|$ small enough.}\]
\end{definition}

We give a definition of viscosity sub- and super-solutions for integro-differential equations by evaluating the operators in $C^{1,1}$ test functions that \emph{touch} the function $u$ from either above or below. Often for nonlocal equations the definition is given by test functions that remain on one side of $u$ in the whole space $\R^n$. We take a sligtly different approach. We consider a test function $\varphi$ that touches $u$ at a point $x$ and remains on one side of $u$ but it is only defined locally, in a neighborhood $N$ of $x$. Then we complete $\varphi$ with the tail of $u$ to evaluate the integrals \eqref{e:linear}. We do this in order to allow arbitrary discontiuities in the function $u$ outside of the domain $\Omega$ where it may be a solution of the equation.

\begin{definition} \label{d:viscositysolutions}
A function $u :\R^n \to \R$, upper (lower) semi continuous in $\overline \Omega$, is said to be a subsolution (supersolution) to $Iu = f$, and we write $Iu\geq f$ ($Iu \leq f$), if every time all the following happen
\begin{itemize}
\item $x$ is any point in $\Omega$.
\item $N$ is a neighborhood of $x$ in $\Omega$.
\item $\varphi$ is some $C^2$ function in $\overline N$.
\item $\varphi(x) = u(x)$.
\item $\varphi(y) > u(y)$ ($\varphi(y) < u(y)$) for every $y \in N \setminus \{x\}$.
\end{itemize}
Then if we let \[ v := \begin{cases}
               \varphi &\text{in } N \\
	       u &\text{in } \R^n \setminus N \ ,
              \end{cases} \]
we have $Iv(x) \geq f(x)$ ($Iv(x) \leq f(x)$).

A solution is a function $u$ which is both a subsolution and a supersolution.
\end{definition}

Note that Definition \ref{d:viscositysolutions} is essentially the same as Definition 2 in \cite{BI}.

For the set of test functions, we could also use a function $\varphi$ that is $C^{1,1}$ only at the contact point $x$. This is a larger set of test functions, so a priori it may provide a stronger concept of solution. In section \ref{s:stability} we will show that the two approaches are actually equivalent.

Usually the nonlocal operators $I$ allow some growth at infinity. If the value of $Iu(x)$ is well defined every time $u \in C^{1,1}(x)$ and $u \in L^1\left(\R^n, w \right)$ for some weight $w$ that is locally bounded, then the above definition would apply for semicontinuous functions in $\overline \Omega$ that are in $L^1(\R^n, w)$ but not necesarily bounded. In most cases, our regularity results in this paper can be extended to the unbounded case by truncating the function and adding an error term in the right hand side.

\section{Maximal operators}
\label{s:maximal}

In \eqref{e:mL} and \eqref{e:mmL} we consider the supremum or an \emph{inf-sup} of a collection of linear operators. Let us consider a collection of linear operators $\LI$ that includes all of them. The maximal and a minimal operator respect to $\LI$ are defined as:
\begin{align}
\MLp v(x) &= \sup_{L \in \LI} L u(x) \\
\MLm v(x) &= \inf_{L \in \LI} L u(x) .
\end{align}

For example, an important class that we will use for regularity results is given by the class $\LI_0$ of operators $L$ of the form \ref{e:linear} with 
\begin{equation} \label{e:uniformellipticity} 
 (2 - \sigma) \frac{\lambda}{|y|^{n+\sigma}}\leq K(y) \leq (2 - \sigma) \frac{\Lambda}{|y|^{n+\sigma}} \, ,
\end{equation}
then $\Mp_{\LI_0}$ and $\Mm_{\LI_0}$ take a very simple form: 
\begin{align}
\Mp_{\LI_0} v(x) &= (2 - \sigma) \int_{\R^n} \frac{\Lambda \si(v,x,y)^+ - \lambda \si(v,x,y)^-}{|y|^{n+\sigma}} \dd y \label{e:Mp}\\
\Mm_{\LI_0} v(x) &= (2 - \sigma) \int_{\R^n} \frac{\lambda \si(v,x,y)^+ - \Lambda \si(v,x,y)^-}{|y|^{n+\sigma}} \dd y \ . \label{e:Mm} 
\end{align}
We will use these maximal operators to obtain regularity estimates. The factor $(2-\sigma)$ is important when $\sigma \to 2$. We need such factor if we want to obtain second order differential equations as limits of integro-differential equations. In terms of the regularity, we need the factor $(2-\sigma)$ for the estimates not to blow up as $\sigma \to 2$.

%When we write $\Mp$ and $\Mm$ in the rest of the paper, omitting the $\LI$, we denote the operators \eqref{e:Mp} and \eqref{e:Mm}. If $\Lambda=\lambda$, then $\Mp=\Mm$ is a constant multiple of the fractional Laplacian. Otherwise $\Mp$ and $\Mm$ measure a balance between the positive and negative second order incremental quotients in different directions. For other maximal operator we will always explicitly write the class $\LI$.

Another interesting class is given when the kernels have the form
\[ K(y) = (2-\sigma) \frac{ y^t A y}{|y|^{n+2+\sigma}} \ , \]
for symmetric matrices $A$ such that $\lambda I \leq  A \leq \Lambda I$. 
This is a smaller class than the $\LI_0$ above if we choose the respective constants $\lambda$ and $\Lambda$ accordingly, but it is all we need to recover the classical Pucci extremal operators \cite{CC} as $\sigma \to 2$.

Let $K(x)$ be the suppremum of $K_\alpha(x)$ where $K_\alpha$ are all the kernels of all operators $L \in \LI$. As a replacement for \eqref{e:minimumassumption}, for any class $\LI$ we will assume
\begin{equation} \label{e:minimumassumptionforclass}
\int_{\R^n} \frac{|y|^2}{|y|^2+1} \ K (y) \ \dd y < +\infty 
\end{equation}

Using the extremal operators, we give a general definition of ellipticity for nonlocal equations. The following is the kind of operators for which the results in this paper apply.

\begin{definition} \label{d:axiomatic}
Let $\LI$ be a class of linear integro differential operators. We always assume \eqref{e:minimumassumptionforclass}. An elliptic operator $I$ respect to $\LI$ is an operator with the following properties:
\begin{itemize}
\item If $u$ is any bounded function, $Iu(x)$ is well defined every time $u \in C^{1,1}(x)$.
\item If $u$ is $C^2$ in some open set $\Omega$, then $Iu(x)$ is a continuous function in $\Omega$.
\item If $u$ and $v$ are bounded functions $C^{1,1}(x)$, then
\begin{equation} \label{eq:puccicontinuous}
\MLm (u-v)(x) \leq Iu(x) - Iv(x) \leq \MLp(u-v)(x)
\end{equation}
\end{itemize}
\end{definition}

Definition \ref{d:viscositysolutions} applies for the general nonlocal elliptic operators of Definition \ref{d:axiomatic} \emph{mutatis mutandis}.

Definition \ref{d:axiomatic} may apply to operators $I$ whether or not they are translation invariant. However, in this paper we will only focus on the translation invariant case. In other words, for all nonlinear operators $I$ in this paper we assume that $\tau_z I u = I (\tau_z u)$, where $\tau_z$ is the translation operator $\tau_z u(x) := u(x-z)$.

We will show that any operator of the form \eqref{e:mmL} is elliptic with respect to any class that contains all the operators $L_{\alpha \beta}$ as long as condition \eqref{e:minimumassumption} is satisfied (Lemma \ref{l:puccicontinuous} and Lemma \ref{l:c11}). However the Definition \ref{d:axiomatic} allows a richer class of equations. For example we can consider an operator $I$ given by
\[ I u (x) = \int_{\R^n} \frac{G(u(x+y) - u(x))}{|y|^{n+\sigma}} \dd \sigma \]
for any monotone Lipschitz function $G$ such that $G(0)=0$. This operator $I$ would be elliptic with respect to the class $\LI_0$.

\begin{lemma} \label{l:puccicontinuous}
Let $I$ be an operator like in \eqref{e:mmL} and $\LI$ be any collection of integro-differential operators. Assume every $L_{\alpha \beta}$ belongs to the class $\LI$. Then for every $u,v \in C^{1,1}(x)$ we have
\[ \MLm (u-v) (x) \leq Iu(x) - Iu(x) \leq \MLp (u-v)(x) \]
\end{lemma}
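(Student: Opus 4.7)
The plan is to reduce everything to a purely algebraic fact about suprema and infima of families of real numbers, using the linearity of each $L_{\alpha\beta}$. First, since $u,v\in C^{1,1}(x)$, so is $u-v$, and by the uniform condition \eqref{e:minimumassumption} the quantities $L_{\alpha\beta}u(x)$, $L_{\alpha\beta}v(x)$, and $L_{\alpha\beta}(u-v)(x)$ are all well defined; moreover, linearity of each $L_{\alpha\beta}$ gives
\[
L_{\alpha\beta}u(x) - L_{\alpha\beta}v(x) = L_{\alpha\beta}(u-v)(x).
\]

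Next, I would record the two elementary inequalities that, for any bounded families of reals $\{a_\gamma\}$, $\{b_\gamma\}$,
\[
\sup_\gamma a_\gamma - \sup_\gamma b_\gamma \leq \sup_\gamma (a_\gamma - b_\gamma), \qquad \inf_\gamma a_\gamma - \inf_\gamma b_\gamma \leq \sup_\gamma (a_\gamma - b_\gamma).
\]
(The second one follows from the first by an $\eps$-minimizer argument, or by applying the first inequality to $-a_\gamma$, $-b_\gamma$.) Applying these in sequence to $Iu(x)-Iv(x) = \inf_\beta \sup_\alpha L_{\alpha\beta}u(x) - \inf_\beta \sup_\alpha L_{\alpha\beta}v(x)$, I first strip off the outer $\inf_\beta$, then the inner $\sup_\alpha$, arriving at
\[
Iu(x)-Iv(x) \;\leq\; \sup_{\alpha,\beta} \bigl(L_{\alpha\beta}u(x) - L_{\alpha\beta}v(x)\bigr) \;=\; \sup_{\alpha,\beta} L_{\alpha\beta}(u-v)(x).
\]
Since every $L_{\alpha\beta}$ lies in the class $\LI$, the right-hand side is dominated by $\MLp(u-v)(x)$, which is the upper bound we want.

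For the lower bound I would simply interchange the roles of $u$ and $v$ in the argument above to obtain $Iv(x) - Iu(x) \leq \MLp(v-u)(x)$; since every $L\in\LI$ is linear, $\MLp(v-u)(x) = -\inf_{L\in\LI} L(u-v)(x) = -\MLm(u-v)(x)$, which rearranges to the desired inequality $\MLm(u-v)(x)\leq Iu(x)-Iv(x)$.

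There is no real obstacle here; the statement is essentially tautological once linearity of the $L_{\alpha\beta}$ is invoked. The only point that needs care is checking that each appearance of $Lu$, $Lv$, $L(u-v)$ is meaningful, which is handled by the punctual $C^{1,1}$ assumption together with \eqref{e:minimumassumption}; and keeping track of the sign flip that converts $\MLp(-w)$ into $-\MLm(w)$ when producing the lower bound.
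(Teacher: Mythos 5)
Your proposal is correct and follows essentially the same route as the paper: both arguments reduce the claim to the elementary fact that a difference of inf-sups is controlled by the supremum of the differences (the paper phrases this via $\eps$-near-optimizers, first for convex $I$ and then for the inf of convex operators), combined with the linearity $L_{\alpha\beta}u - L_{\alpha\beta}v = L_{\alpha\beta}(u-v)$ and the inclusion of the $L_{\alpha\beta}$ in $\LI$. The only cosmetic difference is that you obtain the lower bound by exchanging $u$ and $v$ and using $\MLp(v-u) = -\MLm(u-v)$, while the paper runs the near-optimizer argument on both sides at once.
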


\begin{proof}
Since $u \in C^{1,1}(x)$, $L_{\alpha \beta} u(x)$ is defined classically for any $L_{\alpha \beta}$. Let's assume first that $I$ is convex. We have
\[ Iu(x) = \sup_\alpha L_\alpha u(x) \ . \]
Thus, for every $\eps>0$, there is an $\alpha_1$ and an $\alpha_2$ such that
\begin{align*}
Iu(x) - L_{\alpha_1} u(x) &< \eps\\
Iv(x) - L_{\alpha_2} v(x) &< \eps.
\end{align*}
Thus we have
\begin{align}
L_{\alpha_2} u(x) - L_{\alpha_2} v(x) - \eps &\leq Iu(x) - Iv(x) \leq L_{\alpha_1} u(x) - L_{\alpha_1} v(x) + \eps \\
\MLm (u-v)(x) - \eps &\leq Iu(x) - Iv(x)\leq \MLp (u-v) (x) + \eps \ .
\end{align}

Since we can take $\eps$ as small as we want, we obtain $\MLm (u-v)(x) \leq Iu(x) - Iv(x)\leq \MLp (u-v) (x)$ for every convex $I$.

For the nonconvex case, we can write
\[ Iu(x) = \inf_\beta \sup_\alpha L_{\alpha \beta} u(x) = \inf_\beta I_\beta u(x), \]
where the $I_\beta$ is the convex operator given by $I_\beta u(x) = \sup_\alpha L_{\alpha \beta} u(x)$. Now a similar idea applies.

For every $\eps>0$, there is an $\beta_1$ and an $\beta_2$ such that
\begin{align*}
Iu(x) - I_{\beta_1} u(x) &< \eps\\
Iv(x) - I_{\beta_2} v(x) &< \eps.
\end{align*}
Thus we have
\begin{align}
I_{\beta_1} u(x) - I_{\beta_1} v(x) - \eps &\leq Iu(x) - Iv(x) \leq I_{\beta_1} u(x) - I_{\beta_1} v(x) + \eps \\
\MLm (u-v)(x) - \eps &\leq Iu(x) - Iv(x)\leq \MLp (u-v) (x) + \eps \ .
\end{align}

Taking $\eps \to 0$, we obtain $\MLm (u-v)(x) \leq Iu(x) - Iv(x)\leq \MLp (u-v) (x)$ for any $I$ of the form \eqref{e:mmL}.
\end{proof}

The family of operators that satisfy the condition \eqref{e:uniformellipticity} have another very curious property. Definition \ref{d:viscositysolutions} is made so that we never have to evaluate the operator $I$ in the original function $u$. Every time we touch $u$ with a smooth function $\varphi$ from above, we construct a test function $v \in C^{1,1}(x)$ to evaluate $I$. It is somewhat surprising that if $I$ is any nonlinear operator $I$ that is an $\inf \sup$ (or a $\sup \inf$) of linear operators that satisfy \eqref{e:uniformellipticity}, then this turn out to be unnecessary, since $I$ can be evaluated classically in $u$ at those points $x$ where $u$ can be touched by above with a paraboloid. This is explained in the next lemma.

\begin{lemma} \label{l:classic2}
Let $I$ be an operator like in \eqref{e:mmL} so that for every $K_{\alpha \beta}$ the equation \eqref{e:uniformellipticity} holds.

If we have a subsolution, $Iu \geq f$ in $\Omega$ and $\varphi$ is a $C^2$ function that touches $u$ from above at a point $x \in \Omega$, then $I u(x)$ is defined in the classical sense and $I u(x) \geq f(x)$.
\end{lemma}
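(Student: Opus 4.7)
The plan is to approximate $u$ by functions that agree with $u$ away from $x$ but are $C^{1,1}$ at $x$, then pass to the limit. By replacing $\varphi$ with $\varphi(y) + \eps|y-x|^2$ I may assume $\varphi$ touches $u$ from above \emph{strictly} in a punctured neighborhood $B_\rho(x)$. For each $r \in (0,\rho)$ I set
\[ v_r(y) := \begin{cases} \varphi(y), & |y-x| < r, \\ u(y), & |y-x| \geq r. \end{cases} \]
Then $v_r \in C^{1,1}(x)$ because it coincides with the $C^2$ function $\varphi$ near $x$. The viscosity subsolution definition applied to the test function $\varphi$ on $N=B_r(x)$ gives $Iv_r(x) \geq f(x)$, and the upper Pucci bound from \eqref{eq:puccicontinuous} (with $I \cdot 0 \equiv 0$ for operators of the form \eqref{e:mmL}) then yields
\[ \Mp_{\LI_0} v_r(x) \;\geq\; Iv_r(x) \;\geq\; f(x). \]

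The crux is to extract from this single scalar inequality enough integrability of $\si(u,x,y)$ to evaluate each $L_{\alpha\beta}$ on $u$ classically. Since $\varphi \in C^2$ and $u \leq \varphi$ with $u(x) = \varphi(x)$, one has $\si(u,x,y) \leq \si(\varphi,x,y) \leq C|y|^2$ for $|y| < \rho$, so $\si(v_r,x,y)^+ \leq C|y|^2$ on $B_\rho$ and $\leq 4\|u\|_\infty$ outside, uniformly in $r$. Inserting the explicit expression \eqref{e:Mp} into the inequality above and rearranging produces the key uniform estimate
\[ (2-\sigma)\lambda \int_{\R^n} \frac{\si(v_r,x,y)^-}{|y|^{n+\sigma}}\,dy \;\leq\; C, \]
with $C$ independent of $r$.

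Since $\si(v_r,x,y)^- = \si(u,x,y)^-$ on $\{|y|>r\}$, letting $r \to 0$ and applying monotone convergence gives $\int \si(u,x,y)^-/|y|^{n+\sigma}\,dy < \infty$. Combined with $\si(u,x,y)^+ \leq C|y|^2$ near $0$ (and $\leq 4\|u\|_\infty$ away from it), and the pointwise bound $K_{\alpha\beta}(y) \leq \Lambda(2-\sigma)/|y|^{n+\sigma}$, this shows $\si(u,x,y)\,K_{\alpha\beta}(y) \in L^1(\R^n)$ for every $L_{\alpha\beta}$, so each $L_{\alpha\beta}u(x)$ is a Lebesgue integral and $Iu(x) = \inf_\beta \sup_\alpha L_{\alpha\beta}u(x)$ makes classical sense.

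To finish, note that $v_r - u$ is supported in $B_r(x)$, so
\[ |L_{\alpha\beta} v_r(x) - L_{\alpha\beta} u(x)| \leq \Lambda(2-\sigma) \int_{|y|<r} \frac{|\si(\varphi,x,y)| + \si(u,x,y)^+ + \si(u,x,y)^-}{|y|^{n+\sigma}}\,dy, \]
and each term on the right tends to $0$ as $r \to 0$ uniformly in $\alpha,\beta$ by absolute continuity of the integral (using the integrability just proved). Passing $\inf_\beta \sup_\alpha$ through the limit gives $Iv_r(x) \to Iu(x)$ and hence $Iu(x) \geq f(x)$. The main obstacle is producing the uniform-in-$r$ bound on the negative part of $\si(v_r)$; it is precisely the two-sided condition \eqref{e:uniformellipticity} that lets the Pucci maximal operator convert the single viscosity inequality on $v_r$ into genuine integrability of $\si(u,x,y)^-$ near the origin, and without both bounds this step cannot be closed — which is why the lemma is special to this class.
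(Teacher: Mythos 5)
Your proof is correct and follows essentially the same strategy as the paper's: define $v_r$ by gluing $\varphi$ to $u$ on $B_r(x)$, use $\Mp v_r(x)\ge Iv_r(x)\ge f(x)$ together with the uniform-in-$r$ bound $\si(v_r,x,y)^+\le C\min(|y|^2,1)$ to get a uniform bound on $\int\si(v_r,x,y)^-|y|^{-n-\sigma}\,dy$, then invoke monotone convergence to obtain integrability of $\si(u,x,y)^-|y|^{-n-\sigma}$ and pass to the limit. The only (cosmetic) differences are that the paper obtains the uniform bound by fixing an $r_0$ and comparing to $\si(v_{r_0})^+$ rather than bounding by $\si(\varphi)^+$ directly, and concludes via $Iu(x)\ge Iv_r(x)+\MLm(u-v_r)(x)$ from Lemma \ref{l:puccicontinuous} rather than showing $L_{\alpha\beta}v_r(x)\to L_{\alpha\beta}u(x)$ uniformly in $\alpha,\beta$; both routes are equivalent.
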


\begin{proof}
For any $r>0$, we define
\[
 v_r = \begin{cases}
        \varphi & \text{in } B_r \\
        u & \text{in } \R^n \setminus B_r \ ,
       \end{cases}
\]
and we have $\Mp v_r(x) \geq I v_r (x) \geq f(x)$. Thus
\[ (2-\sigma) \int \si(v_r,x,y)^+ \frac{\Lambda}{|y|^{n+\sigma}} - \si(v_r,x,y)^- \frac{\lambda}{|y|^{n+\sigma}} \dd y \geq f(x) \]

Since $\varphi$ touches $u$ from above at $x$, for any $y \in \R^n$, $\si(v_r,x,y) \geq \si(u,x,y)$. Since $v_r \in C^{1,1}(x)$, $|\si(v_r,x,y)|/|y|^{n+\sigma}$ is integrable, and then so is $\si(u,x,y)^+ /|y|^{n+\sigma}$. 

We have
\[ (2-\sigma) \int \si(v_r,x,y)^- \frac{\lambda}{|y|^{n+\sigma}} \dd y \leq (2-\sigma) \int \si(v_r,x,y)^+ \frac{\Lambda}{|y|^{n+\sigma}} \dd y - f(x) \]

Since $\varphi$ touches $u$ from above at $x$, $\si(v_r,x,y)$ will decrease as $r$ decreases. Therefore, for every $r < r_0$
\begin{equation} \label{e:ii22}
(2-\sigma) \int_{\R^n} \si(v_r,x,y)^- \frac{\lambda}{|y|^{n+\sigma}} \dd y \leq (2-\sigma) \int_{\R^n} \si(v_{r_0},x,y)^+ \frac{\Lambda}{|y|^{n+\sigma}} \dd y - f(x) 
\end{equation}

But $\si(v_r,x,y)^-$ is monotone increasing as $r$ decreases, and it converges to $\si(u,x,y)^-$ as $r \to 0$. From monotone convergence theorem
\[\lim_{r \to 0} (2-\sigma) \int_{\R^n} \si(v_r,x,y)^- \frac{\lambda}{|y|^{n+\sigma}} \dd y = (2-\sigma) \int_{\R^n} \si(u,x,y)^- \frac{\lambda}{|y|^{n+\sigma}} \dd y \ .
\]
And from \eqref{e:ii22}, the integrals are uniformly bounded and thus
\[ (2-\sigma) \int_{\R^n} \si(u,x,y)^- \frac{\lambda}{|y|^{n+\sigma}} \dd y \leq (2-\sigma) \int_{\R^n} \si(v_{r_0},x,y)^+ \frac{\Lambda}{|y|^{n+\sigma}} \dd y - f(x) < +\infty \]

Therefore, $\si(u,x,y)/|y|^{n+\sigma}$ is integrable, and $L_{\alpha \beta} u$ is well defined in the classical sense for any $\alpha$ and $\beta$. Thus, $Iu(x)$ is computable in the classical sense. The difference $\si(v_r-u,x,y)/|y|^{n+\sigma}$ is monotone decreasing as $r \searrow 0$, converges to zero, and it is bounded by the integrable function $\si(v_{r_0}-u,x,y)/|y|^{n+\sigma}$. We can pass to the limit in the following expression:
\begin{align}
 \lim_{r \to 0} \Mp (v_r - u) (x) &= \lim_{r \to 0} (2-\sigma) \int \si(v_r - u,x,y)^+ \frac{\Lambda}{|y|^{n+\sigma}} \dd y \\
&= 0
\end{align}

Now we use Lemma \ref{l:puccicontinuous} to conclude
\[
I u(x) \geq I v_r (x) + \Mm (u-v_r) = f(x) - \Mp(v_r - u) \to f(x)
\]

So $I u(x) \geq f(x)$.
\end{proof}

Lemma \ref{l:classic2} is convenient to make proofs involving $\Mp$ and $\Mm$ because it allows to deal with viscosity solutions almost as if they were classical solutions. It is not clear to what other types of nonlinear operators a result like Lemma \ref{l:classic2} would extend.

\section{Stability properties}
\label{s:stability}

In this section we show a few technichal properties of the operators $I$ like \eqref{e:mmL}. First that if $u \in C^{1,1}(\Omega)$ then $Iu$ is continuous in $\Omega$. As it was mentioned in the previous sections, it is necessary to justify that the operators of the form \eqref{e:mmL} satisfy the conditions of Definition \ref{d:axiomatic}. Next, we will show that our notion of viscosity solutions allows to touch with solutions that are only punctually $C^{1,1}$ instead of $C^2$ in a neighborhood of the point. Then we will show the important stability property of Definition \ref{d:viscositysolutions}. Namely we show that if a sequence of subsolutions (or supersolutions) in $\Omega$ converges in a suitable way on any compact set in $\R^n$, then the limit is also a subsolution (or supersolution).

We start with a technichal lemma.

\begin{lemma} \label{l:realanalysis}
Let $f \in L^\infty(\R^n)$ and $g_\alpha$ be a family of functions so that $|g_\alpha(x)| \leq g(x)$ for some $L^1$ function $g$. Then the family $f \ast g_\alpha$ is equicontinuous in every compact set.
\end{lemma}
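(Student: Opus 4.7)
The plan is to reduce the claim to an estimate that no longer depends on $\alpha$. Writing
\[
(f \ast g_\alpha)(x_1) - (f \ast g_\alpha)(x_2) = \int_{\R^n} [f(x_1-y) - f(x_2-y)]\, g_\alpha(y) \, dy,
\]
the pointwise domination $|g_\alpha| \le g$ gives
\[
|(f \ast g_\alpha)(x_1) - (f \ast g_\alpha)(x_2)| \le \int_{\R^n} |f(x_1-y) - f(x_2-y)|\, g(y) \, dy .
\]
Since the right-hand side does not involve $\alpha$, it suffices to show that this integral tends to zero as $|x_1 - x_2| \to 0$, uniformly for $x_1, x_2$ in a given compact set $K$.

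To prove this I would fix $\eps > 0$ and approximate $g$ in $L^1(\R^n)$ by a continuous function $g_1$ with compact support $\supp g_1 \subset B_R$, arranged so that $\|g - g_1\|_{L^1} < \eps/(4\|f\|_\infty)$. The contribution of $g - g_1$ to the integral is bounded by $2\|f\|_\infty \|g - g_1\|_{L^1} < \eps/2$, uniformly in $x_1, x_2$. For the $g_1$ contribution, I would pull out $\|g_1\|_\infty$ and change variables ($u = x_2 - y$, $h = x_1 - x_2$) to obtain
\[
\|g_1\|_\infty \int_{B_R(x_2)} |f(u+h) - f(u)| \, du .
\]

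For $x_1, x_2 \in K$ with $|h| \le 1$ the integration domain is contained in the single fixed compact set $K' := K + B_{R+1}$, on which $f$ belongs to $L^1$. Applying the standard continuity of translation in $L^1$ to the fixed function $f\,\chi_{K'}$ yields $\int_{K'} |f(u+h) - f(u)|\, du \to 0$ as $h \to 0$, at a rate independent of $x_2 \in K$, and this can be made less than $\eps/(2\|g_1\|_\infty)$. Adding the two pieces gives the desired equicontinuity. The only genuine subtlety is the uniformity in $x_2 \in K$ of the $L^1$-translation estimate, which is automatic once the integration has been absorbed into a single fixed compact set; the key structural step is the initial domination by $g$, which allows $\alpha$ to drop out at the very first line.
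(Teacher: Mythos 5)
Your argument is correct, and it takes a genuinely different route from the paper. Your key move is to push the $\alpha$-dependence out in the very first line: since $|g_\alpha|\le g$, the oscillation of $f\ast g_\alpha$ is dominated by the single ($\alpha$-free) quantity $\int |f(x_1-y)-f(x_2-y)|\,g(y)\,dy$, and the lemma reduces to a statement about $f$ and $g$ alone. You then split $g$ (not $f$) into a continuous compactly supported piece $g_1$ plus an $L^1$-small remainder, and for the $g_1$ piece you invoke $L^1$-continuity of translations applied to a compactly cut-off copy of $f$. The paper does the mirror-image decomposition: it splits $f$ into a compactly supported piece and a tail, mollifies the compactly supported piece to obtain a continuous $\tilde f_1$, and controls the error set via Egorov's theorem together with an absolute-continuity estimate for $\int_A g$. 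The two proofs ultimately rest on the same ingredients (density of nice functions and the $L^1$ bound on the $g_\alpha$'s), but yours reads more cleanly because once the domination by $g$ is noticed, the remaining claim is just the classical fact that $f\ast g$ is uniformly continuous on compacta for $f\in L^\infty$, $g\in L^1$; the paper instead keeps the family $g_\alpha$ in play throughout and pays for it with Egorov.

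One small bookkeeping remark: in the final step you write the bound as $\int_{K'}|f(u+h)-f(u)|\,du$ with $K'=K+B_{R+1}$ and then apply translation continuity to $f\chi_{K'}$. As stated this is slightly imprecise, since for $u\in K'$ the point $u+h$ may escape $K'$, so $f(u+h)\ne (f\chi_{K'})(u+h)$ in general. The fix is trivial: keep the integral over $B_R(x_2)$ (rather than enlarging to all of $K'$) and note that for $x_2\in K$, $|h|\le 1$, and $u\in B_R(x_2)$ both $u$ and $u+h$ lie in $K'=K+B_{R+1}$, so $\int_{B_R(x_2)}|f(u+h)-f(u)|\,du=\int_{B_R(x_2)}|F(u+h)-F(u)|\,du\le\|\tau_{-h}F-F\|_{L^1}$ with $F:=f\chi_{K'}\in L^1$. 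This is not a gap in the idea, just a line that should be tightened.
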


\begin{proof}
Let $K$ be a compact set in $\R^n$. Let $\eps>0$. Since $g \in L^1$, we can pick a large $R$ so that $K \subset B_R$ and
\[ \norm{f}_{L^\infty} \left( \int_{\R^n \setminus B_R(x)} g(y) \dd y \right) \leq \eps/8 \]
for any $x \in K$. We write $f = f_1 + f_2$, where $f_1 = f \chi_{B_{2R}}$ and $f_2 = f \chi_{\R^n \setminus B_{2R}}$. From the above inequality, we have $|f_2 \ast g_\alpha| \leq \eps/8$ in $K$.

Since $g \in L^1$, there is a $\delta_0 > 0$ so that
\begin{equation} \label{e:s1}
\int_A g(x) \dd x < \frac{\eps}{16 \norm{f}_{L^\infty}} \qquad \text{ for any set } |A| < \delta_0
\end{equation}

Let $\eta_t$ be a standard mollifier with compact support. We have $f_1 \ast \eta_t \to f_1$ a.e. (in every Lebesgue point of $f_1$). Recall that the support of $f_1$ is in $B_R$. For $t$ large, $f_1 \ast \eta_t=0$ ouside $B_{4R}$. By Egorov's theorem, there is a set $A \subset B_{4R}$ such that
\begin{align}
&|A| < \delta_0 \label{e:s11} \\
&f_1 \ast \eta_t \to f_1 \qquad \text{uniformly in } \R^n \setminus A
\end{align}

In particular, there is a $\tilde f_1 = f_1 \ast \eta_{t_0}$ such that $|f_1 - \tilde f_1| < \frac{\eps} { 8 \norm{g}_{L^1}}$ in $\R^n \setminus A$. We have
\begin{equation}
\norm{(f_1-\tilde f_1) (1-\chi_A) \ast g_\alpha}_{L^\infty} \leq \norm{(f_1-\tilde f_1) (1-\chi_A)}_{L^\infty} \norm{g_\alpha}_{L^1} < \frac{\eps}{8} 
\end{equation}

On the other hand, from \eqref{e:s1} and \eqref{e:s11}, we also get
\begin{equation} \label{e:s2}
\norm{(f_1-\tilde f_1) \chi_A \ast g_\alpha}_{L^\infty} < \frac{\eps}{8}
\end{equation}

Since $\tilde f_1$ is continuous and $\norm{g_\alpha}_{L^1}$ is bounded, the family $\tilde f_1 \ast g_\alpha$ is equicontinuous. There is a $\delta>0$ so that $|\tilde f_1 \ast g_\alpha(x) - \tilde f_1 \ast g_\alpha(y)| < \eps/4$ every time $|x-y| < \delta$. Moreover
\begin{align*}
 |f \ast g_\alpha(x) - f \ast g_\alpha(y)| &\leq |\tilde f_1 \ast g_\alpha(x) - \tilde f_1 \ast g_\alpha(y)| + |(f_1-\tilde f_1) \ast g_\alpha(x) - (f_1-\tilde f_1) \ast g_\alpha(y)| \\
&\qquad + |f_2 \ast g_\alpha(x) - f_2 \ast g_\alpha(y)| \\
&\leq \eps/4 + |(f_1-\tilde f_1)\chi_A \ast g_\alpha(x)| + |(f_1-\tilde f_1)\chi_A \ast g_\alpha(y)| \\
&\phantom{ \leq \eps/4 + } + |(f_1-\tilde f_1)(1-\chi_A) \ast g_\alpha(x)| + |(f_1-\tilde f_1)(1-\chi_A) \ast g_\alpha(y)| \\
&\phantom{ \leq \eps/4 + } + |f_2 \ast g_\alpha(x)| + |f_2 \ast g_\alpha(y)|\\
&\leq \eps
\end{align*}
for any $\alpha$ and every time $|x-y| < \delta$.
\end{proof}

\begin{lemma} \label{l:c11}
Let $I$ be an operator like in \eqref{e:mmL}, assuming only \eqref{e:minimumassumption}. Let $v$ be a bounded function in $\R^n$ and $C^{1,1}$ in some set $\Omega$. Then $Iv$ is continuous in $\Omega$.
\end{lemma}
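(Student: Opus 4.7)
The plan is to show that the family $\{L_{\alpha\beta} v\}_{\alpha,\beta}$ is equicontinuous on every compact subset of $\Omega$. Once a common modulus of continuity is available, it is inherited by $\sup_\alpha L_{\alpha\beta} v$ and then by $\inf_\beta \sup_\alpha L_{\alpha\beta}v = Iv$, so continuity of $Iv$ on $\Omega$ follows automatically. Fix a compact set $K \subset \Omega$ and choose a slightly larger compact $K' \subset \Omega$ together with $r_0 > 0$ such that $K + B_{r_0} \subset K'$; let $M$ denote the uniform $C^{1,1}$ constant of $v$ on $K'$. For any $\rho \in (0, r_0]$ split each operator as $L_{\alpha\beta} v (x) = A_{\alpha\beta,\rho}(x) + B_{\alpha\beta,\rho}(x)$, where $A_{\alpha\beta,\rho}$ is the integral over $\{|y|<\rho\}$ and $B_{\alpha\beta,\rho}$ is the integral over $\{|y|\geq\rho\}$.

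For the near piece the $C^{1,1}$ bound gives $|\si(v,x,y)| \leq 2M|y|^2$ for all $x\in K$ and $|y|<\rho$, so
\[ \sup_{\alpha,\beta,\ x \in K} |A_{\alpha\beta,\rho}(x)| \ \leq \ 2M \int_{|y| < \rho} |y|^2 K(y) \dd y, \qquad K(y) := \sup_{\alpha\beta} K_{\alpha\beta}(y). \]
Assumption \eqref{e:minimumassumption} makes the integrand $L^1$ near the origin, so this vanishes as $\rho \to 0$ and we may fix $\rho$ so that the near piece is uniformly smaller than $\eps/3$ in $\alpha,\beta,x$.

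For the far piece, using the symmetry $K_{\alpha\beta}(y) = K_{\alpha\beta}(-y)$ and the substitution $y \mapsto -y$,
\[ B_{\alpha\beta,\rho}(x) \ = \ 2 (v \ast g_{\alpha\beta})(x) - 2 v(x) \int_{\R^n} g_{\alpha\beta}(y) \dd y, \qquad g_{\alpha\beta} := K_{\alpha\beta} \chi_{\{|y|\geq\rho\}}. \]
Each $g_{\alpha\beta}$ is dominated by $g := K \chi_{\{|y|\geq\rho\}}$, which lies in $L^1(\R^n)$ because
\[ \int g \ \leq \ (1 + \rho^{-2}) \int \frac{|y|^2}{1+|y|^2} K(y) \dd y \ < \ +\infty, \]
again by \eqref{e:minimumassumption}. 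Lemma \ref{l:realanalysis} applied with $f = v \in L^\infty(\R^n)$ then gives equicontinuity of $\{v \ast g_{\alpha\beta}\}_{\alpha,\beta}$ on $K$, while the second term is equicontinuous on $K$ because $v|_K$ is uniformly continuous and $\int g_{\alpha\beta}\leq\|g\|_{L^1}$ is uniformly bounded in $\alpha,\beta$. Combining this with the uniform smallness of $A_{\alpha\beta,\rho}$ yields a modulus of continuity for the full family $\{L_{\alpha\beta} v\}$ on $K$ that is independent of $\alpha,\beta$, completing the argument.

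The main obstacle is precisely the uniformity in $\alpha,\beta$ in the far piece: each individual $B_{\alpha\beta,\rho}$ is evidently continuous from elementary considerations, but pushing continuity through the $\inf\sup$ requires a common modulus that does not depend on the parameters, and that is exactly what Lemma \ref{l:realanalysis} was designed to deliver.
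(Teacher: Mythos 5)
Your argument is correct and follows essentially the same route as the paper: split each $L_{\alpha\beta}v$ into a near part, made uniformly small via the $C^{1,1}$ bound $|\si(v,x,y)|\le 2M|y|^2$ together with integrability of $|y|^2 K(y)$ near the origin, and a far part rewritten as convolutions against $g_{\alpha\beta}=K_{\alpha\beta}\chi_{\{|y|\ge\rho\}}$ dominated by a fixed $L^1$ function, to which Lemma~\ref{l:realanalysis} applies. The only cosmetic difference is that you merge $v\ast g_{\alpha\beta}$ and $v\ast\hat g_{\alpha\beta}$ into a single term using $K_{\alpha\beta}(-y)=K_{\alpha\beta}(y)$, and you spell out that the zeroth-order term $-2v(x)\int g_{\alpha\beta}$ is equicontinuous by uniform continuity of $v$ on compacts and uniform boundedness of $\int g_{\alpha\beta}$, a point the paper leaves implicit.
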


\begin{proof}
We must prove the $L_{\alpha \beta} v$ in \eqref{e:mmL} are equicontinuous. Like in \eqref{e:minimumassumption}, we write $K = \sup_{\alpha \beta} K_{\alpha \beta}$.

Let $\eps>0$ and $x_0 \in \Omega$. Since $v$ is $C^{1,1}$ in $\Omega$, there is a constant $C$ so that
\[ |\si(v,x,y)| < C|y|^2 \qquad \text{if } x \in \Omega \text{ and } |y| < \dist(x, \bdary \Omega) \]

Let $r>0$ such that 
\[ \int_{B_r} C|y|^2 K(y) \dd y < \eps/3 \]

We have
\begin{align*}
 L_{\alpha \beta} v(x) &= \int_{\R^n} \si(v,x,y) K_{\alpha \beta} (y) \dd y \\
&= \int_{B_r} \si(v,x,y) K_{\alpha \beta} (y) \dd y + \int_{\R^n \setminus B_r} \si(v,x,y) K_{\alpha \beta} (y) \dd y \\
&=: w_1(x) + w_2(x)
\end{align*}
where 
\[ |w_1| = \abs{\int_{B_r} \si(v,x,y) K_{\alpha \beta} (y) \dd y} \leq \int_{B_r} C|y|^2 K(y) \dd y < \eps/3 \] 
and
\begin{align*}
w_2 &= \int_{\R^n \setminus B_r} (v(x+y) + v(x-y) - 2v(x)) K_{\alpha \beta} (y) \dd y \\
&= v \ast g_{\alpha \beta} + v \ast \hat g_{\alpha \beta} - 2\left( \int g_{\alpha \beta} \dd y \right) v
\end{align*}
where $g_{\alpha \beta}(y) = \chi_{\R^n \setminus B_r}(y) K_{\alpha \beta}(y)$ and $\hat g_{\alpha \beta}(y) = g_{\alpha \beta}(-y)$. For any $\alpha$ and $\beta$, $g_{\alpha \beta} \leq \chi_{\R^n \setminus B_r} K$, which is in $L^1$.  From Lemma \ref{l:realanalysis}, $w_2$ is equicontinuous. So there is a $\delta>0$ such that
\[ |w_2(x) - w_2(x_0)| < \eps/3 \qquad \text{if } |x-x_0|<\delta \]

Therefore
\[ L_{\alpha \beta} v(x) - L_{\alpha \beta} v(x_0)| \leq |w_1(x)|+|w_1(x_0)|+|w_2(x)-w_2(x_0)| < \eps \]
uniformly in $\alpha$ and $\beta$. Thus $|Iv(x) - Iv(x_0)| < \eps$ every time $|x-x_0|<\delta$.
\end{proof}

When we gave the definition of viscosity solutions in section \ref{s:definitions}, we used $C^2$ test functions. Now we show that it is equivalent to use punctually $C^{1,1}$ functions.

\begin{lemma} \label{l:tc11}
Let $I$ be elliptic respect to some class $\LI$ in the sense of Definition \ref{d:axiomatic}. Let $u : \R^n \to \R$ be an upper semicontinuous function such that $I u \geq 0$ in $\Omega$ in the viscosity sense. Let $\varphi : \R^n \to \R$ be a bounded function, punctually $C^{1,1}$ at a point $x \in \Omega$. Assume $\varphi$ touches $u$ from above at $x$. Then $I\varphi (x)$ is defined in the classical sense and $I \varphi(x) \geq f(x)$.
\end{lemma}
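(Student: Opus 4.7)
\medskip
\noindent\textbf{Proof plan.} The plan is to reduce to the $C^2$ test function case of Definition \ref{d:viscositysolutions}: slip a $C^2$ paraboloid between $u$ and $\varphi$ on a small ball $B_r(x)$, apply the viscosity hypothesis to the composite function (paraboloid inside, $u$ outside) to get an inequality at $x$, and then pay the cost of exchanging this composite for $\varphi$ itself via the extremal operator $\MLm$ from Definition \ref{d:axiomatic}.

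First I would use $\varphi \in C^{1,1}(x)$ to pick $v \in \R^n$ and $M>0$ such that $|\varphi(x+y) - \varphi(x) - v\cdot y| \leq M|y|^2$ for $|y|$ small, and for each $\eps > 0$ introduce the paraboloid
\[ P_\eps(y) := u(x) + v\cdot(y-x) + (M+\eps)|y-x|^2 . \]
Using $\varphi \geq u$ together with the quadratic bound, one checks $P_\eps - \varphi \geq \eps|y-x|^2$ near $x$, so $P_\eps > u$ on $B_r(x) \setminus \{x\}$ for $r=r(\eps)$ sufficiently small. Then
\[ v_\eps(y) := \begin{cases} P_\eps(y) & y \in B_r(x),\\ u(y) & y \in \R^n \setminus B_r(x)\end{cases} \]
is an admissible $C^2$ test function touching $u$ strictly from above at $x$, and the viscosity hypothesis yields $Iv_\eps(x) \geq 0$.

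Next, because $\varphi$ is bounded and $C^{1,1}(x)$, Definition \ref{d:axiomatic} guarantees that $I\varphi(x)$ exists in the classical sense and that
\[ I\varphi(x) \geq Iv_\eps(x) + \MLm(\varphi - v_\eps)(x). \]
Everything reduces to showing $\MLm(\varphi - v_\eps)(x) \to 0$ as $r \to 0$. Split $\R^n = B_r(x) \cup (\R^n \setminus B_r(x))$: on $B_r(x)$, $\varphi - v_\eps = \varphi - P_\eps$ has size at most $(2M+\eps)|y-x|^2$; on $\R^n \setminus B_r(x)$, $\varphi - v_\eps = \varphi - u \geq 0$. Consequently $\si(\varphi - v_\eps, x, y)^-$ lives in $\{|y| < r\}$ with bound $C|y|^2$, while $\si(\varphi - v_\eps, x, y)^+$ contributes non-negatively to every $L\in \LI$. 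With $K := \sup_\alpha K_\alpha$ this yields
\[ L(\varphi - v_\eps)(x) \geq -2(2M+\eps)\int_{B_r} |y|^2 K(y) \dd y \]
for every $L \in \LI$, and the minimum integrability assumption \eqref{e:minimumassumptionforclass} forces the right-hand side to $0$ as $r \to 0$ uniformly in $L$; taking the infimum over $L$ and then $r \to 0$ finishes the proof.

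The delicate point is the tail contribution $|y| \geq r$ to $\MLm(\varphi - v_\eps)(x)$: a priori it need not shrink as $r \to 0$, since $\varphi - u$ has no decay built in. What saves the argument is precisely the touching hypothesis $\varphi \geq u$, which makes the tail integrand non-negative for every operator in the class, so it can only improve the lower bound; the remaining quadratic error on $B_r(x)$ is quashed by \eqref{e:minimumassumptionforclass}.
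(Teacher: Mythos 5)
Your proof is correct and follows the paper's own strategy: fit a paraboloid above $\varphi$ at $x$, glue it with $u$ to build an admissible $C^2$ test function, invoke the viscosity definition, and bound the exchange cost $I\varphi(x)-Iv_\eps(x)$ from below by $\MLm(\varphi-v_\eps)(x)$, which tends to zero thanks to the quadratic bound near $x$, the sign $\varphi\geq u$ in the tail, and the integrability of $|y|^2 K(y)$ near the origin from \eqref{e:minimumassumptionforclass}. The only organizational difference is that the paper routes the estimate through an extra intermediate function ($q$ in $B_r$, $\varphi$ outside), separating the near-origin quadratic error from the far-field positivity into two $\MLm$ applications, whereas you combine both observations in a single estimate of $\MLm(\varphi - v_\eps)(x)$.
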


\begin{proof}
Since $\varphi$ is $C^{1,1}$, the expression \eqref{e:linear} is clearly integrable for every $\alpha$ and $\beta$ and $I\varphi(x)$ is defined classically.

Also because $\varphi$ is $C^{1,1}$, there is a quadratic polynomial $q$ touching $\varphi$ from above at $x$. Let
\[ v_r(x) = \begin{cases}
             q & \text{in } B_r \\
	     u & \text{in } \R^n \setminus B_r \ .
            \end{cases} \]

Since $I u \geq f$ in $\Omega$ in the viscosity sense then $I v_r (x) \geq f(x)$ with $I v_r(x)$ well defined. Moreover let
\[ u_r(x) = \begin{cases}
             q & \text{in } B_r \\
	     \varphi & \text{in } \R^n \setminus B_r \ .
            \end{cases} \]
we have
\begin{align*}
I \varphi(x) &\geq I u_r(x) + \MLm (\varphi-u_r)(x) \geq I u_r(x) && \text{since $\varphi-u_r$ has a minimum at $x$}\\
& \geq I v_r(x) + \MLm (u_r - v_r) (x) \\
& \geq f(x) + \MLm (u_r - v_r) (x) \\
& \geq f(x) + \int_{B_r} \si(q-\varphi,x,y)^- K(y) \dd y && \text{where $K$ is the one from \eqref{e:minimumassumptionforclass}} \\
& \geq f(x) - C \int_{B_r} |y|^2 K(y) \dd y \ .
\end{align*}

Since $|y|^2 K(y)$ is integrable in a neighborhood of the origin, the expression
\[ \int_{B_r} C |y|^2 K(y) \dd y \]
goes to zero as $r \to 0$. Thus, for any $\eps>0$, we can find a small $r$ so that
\[ I \varphi(x) \geq f(x) - \eps \ . \]
Therefore $I \varphi(x) \geq f(x)$.
\end{proof}

One of the most useful properties of viscosity solutions is their stability under uniform limits on compact sets. We will prove a slightly stronger result. We show that the notion of viscosity supersolution is stable with respect to the natural limits for lower semicontinuous functions. This type of limit is well known and usually called $\Gamma$-limit.

\begin{definition}[$\Gamma$-convergence]
A sequence of lower semicontinuous functions $u_k$ $\Gamma$-converges to $u$ in a set $\Omega$ if the two following conditions hold
\begin{itemize}
\item For every sequence $x_k \to x$ in $\Omega$, $\liminf_{k \to \infty} u_k(x_k) \geq u(x)$.
\item For every $x \in \Omega$, there is a sequence $x_k \to x$ in $\Omega$ such that $\limsup_{k \to \infty} u_k(x_k) = u(x)$.
\end{itemize}
\end{definition}

Naturally, a uniformly convergent sequence $u_k$ would also converge in the $\Gamma$ sense. An important property of $\Gamma$-limits is that if $u_k$ $\Gamma$-converges to $u$, and $u$ has a strict local minimum at $x$, then $u_k$ will have a local minimum at $x_k$ for a sequence $x_k \to x$.

\begin{lemma} \label{l:stability}
Let $I$ be elliptic in the sense of Definition \ref{d:axiomatic} and $u_k$ be a sequence of functions that are bounded in $\R^n$ and lower semicontinuous in $\Omega$ such that
\begin{itemize}
 \item $I u_k \leq f_k$ in $\Omega$.
\item $u_k \to u$ in the $\Gamma$ sense in $\Omega$.
\item $u_k \to u$ a.e. in $\R^n$.
\item $f_k \to f$ locally uniformly in $\Omega$.
\end{itemize}

Then $Iu \leq f$ in $\Omega$.
\end{lemma}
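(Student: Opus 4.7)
To verify $I u \leq f$ in the viscosity sense at a point $x_0 \in \Omega$, I fix a $C^2$ function $\varphi$ on a compact neighborhood $\bar N$ with $\bar N \subset \Omega$ such that $\varphi$ touches $u$ strictly from below at $x_0$, i.e.\ $\varphi(x_0) = u(x_0)$ and $\varphi < u$ on $\bar N \setminus \{x_0\}$ (the strict version is standard up to adding $-|y-x_0|^4$). Setting
\[
v(y) := \begin{cases} \varphi(y), & y \in N, \\ u(y), & y \in \R^n \setminus N, \end{cases}
\]
the task reduces to showing $I v(x_0) \leq f(x_0)$.

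The first step is to transfer this test function to the $u_k$. Because $u - \varphi$ is lower semicontinuous on $\bar N$, nonnegative, and vanishes only at $x_0$, combining the two conditions in the definition of $\Gamma$-convergence with compactness of $\bar N$ yields a sequence $x_k \in N$ with $x_k \to x_0$ and $c_k := u_k(x_k) - \varphi(x_k) = \min_{\bar N}(u_k - \varphi) \to 0$: the recovery sequence forces the minimum to be at most $o(1)$ from above, while the liminf condition together with strict positivity of $u - \varphi$ on a fixed annular neighborhood of $\partial N$ prevents the minimizer from escaping to the boundary. Consequently $\varphi + c_k$ touches $u_k$ strictly from below at $x_k$ inside $N$, and the supersolution hypothesis $I u_k \leq f_k$ gives $I v_k(x_k) \leq f_k(x_k)$ for all large $k$, where
\[
v_k(y) := \begin{cases} \varphi(y) + c_k, & y \in N, \\ u_k(y), & y \in \R^n \setminus N. \end{cases}
\]

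Passing to the limit, $f_k(x_k) \to f(x_0)$ by local uniform convergence, so the task reduces to proving $I v_k(x_k) \to I v(x_0)$. I would decompose
\[
I v_k(x_k) - I v(x_0) = \bigl[I v_k(x_k) - I v(x_k)\bigr] + \bigl[I v(x_k) - I v(x_0)\bigr]
\]
and apply the ellipticity inequality \eqref{eq:puccicontinuous} to each bracket, rewriting the second as $I(\tau_{x_0 - x_k} v)(x_0) - I v(x_0)$ by translation invariance. This reduces everything to showing that the four Pucci quantities $\MLp(v_k - v)(x_k)$, $\MLm(v_k - v)(x_k)$, $\MLp(\tau_{x_0 - x_k} v - v)(x_0)$, $\MLm(\tau_{x_0 - x_k} v - v)(x_0)$ tend to $0$. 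The first pair is comparatively painless: $v_k - v \equiv c_k$ on $N$, so $\si(v_k - v, x_k, y) = 0$ for $|y|$ small enough that $x_k \pm y \in N$, bypassing the singularity of $K$ at the origin entirely; the tail contribution is controlled by $\int_{|y|>r_0} (|u_k - u|(x_k + y) + |u_k - u|(x_k - y) + 2|c_k|)\,K(y)\,\dd y$, which vanishes because $K\,\chi_{\R^n \setminus B_{r_0}} \in L^1$ (by \eqref{e:minimumassumptionforclass}), the translates of this kernel depend continuously on $x_k$ in $L^1$, and $u_k \to u$ a.e.\ on $\R^n$ while being uniformly bounded.

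The main obstacle is the second pair: here a genuine small-$|y|$ contribution survives, since $v = \varphi$ is only $C^2$, not constant, on $N$, so $\tau_h v - v$ with $h = x_0 - x_k$ does not vanish near $x_0$. The way to handle it is to use $\varphi \in C^2(\bar N)$ to obtain the uniform domination $|\si(\tau_h v - v, x_0, y)| \leq 2\|D^2 \varphi\|_{L^\infty(N)}|y|^2$ on a fixed small ball around $0$ for all small $h$, with pointwise limit $0$; dominated convergence against the integrable weight $|y|^2/(|y|^2+1)\,K(y)$ from \eqref{e:minimumassumptionforclass} then controls the near-origin part, and the tail is handled exactly as above by $L^1$-continuity of translation of $K\,\chi_{\R^n \setminus B_{r_0}}$ together with the a.e.\ convergence of $v$ at its continuity points. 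The most delicate point throughout is that outside $N$ the function $v$ agrees with the merely lower semicontinuous $u$ while the base point is simultaneously drifting with $k$; the hypothesis $u_k \to u$ a.e.\ on all of $\R^n$ (not merely in $\Omega$) is precisely what makes these dominated-convergence arguments close uniformly in $k$.
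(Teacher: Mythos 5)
Your proposal is correct and follows the same overall strategy as the paper: extract from $\Gamma$-convergence a sequence $x_k \to x_0$ and constants $c_k \to 0$ so that $\varphi + c_k$ touches $u_k$ from below at $x_k$, form the corresponding test functions $v_k$, invoke $I v_k(x_k) \leq f_k(x_k)$, and then pass to the limit via the decomposition $I v_k(x_k) - I v(x_0) = [I v_k(x_k) - I v(x_k)] + [I v(x_k) - I v(x_0)]$. The one place where you diverge is the second bracket. The paper simply observes that $v$ is $C^2$ in $N$, so the second bullet of Definition \ref{d:axiomatic} gives \emph{for free} that $I v$ is continuous in $N$, hence $I v(x_k) \to I v(x_0)$; you instead re-derive this continuity by rewriting the bracket as $I(\tau_{x_0 - x_k} v)(x_0) - I v(x_0)$ via translation invariance and then estimating $\MLp(\tau_h v - v)(x_0)$ and $\MLm(\tau_h v - v)(x_0)$ by splitting near/far and using dominated convergence against $|y|^2 K(y)$ near the origin plus $L^1$-continuity of translation of $K \chi_{\R^n \setminus B_{r_0}}$ for the tail. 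Your version is self-contained and exposes exactly what makes $Iv$ continuous, but it costs you extra work that the definition already provides, and it leans on translation invariance of $I$ where the paper's invocation of the axiom does not (this is harmless within the scope of the paper, which only treats translation-invariant $I$, but it slightly narrows the argument). Your treatment of the first bracket — using that $v_k - v$ is identically $c_k$ in $N$ so that $\si(v_k - v, x_k, y)$ vanishes for small $|y|$, then handling the tail with $K\chi_{\R^n\setminus B_{r_0}} \in L^1$, boundedness, and a.e.\ convergence — matches the paper's dominated-convergence step.
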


\begin{proof}
Let $\varphi$ be a test function from below for $u$ touching at a point $x$ in a neighborhood $N$.

Since $u_k$ $\Gamma$-converges to $u$ in $\Omega$, for large $n$, we can find $x_k$ and $\delta_k$ such that $\varphi+d_k$ touches $u_k$ at $x_k$. Moreover $x_k \to x$ and $d_k \to 0$ as $k \to + \infty$.

Since $I u_k \leq f_k$, if we let
\[ v_k = \begin{cases}
          \varphi + d_k & \text{in } N \\
          u_k & \text{in } \R^n \setminus N \ ,
         \end{cases} \]
we have $I v_k (x_k) \leq f_k(x_k)$.

Let $z \in N$ be such that $\dist(z,\bdary N) > \rho >0$. We have
\begin{align*}
\abs{I v_k (z) - I v(z)} &\leq \max\left( |\MLp (v_k-v)(z)|, |\MLp (v-v_k)(z)| \right) \\
&\leq \sup_{L \in \LI} \abs{ L (v_k - v)(z) }  \\
&\leq \int_{\R^n} |\si(v_k-v,z,y)| K(y) \dd y \\
&\leq \int_{\R^n \setminus B_\rho} |\si(v_k-v,z,y)| K(y) \dd y
\end{align*}

The sequence $v_k$ is bounded and $\si(v_k-v,z,y)$ converges to zero almost everywhere. Since $K \in L^1(\R^n \setminus B_\rho)$, we can use dominated convergence theorem to show that the above expression goes to zero as $k \to +\infty$. Moreover the convergence is uniform in $z$. We obtain $I v_k \to I v$ locally uniformly in $N$.

From Definition \ref{d:axiomatic}, we have that $Iv$ is continuous in $N$. We now compute
\[ |Iv_k(x_k) - I v(x)| \leq |Iv_k(x_k) - Iv(x_k)| + |Iv(x_k) - Iv(x)| \to 0 \ . \]

So $Iv_k(x_k)$ converges to $Iv(x)$, as $k \to +\infty$. Since $x_k \to x$ and $f_k \to f$ locally uniformly, we also have $f_k(x_k) \to f(x)$, which finally implies $Iv(x) \leq f(x)$.
\end{proof}

In the previous lemma we showed the stability of supersolutions under $\Gamma$ limits. Naturally, we also have the corresponding result for subsolutions. In that case we would consider the natural limit in the space of upper semicontinuous functions which is the same as the $\Gamma$-convergence of $-u_k$ to $-u$. As a corollary, we obtain the stability under uniform limits. 

\begin{cor}
Let $I$ be elliptic in the sense of Definition \ref{d:axiomatic} and $u_k$ be a sequence of functions that are bounded in $\R^n$ and continuous in $\Omega$ such that
\begin{itemize}
 \item $I u_k = f_k$ in $\Omega$.
\item $u_k \to u$ locally uniformly in $\Omega$.
\item $u_k \to u$ a.e. in $\R^n$.
\item $f_k \to f$ locally uniformly in $\Omega$.
\end{itemize}

Then $Iu = f$ in $\Omega$.
\end{cor}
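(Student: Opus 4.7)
The plan is to reduce the corollary directly to Lemma \ref{l:stability} and its analogue for subsolutions, which was announced in the paragraph immediately preceding the corollary. The observation that does almost all of the work is that locally uniform convergence of continuous functions implies $\Gamma$-convergence in \emph{both} directions: namely, $u_k \to u$ locally uniformly gives both that $u_k$ $\Gamma$-converges to $u$ (relevant for treating the $u_k$ as supersolutions) and that $-u_k$ $\Gamma$-converges to $-u$ (relevant for treating them as subsolutions). This is immediate from the definition: for any $x_k \to x$ in $\Omega$, $|u_k(x_k) - u(x)| \leq |u_k(x_k) - u(x_k)| + |u(x_k) - u(x)| \to 0$ by uniform convergence on a compact neighborhood of $x$ and continuity of $u$, so $u_k(x_k) \to u(x)$; in particular $\liminf u_k(x_k) \geq u(x)$ and the constant sequence $x_k = x$ realizes the limsup.

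First I would rewrite the equation $I u_k = f_k$ as the pair of inequalities $I u_k \leq f_k$ and $I u_k \geq f_k$ in $\Omega$. For the first inequality, I would apply Lemma \ref{l:stability} directly: the three hypotheses on $u_k$ (being bounded in $\R^n$, lower semicontinuous as continuous functions, converging a.e.\ in $\R^n$, $\Gamma$-converging in $\Omega$) and on $f_k$ (locally uniform convergence) are all present, yielding $I u \leq f$ in $\Omega$. For the second inequality I would apply the analogous statement for subsolutions to $u_k$, equivalently Lemma \ref{l:stability} applied to the sequence $-u_k$, which solves $\widetilde I(-u_k) \leq -f_k$ for the reflected operator $\widetilde I v(x) := -I(-v)(x)$ (also elliptic with respect to the reflected class), and whose $\Gamma$-convergence to $-u$ in $\Omega$ follows from the discussion above. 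Combining the two inequalities gives $Iu = f$ in $\Omega$.

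There is really no hard step here; the corollary is essentially a packaging of Lemma \ref{l:stability}. The only thing that needs care is the remark that $u$ itself is continuous in $\Omega$ (being a locally uniform limit of continuous functions), so both $\liminf$ and $\limsup$ conditions in the $\Gamma$-convergence definition are trivially satisfied by the pointwise limits. The a.e.\ convergence in $\R^n$ is assumed outright as a separate hypothesis, so one does not even need to upgrade the locally uniform convergence in $\Omega$ to anything outside $\Omega$ by hand.
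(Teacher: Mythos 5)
Your proof is correct and follows the same approach the paper intends (the paper states the corollary without an explicit proof, relying on the preceding remark about subsolutions and $\Gamma$-convergence of $-u_k$). One small simplification worth noting: since each $L \in \LI$ is linear, the reflected class $\{v \mapsto -L(-v) : L \in \LI\}$ is literally $\LI$ itself, so $\widetilde I$ is elliptic with respect to the same $\LI$ and Lemma \ref{l:stability} applies to $-u_k$ verbatim.
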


\begin{remark}
\emph{$\Gamma$-convergence} was introduced by De Giorgi in the framework of variational analysis to study convergence of sequences of functionals in Banach spaces. Here we are using the same notion of convergence for functions in $\R^n$. This type of limit usually appears in viscosity solution theory in one form or another, even though the term $\Gamma$-convergence is rarely used.
\end{remark}

\section{Comparison principle}

The comparison principle for viscosity solutions that we present here follows very standard ideas in the subject. It originated from the idea of Jensen \cite{J} of sup and inf-convolutions. The method has been succesfully applied to integro-differential equations already \cite{Sayah}. In \cite{BI} a very general proof was given where the solutions are allowed to have an arbitrary growth at infinity. Our definitions do not quite fit into the previous framework mainly because we consider the general class of operators given by Definition \ref{d:axiomatic} and we allow discontinuities outside of the domain of the equation $\Omega$. However, with small modifications, the same techniques can be adapted to our equations. We sketch the important ideas to prove the comparison principle in this section. There are two things that make the proof simpler than usual and are worth to be pointed out. One is the fact that in this paper we are only considering translation invariant equations. The other is that our operators are purely integro-differential (like \eqref{e:integral} instead of \eqref{e:fulloperator}) and they are well defined each time the functions are punctually $C^{1,1}$, which is very convenient to simplify the proof of Lemma \ref{l:Sclass}.

The result of this section that is important for the regularity theory is Theorem \ref{t:Sclass}, since we are going to apply it in section \ref{s:c1a} to incremental quotients of a solution to an equation.

In order to have a comparison principle for a nonlinear operator $I$, we need to impose a minimal ellipticity condition to our collection of linear operators $\LI$. The following assumption will suffice.
\begin{ass} \label{a:e1}
There is a constant $R_0 \geq 1$ so that for every $R>R_0$, there exists a $\delta>0$ (that could depend on $R$) such that for any operator $L$ in $\LI$, we have that $L \varphi > \delta$ in $B_R$, where $\varphi$ is given by
\[ \varphi(x) = \min( R^3 , |x|^2) \]
\end{ass}

In later sections we will need stronger assumptions to prove further regularity properties of the solutions. But for the comparison principle Assumption \ref{a:e1} is enough. Note that assumption \ref{a:e1} is very mild. It just says that given the particular function $\min( R^3 , |x|^2)$, the value of the operator will be strictly positive in $B_R$, but it does not require any unifom estimate on how that happens. If the operators $L \in \LI$ are scale invariant, it justs means that when we apply them to $\min(1,|x|^2)$ they are strictly positive in some neighborhood of the origin.

\begin{thm} \label{t:comparison}
Let $\LI$ be some class satisfying Assumption \ref{a:e1}. Let $I$ be elliptic respect to $\LI$ in the sense of definition \ref{d:axiomatic}. Let $\Omega$ be a bounded open set, and $u$ and $v$ be two functions such that 
\begin{itemize}
\item $u,v$ are bounded in $\R^n$.
\item $u$ is lower-semicontinuous at every point in $\overline \Omega$.
\item $v$ is upper-semicontinuous at every point in $\overline \Omega$.
\item $Iu \geq f$ and $Iv \leq f$ in $\Omega$.
\item $u \leq v$ in $\R^n \setminus \Omega$.
\end{itemize}
Then $u \leq v$ in $\Omega$.
\end{thm}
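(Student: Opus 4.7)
The plan is to argue by contradiction using the classical Jensen-style doubling of variables, and to exploit the translation invariance of $I$ in order to control the nonlocal tails. Assume $M := \sup_{\overline\Omega}(u - v) > 0$; by the semicontinuity hypotheses and the boundary condition $u \leq v$ on $\R^n\setminus\Omega$, this supremum is attained at some interior $x_0 \in \Omega$. The first step promotes $u$ to a strict subsolution. Choose $R$ with $\overline\Omega \subset B_R$ and set $\varphi(x) = \min(R^3, |x|^2)$, so that Assumption~\ref{a:e1} gives $\MLm\varphi \geq \delta > 0$ on $B_R$. By the ellipticity axiom \eqref{eq:puccicontinuous},
\[
I(u + \eta\varphi)(x) \geq Iu(x) + \eta\,\MLm\varphi(x) \geq f(x) + \eta\delta,
\]
while $v + \eta R^3$ remains a supersolution (since $I$ annihilates constants), and the inequality $u + \eta\varphi \leq v + \eta R^3$ persists on $\R^n\setminus\Omega$ because $\varphi \leq R^3$. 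For $\eta < M/R^3$ the positive supremum of $u + \eta\varphi - v - \eta R^3$ on $\overline\Omega$ is preserved, and after relabeling we may assume $Iu \geq f + \mu$ strictly for some $\mu > 0$.

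The second step is the penalization. Consider $w(x,y) = u(x) - v(y) - \eps^{-1}|x-y|^2$ on $\overline\Omega \times \overline\Omega$, with maximum $M_\eps \geq M$ attained at $(x_\eps, y_\eps)$. Standard estimates yield $(x_\eps, y_\eps) \to (x_0, x_0)$ in the interior of $\Omega \times \Omega$ and $|x_\eps - y_\eps|^2/\eps \to 0$. The quadratic $\psi(x) = u(x_\eps) + \eps^{-1}(|x - y_\eps|^2 - |x_\eps - y_\eps|^2)$ touches $u$ from above at $x_\eps$, and symmetrically $\tilde\psi(y) = v(y_\eps) - \eps^{-1}(|x_\eps - y|^2 - |x_\eps - y_\eps|^2)$ touches $v$ from below at $y_\eps$. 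Defining $V^+$ to equal $\psi$ on $B_r(x_\eps)$ and $u$ elsewhere, and $V^-$ to equal $\tilde\psi$ on $B_r(y_\eps)$ and $v$ elsewhere, Definition~\ref{d:viscositysolutions} gives $IV^+(x_\eps) \geq f(x_\eps) + \mu$ and $IV^-(y_\eps) \leq f(y_\eps)$. Setting $z_\eps = x_\eps - y_\eps$ and $W = V^+ - \tau_{z_\eps} V^-$, translation invariance of $I$ together with \eqref{eq:puccicontinuous} yields
\[
\MLp W(x_\eps) \geq IV^+(x_\eps) - IV^-(y_\eps) \geq \mu + f(x_\eps) - f(y_\eps) \geq \mu/2
\]
for $\eps$ small, using continuity of $f$ at $x_0$.

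The contradiction comes from showing that the left-hand side is in fact $o(1)$. A direct calculation gives $W(x_\eps + \xi) - W(x_\eps) = 2|\xi|^2/\eps$ on $B_r$; the shift-stability $w(x_\eps + \xi, y_\eps + \xi) \leq w(x_\eps, y_\eps)$ forces $W(x_\eps + \xi) \leq W(x_\eps)$ whenever both $x_\eps + \xi$ and $y_\eps + \xi$ lie in $\overline\Omega$, hence throughout $B_\rho \setminus B_r$ for $\rho = \tfrac{1}{2}\dist(x_0, \bdary\Omega)$; and for $|\xi| \geq \rho$ the boundary inequality $u \leq v$ outside $\Omega$ bounds $W(x_\eps + \xi) - W(x_\eps)$ by an expression of the form $v(x_\eps + \xi) - v(y_\eps + \xi)$ or $u(x_\eps + \xi) - u(y_\eps + \xi)$, which tends to zero pointwise a.e.\ as $z_\eps \to 0$ by semicontinuity. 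Then $\eps^{-1}\int_{B_r}|\xi|^2 K(\xi) \dd\xi \to 0$ as $r \to 0$, while the far-tail contribution vanishes via dominated convergence against the integrable majorant \eqref{e:minimumassumptionforclass}, producing $\MLp W(x_\eps) = o(1)$ and contradicting the $\mu/2$ lower bound. The main technical obstacle lies precisely in this tail analysis: unlike in the local case, $\MLp$ integrates $W$ against $K$ over all of $\R^n$, so closing the argument demands the careful interplay of translation invariance (to evaluate both operators at the common base point $x_\eps$), the maximum property of $(x_\eps, y_\eps)$ in the near tail, and the boundary ordering together with kernel decay in the far tail.
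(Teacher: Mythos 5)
Your far-tail step is the one genuine gap. For $|\xi|\geq\rho$ you correctly reduce the increment of $W$ to translation differences such as $v(x_\eps+\xi)-v(y_\eps+\xi)$ or $u(x_\eps+\xi)-u(y_\eps+\xi)$, but you then assert that these ``tend to zero pointwise a.e.\ as $z_\eps\to0$ by semicontinuity'' and close with dominated convergence. Outside $\overline\Omega$ the functions $u,v$ are merely bounded measurable --- the whole framework of Definition \ref{d:viscositysolutions} is built to allow arbitrary discontinuities there --- and semicontinuity on $\overline\Omega$ never controls the difference of values at two \emph{distinct} nearby points anyway (upper semicontinuity gives a one-sided bound at a single limit point, not smallness of $v(a)-v(b)$ for $a,b$ close). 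Pointwise a.e.\ convergence of $v(\cdot+z_\eps)-v(\cdot)$ to $0$ is false for general bounded measurable functions, so the dominated-convergence argument has no a.e.\ convergent integrand to act on, and the contradiction $\mu/2\leq o(1)$ is not established as written. The statement you need is nevertheless true, but it must come from continuity of translations in $L^1_{\mathrm{loc}}$: since $u,v\in L^\infty$ and $K\in L^1(\R^n\setminus B_\rho)$ by \eqref{e:minimumassumptionforclass}, split the tail into $\{|\xi|>S\}$ and $\{K>T\}$ (both of small $K$-mass) and a remaining bounded region where $\int |v(x_\eps+\xi)-v(y_\eps+\xi)|\dd \xi \leq \norm{v(\cdot+z_\eps)-v}_{L^1(B_{S'})}\to0$; the mixed region where exactly one of $x_\eps+\xi$, $y_\eps+\xi$ lies in $\overline\Omega$ is absorbed the same way, or by noting that its measure is at most $|\overline\Omega\setminus(\overline\Omega+z_\eps)|+|(\overline\Omega+z_\eps)\setminus\overline\Omega|\to0$ together with absolute continuity of $\int K$. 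A smaller issue: in your first step $Iu(x)$ is not defined pointwise for a viscosity subsolution, so $I(u+\eta\varphi)\geq f+\eta\delta$ must be checked on test functions (if $\phi$ touches $u+\eta\varphi$ from above, then $\phi-\eta\varphi$ touches $u$, and \eqref{eq:puccicontinuous} is applied to the two completed test functions); this works because $\varphi=|x|^2$ on $\Omega\subset B_R$, but it should be said.

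Apart from this, your skeleton is viable and genuinely different from the paper's. The paper never doubles variables: it proves Theorem \ref{t:Sclass} ($\MLp(u-v)\geq f-g$ in the viscosity sense) via sup- and inf-convolutions --- at a touching point both $u^\eps$ and $-v_\eps$ are punctually $C^{1,1}$, so by Lemma \ref{l:tc11} and \eqref{eq:puccicontinuous} everything is evaluated classically at a single base point and no comparison of exterior tails at shifted points ever occurs --- then passes to the limit by $\Gamma$-stability (Lemma \ref{l:stability}) and concludes with the barrier maximum principle of Lemma \ref{l:maximumprinciple}, which uses Assumption \ref{a:e1} by sliding $M+\eps\left(1-\min(R^3,|x|^2)\right)$ from above; you use the same special function instead to make the subsolution strict. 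Your route, once the tail analysis is repaired, gives a self-contained doubling argument that avoids the convolution and $\Gamma$-limit machinery; the paper's route packages the nonlocal bookkeeping into Theorem \ref{t:Sclass}, which it then reuses (for incremental quotients) in the $C^{1,\alpha}$ estimates of Section \ref{s:c1a}.
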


By $u$ being lower-semicontinuous at every point in $\overline \Omega$, we mean that $u$ is semicontinuous in $\overline \Omega$ with respect to $\R^n$. The same applies for the function $v$.

We will use the usual idea of sup- and inf-convolutions in order to prove comparison. We start by defining these concepts
\begin{definition}
Given an upper semicontinuous function $u$, the sup-convolution approximation $u^\eps$ is given by
\begin{equation}
u^\eps (x) = \sup_y u(x+y) - \frac{|y|^2}{\eps}
\end{equation}
On the other hand, if $u$ is lower semicontinuous, the inf-convolution $u_\eps$ is given by
\begin{equation}
u_\eps (x) = \inf_y u(x+y) + \frac{|y|^2}{\eps}
\end{equation}
\end{definition}

Notice that $u^\eps \geq u$ and $u_\eps \leq u$. Note also that $u^\eps$ is a supremum of translations of $u$ and $u_\eps$ is an infimum of translations of $u$.

The following two propositions are very standard, so we skip their proofs
\begin{prop} \label{p:gammaforinfconv}
If $u$ is bounded and lower-semicontinuous in $\R^n$ then $u_\eps$ $\Gamma$-converges to $u$.

If $u$ is bounded and upper-semicontinuous in $\R^n$ then $-u^\eps$ $\Gamma$-converges to $-u$.
\end{prop}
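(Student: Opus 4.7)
The plan is to verify the two defining conditions of $\Gamma$-convergence directly for the inf-convolution, and then deduce the sup-convolution statement from the identity
\[ -u^\eps(x) = -\sup_y\!\left(u(x+y) - \tfrac{|y|^2}{\eps}\right) = \inf_y\!\left(-u(x+y) + \tfrac{|y|^2}{\eps}\right) = (-u)_\eps(x), \]
which reduces the second claim to the first applied to the lower-semicontinuous function $-u$.

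For the liminf inequality, I would fix a sequence $\eps_k \to 0$ and $x_k \to x$, and for each $k$ pick an approximate minimizer $y_k$ with $u(x_k+y_k) + |y_k|^2/\eps_k \leq u_{\eps_k}(x_k) + 1/k$. Using that $u$ is bounded, say $|u| \leq M$, the test choice $y=0$ gives $u_{\eps_k}(x_k) \leq u(x_k) \leq M$, and combining with $u \geq -M$ yields $|y_k|^2/\eps_k \leq 2M + 1/k$. Hence $|y_k| \to 0$, so $x_k + y_k \to x$. Lower semicontinuity of $u$ then gives $\liminf_k u(x_k + y_k) \geq u(x)$, and since $|y_k|^2/\eps_k \geq 0$ we obtain $\liminf_k u_{\eps_k}(x_k) \geq u(x)$, as required.

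For the second condition I would use the constant recovery sequence $x_k \equiv x$. The choice $y=0$ in the definition of the inf-convolution gives $u_{\eps_k}(x) \leq u(x)$ for every $k$, hence $\limsup_k u_{\eps_k}(x) \leq u(x)$. Combined with the liminf inequality established above (applied to the constant sequence), this yields $\lim_k u_{\eps_k}(x) = u(x)$, which certainly satisfies the limsup requirement. The upper-semicontinuous statement then follows by applying what we just proved to $-u$ and invoking the identity above.

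There is no real obstacle: the only small technical point is keeping the approximate minimizers close to the origin, and this is immediate from the $L^\infty$-bound on $u$ (the penalty term forces $|y_k|^2 \lesssim \eps_k$). No ellipticity hypothesis or structure of $I$ enters — the statement is purely a real-analysis fact about Moreau–Yosida regularizations of semicontinuous functions.
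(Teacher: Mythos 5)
Your proof is correct; the paper explicitly skips this proof as standard, and the argument you give --- approximate minimizers $y_k$ forced to the origin by the $L^\infty$-bound and the penalty $|y_k|^2/\eps_k$, the constant recovery sequence with $u_\eps \leq u$, and the reduction of the sup-convolution case via $-u^\eps = (-u)_\eps$ --- is exactly the standard argument being alluded to. The only cosmetic point you could add is that $u_\eps$ is in fact continuous (semiconcave), so the family fits the paper's definition of $\Gamma$-convergence for lower semicontinuous functions.
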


\begin{prop} \label{p:supconvE}
If $Iu \geq f$ then $I u^\eps \geq f - d_\eps$. And if $Iv \leq f$ then $I v_\eps \leq f + d_\eps$, where $d_\eps \to 0$ as $\eps \to 0$ and depends on the modulus of continuity of $f$.
\end{prop}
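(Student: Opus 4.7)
The plan is to use translation invariance of $I$ together with the representation $u^\eps(x)=\sup_{y} \bigl(u(x+y)-|y|^2/\eps\bigr)$ as a supremum of translates, and then transfer the subsolution property to $u^\eps$ via the ellipticity axiom from Definition \ref{d:axiomatic}. The inf-convolution case is symmetric, so I describe only the sup-convolution case. First I would note that since $u$ is bounded, any $y$ realizing the supremum satisfies $|y|^2/\eps \leq 2\|u\|_{L^\infty}$, so $|y|\leq C\sqrt\eps$. In particular, for $\eps$ small, the supremum for $x\in\Omega'\subset\subset\Omega$ is taken over $y$ with $x+y\in\Omega$, and the sup is attained because $u$ is upper semicontinuous and bounded while $|y|^2/\eps\to+\infty$.

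Next, take a $C^2$ test function $\varphi$ touching $u^\eps$ from above at $x_0\in\Omega'$ in a neighborhood $N$, and form the test function $v=\varphi$ in $N$, $v=u^\eps$ in $\R^n\setminus N$. Let $y_0$ attain the supremum: $u^\eps(x_0)=u(x_0+y_0)-|y_0|^2/\eps$, with $|y_0|\leq C\sqrt\eps$. Set $w(z):=u(z+y_0)-|y_0|^2/\eps$. By translation invariance of $I$ (and the invariance under adding constants, since $\si$ annihilates constants), $w$ is a viscosity subsolution of $Iw\geq f(\cdot+y_0)$. By construction $u^\eps\geq w$ everywhere with equality at $x_0$, so $\varphi$ also touches $w$ from above at $x_0$. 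Letting $\tilde v=\varphi$ in $N$ and $\tilde v=w$ outside, the viscosity inequality for $w$ gives $I\tilde v(x_0)\geq f(x_0+y_0)$.

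Finally, I compare $v$ with $\tilde v$: both are $C^{1,1}$ at $x_0$, and $v-\tilde v\geq 0$ everywhere with equality at $x_0$. This means $\si(v-\tilde v,x_0,y)\geq 0$ for every $y$, so $L(v-\tilde v)(x_0)\geq 0$ for every operator $L\in\LI$ (all have non-negative kernels), whence $\MLm(v-\tilde v)(x_0)\geq 0$. Applying the ellipticity inequality from Definition \ref{d:axiomatic},
\[
Iv(x_0)\geq I\tilde v(x_0)+\MLm(v-\tilde v)(x_0)\geq f(x_0+y_0).
\]
If $\omega_f$ is the modulus of continuity of $f$ on a compact neighborhood of $\overline{\Omega'}$, then $f(x_0+y_0)\geq f(x_0)-\omega_f(C\sqrt\eps)$, so setting $d_\eps:=\omega_f(C\sqrt\eps)$ gives $Iv(x_0)\geq f(x_0)-d_\eps$, with $d_\eps\to 0$ as $\eps\to 0$. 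This verifies the viscosity inequality $Iu^\eps\geq f-d_\eps$.

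The only substantive step is the passage from the pointwise inequality at the attaining translate to the viscosity inequality for $u^\eps$: one must argue that the test function $v$ built from $\varphi$ and $u^\eps$ differs from the test function $\tilde v$ built from $\varphi$ and $w$ by a non-negative function vanishing at the contact point, and then invoke ellipticity to absorb the discrepancy. Nothing else is delicate, but this is where the choice of definition of viscosity solution (using the glued function rather than a global test function) makes the proof clean.
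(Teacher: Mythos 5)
Your proof is correct. The paper deliberately omits the argument (``The following two propositions are very standard, so we skip their proofs''), and your proof is precisely the standard one: exploit that $u^\eps$ is a supremum of translates $\tau_{-y_0}u-|y_0|^2/\eps$, use translation invariance of $I$ and invariance under adding constants (the latter following from $\si$ annihilating constants together with \eqref{eq:puccicontinuous}), compare the glued test function for $u^\eps$ with the glued test function for the optimal translate via $\MLm$, and control the shift $|y_0|\leq C\sqrt\eps$ by the modulus of continuity of $f$. The only things left tacit in your write-up (the continuity of $u^\eps$, needed so it is admissible as a supersolution, and the implicit shrinkage of the domain by $O(\sqrt\eps)$) are standard and harmless in the way the proposition is used.
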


\begin{remark}
Proposition \ref{p:gammaforinfconv} is a straightforward generalization of the fact that $u^\eps \to u$ locally uniformly if $u$ is continuous.
\end{remark}

\begin{lemma} \label{l:uc11}
Let $u : \R^n \to \R$ be a lower semicontinuous function in $\R^n$ such that $I u \leq 0$ in $\Omega$ in the viscosity sense. Let $x$ be a point in $\Omega$ so that $u \in C^{1,1}(x)$. Then $Iu(x)$ is defined in the classical sense and $Iu(x) \leq 0$.
\end{lemma}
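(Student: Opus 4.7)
The idea is to manufacture, from the punctual $C^{1,1}$ information at $x$, a legitimate smooth test function that touches $u$ strictly from below at $x$, apply the viscosity supersolution hypothesis, and then compare with $Iu(x)$ using the ellipticity axiom.

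Since $u\in C^{1,1}(x)$, there exist $p\in\R^n$, $M>0$, and $r_0>0$ with $|u(x+y)-u(x)-p\cdot y|\le M|y|^2$ for $|y|<r_0$. Fix $\eps>0$ and set
\[
q(y) := u(x)+p\cdot(y-x)-(M+\eps)|y-x|^2.
\]
Then $q(x)=u(x)$, and for $y\in B_{r_0}(x)\setminus\{x\}$ the $C^{1,1}$ bound gives
\[
u(y)-q(y) \;\ge\; -M|y-x|^2+(M+\eps)|y-x|^2 \;=\; \eps|y-x|^2 \;>\;0.
\]
Thus $q$ is $C^2$ and touches $u$ strictly from below at $x$ in $B_{r_0}(x)$, so it is admissible in Definition \ref{d:viscositysolutions}.

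For $r\in(0,r_0)$ define the pasted function
\[
v_r(y) := \begin{cases} q(y) & y\in B_r(x),\\ u(y) & y\in\R^n\setminus B_r(x). \end{cases}
\]
By the viscosity hypothesis $Iv_r(x)\le 0$, with $Iv_r(x)$ classically defined because $v_r$ is bounded and $C^2$ near $x$. Since $u$ is bounded and $u\in C^{1,1}(x)$, the first axiom in Definition \ref{d:axiomatic} already grants that $Iu(x)$ is defined in the classical sense. Applying the ellipticity bound \eqref{eq:puccicontinuous} to the pair $(u,v_r)$,
\[
Iu(x) \;\le\; Iv_r(x)+\MLp(u-v_r)(x) \;\le\; \MLp(u-v_r)(x),
\]
so it suffices to show that $\MLp(u-v_r)(x)\to 0$ as $r\to 0$.

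Set $w_r := u-v_r$. Then $w_r\equiv 0$ on $\R^n\setminus B_r(x)$, and on $B_r(x)$ the construction of $q$ together with the $C^{1,1}$ bound on $u$ give
\[
0 \;\le\; w_r(y) \;\le\; (2M+\eps)|y-x|^2.
\]
Consequently $\si(w_r,x,y)=w_r(x+y)+w_r(x-y)$ vanishes for $|y|\ge r$ and satisfies $0\le \si(w_r,x,y)\le 2(2M+\eps)|y|^2$ for $|y|<r$. Letting $K$ denote the majorant kernel of the class $\LI$ from \eqref{e:minimumassumptionforclass}, each $L\in\LI$ obeys $Lw_r(x)\le 2(2M+\eps)\int_{B_r}|y|^2K(y)\dd y$, whence
\[
\MLp(w_r)(x) \;\le\; 2(2M+\eps)\int_{B_r}|y|^2 K(y)\dd y \;\longrightarrow\; 0 \quad \text{as } r\to 0,
\]
by absolute continuity of the integral, since $|y|^2K(y)$ is integrable near the origin by \eqref{e:minimumassumptionforclass}. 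Passing to the limit gives $Iu(x)\le 0$.

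The proof is essentially a bookkeeping argument; the only place requiring care is the construction of a test function that touches \emph{strictly} from below (handled by the $\eps$-term in $q$) and the verification that the discrepancy $w_r$ is controlled uniformly in $r$ by a $C|y|^2$ bound, so that the minimal integrability assumption \eqref{e:minimumassumptionforclass} is all that is needed to kill the error.
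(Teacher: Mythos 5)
Your proof is correct and is essentially the paper's own argument made explicit: the paper proves Lemma \ref{l:uc11} by invoking Lemma \ref{l:tc11} with $u$ as its own test function, and your argument reproduces exactly the proof of Lemma \ref{l:tc11} specialized to $\varphi=u$ (construct a paraboloid touching strictly from below, paste on $B_r$, apply the viscosity inequality, and bound $\MLp(u-v_r)(x)$ by $C\int_{B_r}|y|^2K(y)\dd y\to 0$ using \eqref{e:minimumassumptionforclass} and \eqref{eq:puccicontinuous}). The only thing worth flagging is that you correctly read the implicit boundedness assumption on $u$ into the lemma, which the paper also uses silently when it treats $u$ as an admissible test function in Lemma \ref{l:tc11}.
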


\begin{proof}
Use $u$ as a test function for itself with Lemma \ref{l:tc11}.
\end{proof}

\begin{lemma} \label{l:Sclass}
Let $I$ be elliptic in the sense of Definition \ref{d:axiomatic}. Let $u$ and $v$ be two bounded functions such that 
\begin{itemize}
\item $u$ is upper-semicontinuous and $v$ is lower-semicontinuous in $\R^n$.
\item $Iu \geq f$ and $Iv \leq g$ in the viscosity sense in $\Omega$. 
\end{itemize}
Then $\MLp(u-v) \geq f-g$ in $\Omega$ in the viscosity sense.
\end{lemma}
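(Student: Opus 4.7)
The plan is to regularize $u$ and $v$ by sup- and inf-convolutions and reduce the desired viscosity inequality to a classical pointwise one. Let $u^\eps$ and $v_\eps$ be the sup- and inf-convolutions from Proposition \ref{p:supconvE}, so that $I u^\eps \geq f - d_\eps$ and $I v_\eps \leq g + d_\eps$ in the viscosity sense in $\Omega$, with $d_\eps \to 0$. Recall the standard fact that $u^\eps + |x|^2/\eps$ is convex and $v_\eps - |x|^2/\eps$ is concave, which provides a quadratic lower bound on $u^\eps$ and a quadratic upper bound on $v_\eps$ at every point.

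The core step is to show $\MLp(u^\eps - v_\eps) \geq f - g - 2 d_\eps$ in the viscosity sense in $\Omega$. Let $\varphi$ be a $C^2$ test function touching $u^\eps - v_\eps$ from above at some $x_0 \in \Omega$ in a neighborhood $N$. The Taylor expansion of $\varphi$ combined with the semiconcavity bound on $v_\eps$ yields a quadratic upper bound for $u^\eps$ at $x_0$; together with the semiconvexity lower bound this shows $u^\eps$ is punctually $C^{1,1}$ at $x_0$. Symmetrically, $v_\eps \in C^{1,1}(x_0)$. Using $u^\eps$ as its own test function from above, Lemma \ref{l:tc11} gives $I u^\eps(x_0) \geq f(x_0) - d_\eps$ in the classical sense, and Lemma \ref{l:uc11} gives $I v_\eps(x_0) \leq g(x_0) + d_\eps$ classically. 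The ellipticity axiom (Definition \ref{d:axiomatic}) then implies
\[ \MLp(u^\eps - v_\eps)(x_0) \geq I u^\eps(x_0) - I v_\eps(x_0) \geq f(x_0) - g(x_0) - 2 d_\eps. \]
If $w$ denotes the extension of $\varphi$ by $u^\eps - v_\eps$ outside $N$, then $w \geq u^\eps - v_\eps$ everywhere with equality at $x_0$, so $\si(w, x_0, \cdot) \geq \si(u^\eps - v_\eps, x_0, \cdot)$ and the positivity of the kernels yields $\MLp w(x_0) \geq \MLp(u^\eps - v_\eps)(x_0) \geq f(x_0) - g(x_0) - 2 d_\eps$, which is the desired viscosity inequality for $u^\eps - v_\eps$.

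To conclude, send $\eps \to 0$ using the subsolution analog of Lemma \ref{l:stability}. The required $\Gamma$-convergence of $v_\eps - u^\eps$ to $v - u$ in $\Omega$ follows from Proposition \ref{p:gammaforinfconv} by adding the two one-sided $\liminf$ inequalities; the recovery sequence is immediate from $u^\eps(x) \to u(x)$ and $v_\eps(x) \to v(x)$ pointwise for bounded USC and LSC functions, so one can simply take $x_k \equiv x$. Pointwise convergence a.e. in $\R^n$ follows for the same reason. Since $d_\eps \to 0$, the limit gives $\MLp(u - v) \geq f - g$ in the viscosity sense in $\Omega$.

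The main obstacle is the simultaneous punctual $C^{1,1}$ regularity of $u^\eps$ and $v_\eps$ at the contact point $x_0$: although $\varphi$ only constrains the difference from above, the semiconvexity/semiconcavity of the individual convolutions decouple the estimate into one-sided quadratic bounds on each function, producing a true two-sided $C^{1,1}$ sandwich for both. Once this observation is in place, the rest is routine bookkeeping with Definition \ref{d:axiomatic} and the stability result.
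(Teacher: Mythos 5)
Your proof is correct and follows essentially the same route as the paper: sup/inf-convolution to reduce to semiconvex/semiconcave functions, observe that touching the difference from above with a $C^2$ function forces each of $u^\eps$, $v_\eps$ to be punctually $C^{1,1}$ at the contact point, invoke Lemma \ref{l:tc11} (and its supersolution mirror \ref{l:uc11}) to evaluate $I$ classically, apply the ellipticity axiom \eqref{eq:puccicontinuous}, and pass to the limit via $\Gamma$-stability. You have filled in some of the steps the paper leaves implicit (the explicit extension $w$ of the test function and the $\Gamma$-convergence of $v_\eps - u^\eps$), but the underlying argument is the same.
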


\begin{proof}
By Proposition \ref{p:supconvE}, we have that also $I u^\eps \geq f-d_\eps$ and $I v_\eps \leq g+d_\eps$. Moreover $-u^\eps \to -u$ and $v_\eps \to v$ in the $\Gamma$ sense. By the stability of viscosity solutions under $\Gamma$ limits and since $d_\eps \to 0$, it is enough to show that $\MLp (u^\eps - v_\eps) \geq f - g - 2 d_\eps$ in $\Omega$ for every $\eps > 0$.

Let $\varphi$ be a $C^2$ function touching $(u^\eps - v_\eps)$ by above at the point $x$. Note that for any $\eps>0$, both functions $u^\eps$ and $- v_\eps$ are semiconvex, which means that for each of them there is a paraboloid touching it from below at every point $x$. If a $C^2$ function touches $(u^\eps - v_\eps)$ by above at the point $x$, then both $u^\eps$ and $-v_\eps$ must be $C^{1,1}(x)$. But by Lemma \ref{l:tc11} and Lemma \ref{l:puccicontinuous}, this means that we can evaluate $Iu^\eps(x)$ and $Iv_\eps(x)$ in the classical sense and
\[ \MLp (u^\eps - v_\eps)(x) \geq I u^\eps (x) - I v_\eps (x) \geq f - g - 2 d_\eps \]
which clearly implies that also $\MLp \varphi(x) \geq f - g - 2 d_\eps$ since $\varphi$ touches $u^\eps - v_\eps$ by above. Thus $\MLp (u^\eps - v_\eps) \geq f-g - 2d_\eps$ in $\Omega$ in the viscosity sense.

Taking $\eps \to 0$ and using Lemma \ref{l:stability} we finish the proof.
\end{proof}

The result of Lemma \ref{l:Sclass} is almost the result we need to prove the comparison principle, except that we want to allow functions $u$ and $v$ that are discontinuous outside of the domain $\Omega$. We fix this last detail in the following theorem.

\begin{thm} \label{t:Sclass}
Let $I$ be elliptic in the sense of Definition \ref{d:axiomatic}. Let $u$ and $v$ be two bounded functions in $\R^n$ such that 
\begin{itemize}
\item $u$ is upper-semicontinuous and $v$ is lower-semicontinuous in $\overline \Omega$ %(where $\Omega_\delta = \{ x : \dist(x,\Omega)<\delta \} \supset \Omega$ )
\item $Iu \geq f$ and $Iv \leq g$ in the viscosity sense in $\Omega$. 
\end{itemize}
Then $\MLp(u-v) \geq f-g$ in $\Omega$ in the viscosity sense.
\end{thm}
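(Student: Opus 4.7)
My plan is to reduce Theorem \ref{t:Sclass} to Lemma \ref{l:Sclass} by regularizing $u$ and $v$ outside $\overline{\Omega}$ via their semicontinuous envelopes. Set $\tilde{u}(x) := \limsup_{y \to x} u(y)$ and $\tilde{v}(x) := \liminf_{y \to x} v(y)$. Both are bounded on $\R^n$, $\tilde{u}$ is upper-semicontinuous and $\tilde{v}$ is lower-semicontinuous everywhere, and by the hypothesis on $u$ and $v$ we have $\tilde{u} = u$ and $\tilde{v} = v$ on $\overline{\Omega}$.

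The first step will be to check that the envelopes still satisfy the viscosity inequalities in $\Omega$. Let $\varphi$ be a $C^2$ test function touching $\tilde{u}$ strictly from above at a point $x \in \Omega$ on a neighborhood $N \subset \Omega$. Since $\tilde{u} = u$ on $\overline{\Omega} \supset N \cup \{x\}$, $\varphi$ also touches $u$ strictly from above, so by the subsolution property $IV(x) \geq f(x)$ where $V := \varphi$ on $N$ and $V := u$ on $\R^n \setminus N$. Writing $\tilde{V} := \varphi$ on $N$ and $\tilde{V} := \tilde{u}$ on $\R^n \setminus N$, one has $\tilde{V} \geq V$ everywhere with equality at $x$. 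For any $L \in \LI$ with positive kernel $K$, the identity $(\tilde V-V)(x)=0$ together with $\tilde V-V\geq 0$ forces
\[
L(\tilde{V} - V)(x) = \tfrac{1}{2}\int \bigl[(\tilde{V} - V)(x+y) + (\tilde{V} - V)(x-y)\bigr] K(y) \, dy \geq 0,
\]
so $\MLm(\tilde{V} - V)(x) \geq 0$ and the ellipticity axiom \eqref{eq:puccicontinuous} yields $I\tilde{V}(x) \geq IV(x) \geq f(x)$. Hence $\tilde{u}$ is a viscosity subsolution of $I\tilde{u} \geq f$ in $\Omega$. The symmetric argument, with $\MLp$ in place of $\MLm$, shows that $\tilde{v}$ is a viscosity supersolution of $I\tilde{v} \leq g$ in $\Omega$.

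The second step is simply to invoke Lemma \ref{l:Sclass} with $\tilde{u}$ playing the role of $u$ and $\tilde{v}$ playing the role of $v$: both are now globally bounded, upper- and lower-semicontinuous on the whole $\R^n$, and satisfy the required sub- and super-solution properties. The lemma gives $\MLp(\tilde{u} - \tilde{v}) \geq f - g$ in $\Omega$ in the viscosity sense. Finally, since $\tilde{u} - \tilde{v} = u - v$ on $\overline{\Omega}$ and the viscosity test at any $x \in \Omega$ is performed with a neighborhood $N \subset \Omega$ where the two functions coincide, any $\varphi$ touching $u-v$ from above at $x$ is also an admissible test for $\tilde u-\tilde v$ at $x$, and the inequality $\MLp(u - v) \geq f - g$ in $\Omega$ in the viscosity sense follows.

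The main obstacle is the first step: verifying that replacing the tails of $u$ and $v$ by their semicontinuous envelopes outside $\overline{\Omega}$ does not destroy the viscosity inequalities. The key observation is that $\MLm$ of a nonnegative function vanishing at the evaluation point is nonnegative, which is where the nonnegativity of the kernels in $\LI$ together with the ellipticity of $I$ does the real work. Once this is in place, the rest is a direct reduction to Lemma \ref{l:Sclass}.
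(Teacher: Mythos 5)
Your first and second steps are correct and clean: the upper/lower semicontinuous envelopes $\tilde u$ and $\tilde v$ do preserve the viscosity inequalities (since $\tilde V\ge V$, equal at $x$, forces $\MLm(\tilde V-V)(x)\ge 0$ by positivity of the kernels, hence $I\tilde V(x)\ge IV(x)$ by \eqref{eq:puccicontinuous}), and Lemma \ref{l:Sclass} then yields $\MLp(\tilde u-\tilde v)\ge f-g$ in $\Omega$. The gap is in the final transfer back to $u-v$. You correctly observe that a test function $\varphi$ touching $u-v$ from above at $x\in\Omega$ on $N\subset\Omega$ also touches $\tilde u-\tilde v$ from above at $x$, but the viscosity inequality is evaluated on the \emph{glued} function, whose tail outside $N$ differs between the two cases. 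Writing $V:=\varphi$ on $N$, $V:=u-v$ on $\R^n\setminus N$, and $\tilde V:=\varphi$ on $N$, $\tilde V:=\tilde u-\tilde v$ on $\R^n\setminus N$, one has $\tilde u\ge u$ and $\tilde v\le v$, so $\tilde u-\tilde v\ge u-v$ and hence $\tilde V\ge V$ with equality on $N$; since $(V-\tilde V)(x)=0$ and $V-\tilde V\le 0$, every $L\in\LI$ gives $LV(x)\le L\tilde V(x)$, so
\[
\MLp V(x)\ \le\ \MLp \tilde V(x).
\]
Lemma \ref{l:Sclass} gives a lower bound on $\MLp \tilde V(x)$, which therefore tells you nothing about $\MLp V(x)$ --- the inequality goes the wrong way. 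In general the tails of $\tilde u-\tilde v$ can exceed those of $u-v$ by a fixed positive amount outside $\overline\Omega$, so this cannot be patched without changing the approximation.

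This is exactly why the paper does not replace $u$ and $v$ by envelopes but instead builds \emph{sequences} $u_k$, $v_k$ that agree with $u$, $v$ on $\overline\Omega$, converge to $u$, $v$ pointwise a.e. on $\R^n\setminus\overline\Omega$ (not monotonically from one side), are globally semicontinuous, and satisfy $Iu_k\ge f_k$, $Iv_k\le g_k$ with $f_k\to f$, $g_k\to g$ locally uniformly. The RHS error is controlled quantitatively by
\[
h_k(x) := -2\int_{\R^n\setminus B_{\dist(x,\bdary\Omega)}(x)}|u_k(x+y)-u(x+y)|\,K(y)\,\dd y \ \To\ 0 \quad\text{locally uniformly in }\Omega,
\]
using dominated convergence, and Lemma \ref{l:stability} then passes to the limit in $k$. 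To repair your argument you would need to approximate the envelope replacement by a sequence that tends to $u-v$ (not stays above it) and explicitly track the resulting defect in the right-hand side, which in essence reproduces the paper's construction.
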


\begin{proof}
First we will show that there exist two sequences $u_k$ and $v_k$, lower and upper semicontinuous respectively, such that
\begin{itemize}
\item $u_k = u$ in $\overline \Omega$ for every $n$.
\item $v_k = v$ in $\overline \Omega$ for every $n$.
\item $u_k \to u$ and $v_k \to v$ a.e. in $\R^n \setminus \overline \Omega$.
\item $I u_k \geq f_k$ and $I v_k \leq g_k$ with $f_k \to f$ and $g_k \to g$ locally uniformly in $\Omega$. 
\end{itemize}

It is clear that we can find two sequences $u_k$ and $v_k$ satisfying the first three items above by doing a standard mollification of $u$ and $v$ away from $\Omega$ and then \emph{filling the gap} in a semicontinuous way. What we will show is that then there are functions $f_k$ and $g_k$ for which the fourth item also holds.

The function $u_k-u$ vanishes in $\Omega$ and thus $\MLm (u_k - u)$ is defined in the classical sense in $\Omega$. Moreover
\[ \Mm_{\LI} (u_k - u) (x) \geq -2 \int_{\R^n \setminus B_{\dist(x,\bdary \Omega)}(x)} |u_k(x+y) - u(x+y)| K(y) \dd y =: h_k(x)\]

Note that $h_k$ is continuous in $\Omega$ and by dominated convergence $h_k \to 0$ locally uniformly in $\Omega$ as $k \to \infty$.

Let $\varphi$ be function touching globally $u_k$ by above at a point $x$, assuming only that $\varphi \in C^{1,1}(x)$. Then also $\varphi + u - u_k \in C^{1,1}(x)$. But $\varphi + u - u_k$ touches $u$ from above at $x$, so by Lemma \ref{l:tc11} $I(\varphi + u - u_k)(x) \geq f(x)$. But now
\[ I\varphi(x) \geq I(\varphi + u - u_k)(x) + \MLm(u - u_k)(x) \geq f(x) + h_k(x) \]
so we prove the fourth item above for $u_k$ by choosing $f_k = f + h_k$. Similarly we prove it for $v_k$.

Now that we have such sequences $u_k$ and $v_k$ we apply Lemma \ref{l:stability} and finish the proof.
\end{proof}

\begin{lemma} \label{l:maximumprinciple}
Let $u$ be a bounded function, upper-semicontinuous at every point in $\overline \Omega$, such that $\MLp u \geq 0$ in the viscosity sense in $\Omega$. Then $\sup_{\Omega} u \leq \sup_{\R^n \setminus \Omega} u$. 
\end{lemma}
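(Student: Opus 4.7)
The plan is a proof by contradiction. Assume $M := \sup_\Omega u > m := \sup_{\R^n \setminus \Omega} u$. Since $u$ is bounded and upper-semicontinuous on the compact set $\overline\Omega$, its supremum there is attained at some point $x_0$; the inclusion $\partial \Omega \subset \R^n \setminus \Omega$ yields $\sup_{\partial \Omega} u \leq m < M$, which forces $x_0 \in \Omega$. Translating if necessary, take $x_0 = 0$.

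The core difficulty is that $\MLp u \geq 0$ alone cannot rule out an interior maximum (constants are subsolutions), so I first upgrade $u$ to a strict subsolution. Pick $R > R_0$ so large that $\overline\Omega \subset B_R$, and let $\varphi(x) = \min(R^3, |x|^2)$. Assumption \ref{a:e1} provides $L\varphi \geq \delta > 0$ classically on $B_R$ for every $L \in \LI$, so $\MLp(-\eta\varphi) = -\eta \MLm\varphi \leq -\eta\delta$ on $\Omega$ for every $\eta > 0$. Applying Theorem \ref{t:Sclass} with $I = \MLp$ to the subsolution $u$ and the classical (hence viscosity) supersolution $-\eta\varphi$ yields
\[ \MLp(u + \eta\varphi) \geq \eta\delta \quad \text{in } \Omega \text{ in the viscosity sense}. \]

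Next, fix $\eta > 0$ small enough that $\eta R^3 < M - m$. Then $\tilde u := u + \eta\varphi$ satisfies $\tilde u(0) = M$ while $\sup_{\R^n \setminus \Omega} \tilde u \leq m + \eta R^3 < M$, so $\tilde u$ attains its global supremum on $\R^n$ at some interior point $\tilde x_0 \in \Omega$. For any $\beta > 0$ the $C^2$ function $\psi(x) := \tilde u(\tilde x_0) + \beta|x - \tilde x_0|^2$ strictly touches $\tilde u$ from above on every sufficiently small ball $N = B_r(\tilde x_0) \subset \Omega$. Setting $\tilde v := \psi$ on $N$ and $\tilde v := \tilde u$ off $N$, the viscosity subsolution property forces $\MLp \tilde v(\tilde x_0) \geq \eta\delta$.

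Finally I bound $\MLp \tilde v(\tilde x_0)$ from above to reach the contradiction. For $|y| < r$ one has $\si(\tilde v, \tilde x_0, y) = 2\beta|y|^2$, and for $|y| \geq r$ one has $\si(\tilde v, \tilde x_0, y) = \si(\tilde u, \tilde x_0, y) \leq 0$ since $\tilde x_0$ is a global maximum of $\tilde u$. Writing $K := \sup_{L \in \LI} K_L$ as in \eqref{e:minimumassumptionforclass}, for every $L \in \LI$
\[ L \tilde v(\tilde x_0) \leq 2\beta \int_{B_r} |y|^2 K(y) \dd y, \]
and this bound passes to the supremum defining $\MLp \tilde v(\tilde x_0)$. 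The integral on the right is finite by \eqref{e:minimumassumptionforclass} and tends to $0$ as $r \to 0$, so choosing $r$ small enough gives $\MLp \tilde v(\tilde x_0) < \eta\delta$, contradicting the bound from the previous paragraph. The main obstacle is the upgrade step: without the strict sign supplied by Assumption \ref{a:e1} (and carried through by Theorem \ref{t:Sclass}), there would be no margin to defeat the small positive part of $\si$ coming from the quadratic test function.
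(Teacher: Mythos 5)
Your proof is correct, and no step is wrong. Both your argument and the paper's hinge on the same key input: Assumption \ref{a:e1} applied to $\varphi(x)=\min(R^3,|x|^2)$ supplies a strict sign $\MLp(-\eta\varphi)\leq -\eta\delta$ on $\Omega$. But the two proofs deploy this sign differently. The paper considers the barrier family $\varphi_M(x)=M+\eps(1-\min(R^3,|x|^2))$, slides it down to the minimal $M_0$ with $\varphi_{M_0}\geq u$, and observes that if the first touching point $x_0$ is interior then $\varphi_{M_0}$ itself is an admissible test function there; the viscosity inequality forces $\MLp\varphi_{M_0}(x_0)\geq 0$, which contradicts $\MLp\varphi_{M_0}\leq -\eps\delta$ in one stroke, with no further machinery. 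You instead push the solution up to $\tilde u=u+\eta\varphi$, invoke Theorem \ref{t:Sclass} (a comparatively heavy tool built on sup/inf-convolutions) to transfer the viscosity subsolution property across the sum, locate the interior global maximum of $\tilde u$, and then run a separate paraboloid-plus-shrinking-ball computation to defeat the strict subsolution inequality. This is sound and there is no circularity (Theorem \ref{t:Sclass} precedes and is independent of this lemma), but it re-derives by hand what the paper gets for free by choosing the barrier as the test function. What your route buys is conceptual clarity about why Assumption \ref{a:e1} is needed at all: the $\eta\delta$ margin is exactly what survives the small positive contribution $2\beta\int_{B_r}|y|^2K(y)\,dy$ of the touching paraboloid. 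What the paper's route buys is brevity and minimal dependencies: a single application of the definition of viscosity subsolution, no appeal to Theorem \ref{t:Sclass}, and no limiting argument in $r$.
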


\begin{proof}
Let us choose $R>R_0$ large enough so that $\Omega \subset B_R$. For any $\eps>0$, let $\varphi_M$ be the function
\[ \varphi_M(x) = M + \eps \left( 1 - \min(R^3,|x|^2) \right) \ . \]

Note that $M \leq \varphi_M(x) \leq M+\eps$ for every $x \in \R^n$. Also, by Assumption \ref{a:e1}, there is a $\delta>0$ such that $\MLp \varphi_M(x) \leq -\eps \delta $ for any $x \in B_R$.

Let $M_0$ be the smallest value of $M$ for which $\varphi_M \geq u$ in $\R^n$. We will show that $M_0 \leq \sup_{\R^n \setminus \Omega} u$. Otherwise, if $M_0 > \sup_{\R^n \setminus \Omega} u$, there must be a point $x_0 \in \Omega$ for which $u(x_0) = \varphi_{M_0} u(x_0)$. But in that case $\varphi_{M_0}$ would touch $u$ by above at $x_0 \in \Omega$ and by the definition of $\MLp u \geq 0$ in the viscosity sense we would have that $\MLp \varphi_{M_0} \geq 0$ arriving to a contradiction. Therefore, for every $x \in \R^n$, we have
\begin{align*}
u(x) &\leq \varphi_{M_0} (x) \\
&\leq M_0 + \eps \\
&\leq \sup_{\R^n \setminus \Omega} u + \eps
\end{align*}

We finish the proof by making $\eps \to 0$.
\end{proof}

\begin{proof}[Proof of Theorem \ref{t:comparison}]
By theorem \ref{t:Sclass}, $\MLp(u-v) \geq 0$ in $\Omega$. Then Lemma \ref{l:maximumprinciple} says that $\sup_{\Omega} (u-v) \leq \sup_{\R^n \setminus \Omega} (u-v)$, which finishes the proof.
\end{proof}

Once we have the comparison principle for semicontinuous sub and supersolutions, existence of the solution of the Dirichlet problem follows using the Perron's method \cite{I} as long as we can construct suitable barriers.

\section{Second order elliptic equations}
\label{s:secondorder}
It is well known that 
\[ \lim_{\sigma \to 2}  \int_{\R^n} \frac{c_n (2-\sigma)}{|y|^{n+\sigma}} \si(u,x,y) \dd y = \lim_{\sigma \to 2} -(-\lap)^{\sigma/2}u(x) = \lap u(x) \] 

With a simple change of variables $z=Ay$, we arrive to the following identity
\begin{equation} \label{e:i1}
 \lim_{\sigma \to 2} \int_{\R^n} \frac{c_n (2-\sigma)}{\det A |A^{-1} z|^{n+\sigma}} \si(u,x,z) \dd z = \sum a_{ij} u_{ij}(x)
\end{equation}
where $\{a_{ij}\}$ are the entries of $AA^t$. 

This means that we can recover any linear second order elliptic operator as a limit of integro-differential ones like \eqref{e:i1}. Moreover let us say we have a fully nonlinear operator of the form $F(D^2 u)$. Let us assume the function $F$ is Lipschitz and monotone in the space of symmetric matrices. Then $F$ can be written as
\[ F(M) = \inf_\alpha \sup_\beta \sum a_{ij}^{\alpha \beta} M_{ij} \]
for some collection of positive matrices $\{a_{ij}^{\alpha \beta}\} = A_{\alpha\beta}A_{\alpha\beta}^t$. Thus any elliptic fully nonlinear operator can be recovered as a limit of integro-differential operators as
\[F(D^2 u) = \lim_{\sigma \to 2} \left( \inf_\alpha \sup_\beta \int \frac{c_n (2-\sigma)}{\det A_{\alpha\beta} |A_{\alpha\beta}^{-1} z|^{n+\sigma}} \si(u,x,z) \dd z \right)\]
as long as the limit commutes with the operations of infimum and supremum. That is going to be the case every time the convergence is uniform in $\alpha$ and $\beta$ which is the case for example if the matrices $A_{\alpha\beta}$ are uniformly elliptic.

Another posibility is to take a family $A_{\alpha \beta}$ so that
\[ F(D^2 u) = \lim_{\sigma \to 2} \left( \inf_\alpha \sup_\beta \int \frac{\si(u,x,A_{\alpha \beta} y)}{|y|^{n+\sigma}} \dd y \right) \ . \]

Note that we can also consider operators of the form \[ I u(x) := (2-\sigma) \int \frac{1}{|y|^{n+\sigma-2}} G \left( \frac{\si(u,x,y)}{|y|^2} , y  \right) \dd y \]
with $G(d,y)$ being an arbitrary function, lipschitz and monotone in $d$, such that $G(0,y)=0$. This suggests an unusual family of second order nonlinear equations: for $P$ a quadratic polynomial
\[ F(D^2 P) = \int_{S^1} G(P(\sigma),\sigma) \dd \sigma \ . \]

\section{A nonlocal ABP estimate.}

\label{s:abp}

The Alexandroff-Backelman-Pucci (ABP) estimate is a key ingredient in the proof of Harnack inequality by Krylov and Sofonov. It is the relation that allows us to pass from an estimate in measure, to a pointwise estimate. In this section we obtain an estimate for integro-differential equations that converges to the ABP estimate as $\sigma$ approaches $2$. In a later section, we will use this nonlocal version of the ABP theorem to prove the Harnack inequality for $\sigma$ close to $2$.

In this and the next few sections we will consider the class $\LI_0$ defined by the condition \ref{e:uniformellipticity}. We write $\Mp$ and $\Mm$ to denote $\Mp_{\LI_0}$ and $\Mm_{\LI_0}$.

Let $u$ be a function that is not positive outside the ball $B_1$. Consider its concave envelope $\Gamma$ in $B_3$ defined as
\[
\Gamma(x) := \begin{cases}
              \min \set{p(x) : \text{for all planes } p>u \text{ in } B_2} & \text{in } B_3 \\
              0 & \text{in } \R^n \setminus B_3
             \end{cases}
\]

\begin{lemma} \label{l:abp2}
Let $u \leq 0$ in $\R^n \setminus B_1$. Let $\Gamma$ be its concave envelope in $B_3$. Assume $\Mp u(x) \geq -f(x)$ in $B_1$. Let $\ro = 1/(8\sqrt{n})$, $r_k = \ro 2^{-\frac{1}{2-\sigma}-k}$ and $R_k(x) =  B_{r_k}(x) \setminus B_{r_{k+1}}(x)$.

There is a constant $C_0$ depending only on $n$, $\lambda$ and $\Lambda$ (but not on $\sigma$) such that for any $x \in \{u = \Gamma\}$ and any $M>0$, there is a $k$ such that
\begin{equation} \label{e:abp1}
|R_k(x) \cap \{u(y) < u(x) + (y-x)\cdot \grad \Gamma(x) - M r_k^2 \}| \leq C_0 \frac{f(x)}{M} |R_k(x)|
\end{equation}
where $\grad \Gamma$ stands for any element of the superdifferential of $\Gamma$ at $x$, which will coincide with its gradient, and also the gradient of $u$, when these functions are differentiable.
\end{lemma}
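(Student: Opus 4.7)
The plan is to argue by contradiction: assume that for every $k \geq 0$,
\[
|A_k| := |R_k(x) \cap \{u(y) < \ell(y) - M r_k^2\}| > C_0 \frac{f(x)}{M} |R_k(x)|,
\]
where $\ell(y) := u(x) + (y-x) \cdot \grad\Gamma(x)$ is the supporting hyperplane of $\Gamma$ at the contact point $x \in \{u=\Gamma\}$, and extract from this a lower bound on the negative-part integral inside $\Mp u(x)$ that contradicts $\Mp u(x) \geq -f(x)$.

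First I would set up the geometry at the contact point. Since $\Gamma$ is concave on $B_3$ with $\Gamma \geq u$ and $\Gamma(x) = u(x)$, one has $u \leq \Gamma \leq \ell$ on $B_2$. Using $\ell(x+z)+\ell(x-z) = 2u(x)$, rewrite
\[
\si(u,x,z) = [u(x+z)-\ell(x+z)] + [u(x-z)-\ell(x-z)].
\]
Both summands are nonpositive whenever $|z| \leq 2-|x|$, so in particular $\si(u,x,z)^+ = 0$ on $B_{\ro}$ (since $\ro \leq 1 \leq 2-|x|$ for contact points in $B_1$) and $\si(u,x,z)^- \geq \ell(x+z)-u(x+z)$ there. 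The contradiction assumption then produces, in each annular shell $R_k(x)-x$, a set of $z$-measure $|A_k|$ on which $\si(u,x,z)^- \geq M r_k^2$. Using $|z| \leq r_k$ on $R_k$ together with $|R_k(x)| = c_n(1-2^{-n}) r_k^n$, one obtains
\[
(2-\sigma) \int_{R_k(x)-x} \frac{\si(u,x,z)^-}{|z|^{n+\sigma}} \dd z \geq C_0 c_n (1-2^{-n}) (2-\sigma) r_k^{2-\sigma} f(x).
\]
Summing over $k$ and substituting $r_k = \ro 2^{-1/(2-\sigma)-k}$, the geometric series evaluates to
\[
\sum_{k=0}^\infty (2-\sigma) r_k^{2-\sigma} = \frac{(2-\sigma) \ro^{2-\sigma}}{2(1 - 2^{-(2-\sigma)})} \geq c(n) > 0
\]
uniformly in $\sigma \in (0,2)$. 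On the other hand, $\Mp u(x) \geq -f(x)$ combined with $\si(u,x,z)^+ \equiv 0$ on $\R^n$ (a consequence of the supporting-plane inequality on $B_3$ together with the sign hypothesis $u \leq 0$ on $\R^n \setminus B_1$ at nontrivial contact points) gives
\[
\lambda (2-\sigma) \int_{B_{\ro}} \frac{\si(u,x,z)^-}{|z|^{n+\sigma}} \dd z \leq f(x).
\]
Chaining the two inequalities forces $\lambda C_0 c_n (1-2^{-n}) c(n) f(x) \leq f(x)$, a contradiction once $C_0$ is chosen larger than $1/(\lambda c_n (1-2^{-n}) c(n))$.

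The main obstacle, and the reason for the unusual scale $r_0 = \ro 2^{-1/(2-\sigma)}$, is keeping $C_0$ independent of $\sigma$. This choice makes $r_0^{2-\sigma} = \ro^{2-\sigma}/2$ stay $\Theta(1)$ in $\sigma$ and the tail factor $(2-\sigma)/(1 - 2^{-(2-\sigma)})$ bounded both as $\sigma \to 2$ (where it tends to $1/\ln 2$) and as $\sigma \to 0$. A secondary technical point is verifying $\si(u,x,z)^+ \equiv 0$ throughout $\R^n$ at contact points, which combines the supporting-plane property of concave $\Gamma$ on $B_3$ with the hypothesis $u \leq 0$ on $\R^n \setminus B_1$.
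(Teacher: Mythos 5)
Your proposal reproduces the paper's argument essentially line for line: argue by contradiction, use that $\si(u,x,\cdot) \leq 0$ at a contact point to reduce $\Mp u(x)$ to $-\lambda(2-\sigma)\int \si^-/|z|^{n+\sigma}\,\dd z$, lower-bound the contribution of each ring $R_k$ using the negated \eqref{e:abp1}, and observe that the factor $(2-\sigma)\sum_k r_k^{2-\sigma}$ is bounded below uniformly in $\sigma \in (0,2)$ thanks to the choice $r_0 = \ro 2^{-1/(2-\sigma)}$, forcing $C_0$ to be bounded. This is the paper's proof, with the same key steps and the same normalization of the ring radii.
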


\begin{proof}
Since $u$ can be touched by a plane from above at $x$, from Lemma \ref{l:classic2}, $\Mp u(x)$ is defined classically and we have
\[ \Mp u(x) = (2-\sigma) \int_{\R^n} \frac{\Lambda \si^+ - \lambda \si^-}{|y|^{n+\sigma}} \dx \ . \]

Recall $\si = \si(u,x,y) := u(x+y) + u(x-y) - 2 u(x)$.

Note that if both $x+y \in B_3$ and $x-y \in B_3$ then $\si(u,x,y) \leq 0$, since $u(x) = \Gamma(x) = p(x)$ for some plane $p$ that remains above $u$ in the whole ball $B_3$. Moreover, if either $x+y \notin B_3$ or $x-y \notin B_3$, then both $x+y$ and $x-y$ are not in $B_1$, so $u(x+y)\leq 0$ and $u(x-y)\leq 0$. Therefore, in any case $\si(u,x,y) \leq 0$. Thus we have
\begin{align}
 -f(x) &\leq \Mp u(x) = (2-\sigma) \int_{\R^n} \frac{- \lambda \si^-}{|y|^{n+\sigma}} \dd y  \\
&\leq (2-\sigma) \int_{B_{r_0}(x)} \frac{- \lambda \si^-}{|y|^{n+\sigma}} \dd y
\end{align}
where $r_0 = \ro 2^{-\frac{1}{2-\sigma}}$.

Splitting the integral in the rings $R_k$ and reorganizing terms we obtain
\[ f(x) \geq (2-\sigma) \lambda \sum_{k=0}^\infty \int_{R_k(x)} \frac{\si^-}{|y|^{n+\sigma}} \dx \]

Let us assume that equation \eqref{e:abp1} does not hold. We will arrive to a contradiction. We can use the oposite of \eqref{e:abp1} to estimate each integral in the terms of the previous equation.
\begin{align}
 f(x) &\geq (2-\sigma) \lambda \sum_{k=0}^\infty \int_{R_k(x)} \frac{\si^-}{|y|^{n+\sigma}} \dx \\
&\geq c (2-\sigma) \sum_{k=0}^\infty M \frac{r_k^2}{r_k^\sigma} C_0 \frac{f(x)}{M}  \\
&\geq c (2-\sigma) \frac{\ro^2}{1-2^{-(2-\sigma)}} C_0 f(x) \\
&\geq c C_0 f(x)
\end{align}
where the last inequality holds because $(2-\sigma) \frac{1}{1-2^{-(2-\sigma)}}$ remains bounded below for $\sigma \in (0,2)$. By choosing $C_0$ large enough, we obtain a contradiction.
\end{proof}

\begin{remark}
 Note that Lemma \ref{l:abp2} implies that if $\Mp u (x) \geq g(x)$ then $u(x) \neq \Gamma(x)$ at every point where $g(x)>0$.
\end{remark}

\begin{remark}
Lemma \ref{l:abp2} would hold for any particular choice of $\rho_0$ (modifying $C_0$ accordingly). The particular choice $\rho_0=1/8\sqrt{n}$ is convenient for the proofs in section 9 later in this paper.
\end{remark}

\begin{lemma}\label{l:abp1}
Let $\Gamma$ be a concave function in $B_r$. Assume that for a small $\eps$ 
\begin{equation}
| \{ y : \Gamma(y) < \Gamma(x) + (y-x) \cdot \grad \Gamma(x) - h \} \cap (B_r \setminus B_{r/2})| \leq \eps |B_r \setminus B_{r/2}|
\end{equation}
then $\Gamma(y) \geq \Gamma(x) + (y-x) \cdot \grad \Gamma(x) - h$ in the whole ball $B_{r/2}$.
\end{lemma}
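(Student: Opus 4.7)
The plan is to argue by contradiction, exploiting the convexity of $\ell - \Gamma$, where $\ell(y) := \Gamma(x) + (y-x) \cdot \grad \Gamma(x)$. Because $\Gamma$ is concave on $B_r$ and $\grad \Gamma(x)$ is a supergradient, $\ell \geq \Gamma$ on $B_r$, and $\ell - \Gamma$ is a nonnegative convex function with $(\ell - \Gamma)(x) = 0$. The ``bad set''
\[ A := \{ y \in B_r : \ell(y) - \Gamma(y) > h \} \]
is open (by continuity of $\ell - \Gamma$ in the interior), while its complement $A^c = B_r \cap \{ \ell - \Gamma \leq h \}$ is convex (a sublevel set of a convex function intersected with a convex set) and contains $x$. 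The hypothesis is $|A \cap (B_r \setminus B_{r/2})| \leq \eps |B_r \setminus B_{r/2}|$, and the goal is $A \cap B_{r/2} = \emptyset$.

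Suppose instead there is $y_0 \in A \cap B_{r/2}$. Since $A$ is open, a small ball $B_\delta(y_0)$ is contained in $A$ and hence disjoint from the convex set $A^c$. I would apply the Hahn-Banach separation theorem to produce a hyperplane $H$ such that $B_\delta(y_0) \subset H^+$ (open half-space) and $A^c \subset H^-$ (closed half-space). Two facts follow: first, $H^+ \cap B_r \subset B_r \setminus A^c = A$, so
\[ |A \cap (B_r \setminus B_{r/2})| \;\geq\; |H^+ \cap (B_r(x) \setminus B_{r/2}(x))|; \]
second, since $x \in A^c \subset H^-$ and $y_0 \in H^+$ lie on opposite sides of $H$, the segment from $x$ to $y_0$ crosses $H$, so $\dist(x, H) \leq |x - y_0| < r/2$.

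The last ingredient is a purely geometric bound: for every hyperplane $H$ with $\dist(x, H) < r/2$,
\[ |H^+ \cap (B_r(x) \setminus B_{r/2}(x))| \;\geq\; c_n \, |B_r \setminus B_{r/2}|, \]
for a dimensional constant $c_n > 0$. By scaling one may take $r = 1$, $x = 0$, and $H = \{y_1 = d\}$ with $d \in [0, 1/2]$, so the quantity $|\{y \in B_1 : y_1 > d\}| - |\{y \in B_{1/2} : y_1 > d\}|$ is a continuous function of $d$, equals $|B_1 \setminus B_{1/2}|/2$ at $d = 0$, and remains strictly positive at $d = 1/2$ (the second term vanishes while the first is a nondegenerate spherical cap of $B_1$). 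Hence it attains a positive minimum $c_n$ on $[0, 1/2]$, and choosing $\eps < c_n$ yields the desired contradiction.

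The main obstacle I anticipate is this last geometric estimate, specifically ensuring the lower bound stays uniform as $\dist(x, H)$ approaches $r/2$, where the cap inside $B_{r/2}$ collapses to a point but the cap inside $B_r$ remains macroscopic; once this uniformity is in hand, the separation argument and the convexity observations are standard.
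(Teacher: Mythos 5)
Your proof is correct, but it takes a genuinely different route from the paper's. The paper argues directly via midpoint concavity: given $y \in B_{r/2}$, it picks two points $y_1, y_2$ with $y = (y_1+y_2)/2$ and $|y_i-x| = \tfrac34 r$, so that the balls $B_{r/4}(y_1)$ and $B_{r/4}(y_2)$ sit inside $B_r \setminus B_{r/2}$ and are reflections of each other through $y$; if $\eps$ is small enough (relative to $|B_{r/4}|/|B_r\setminus B_{r/2}|$) the good set must contain a symmetric pair $z_1, z_2$ in these balls, and $\Gamma(y)\geq \tfrac12(\Gamma(z_1)+\Gamma(z_2))$ finishes. You instead argue by contradiction using the convexity of the good sublevel set $A^c$: separate a hypothetical bad point $y_0 \in B_{r/2}$ from $A^c$ by a hyperplane $H$, observe $H^+\cap B_r \subset A$ and $\dist(x,H)<r/2$, and invoke a geometric lower bound $|H^+ \cap (B_r\setminus B_{r/2})| \geq c_n|B_r\setminus B_{r/2}|$. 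Both give a dimensional threshold for $\eps$. The paper's proof is more elementary (no separation theorem) and more explicit about the constant; yours is conceptually cleaner and makes it transparent that only the convexity of $\{\ell - \Gamma \leq h\}$ is being used. One small tightening is needed in your geometric step: continuity of $g(d) := |\{y_1>d\}\cap(B_1\setminus B_{1/2})|$ together with positivity at the two endpoints $d=0$ and $d=\tfrac12$ does not, on its own, rule out a zero in between. You should note that $g(d)>0$ for \emph{every} $d\in[0,\tfrac12]$ --- for instance because the slab $\{y_1>d\}\cap(B_1\setminus B_{1/2})$ always contains the fixed ball $B_{1/8}(\tfrac78 e_1)$, or because a direct computation shows $g$ is monotone decreasing --- before compactness yields the uniform constant $c_n$.
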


\begin{proof}
Let $y \in B_{r/2}$. There are two points $y_1$, $y_2$ in $B_r \setminus B_{r/2}$ such that
\begin{enumerate}
\item $y = (y_1+y_2)/2$.
\item $|y_1-x|=|y_2-x|=\frac 34 r$.
\end{enumerate}

\begin{figure}
\begin{center}
\input{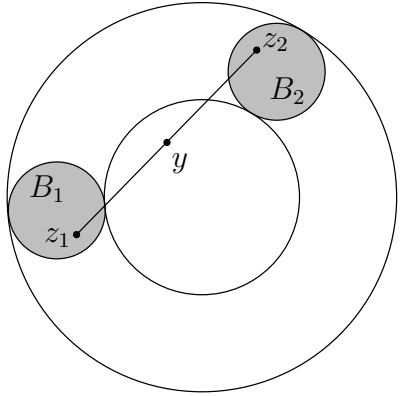}
\end{center}
\caption{The balls $B_1$ and $B_2$.}
\label{f:b1b2}
\end{figure}

Let us consider the balls $B_1 = B_{r/4}(y_1)$ and $B_2 = B_{r/4}(y_2)$ (See Figure \ref{f:b1b2}). They are symmetric respect to $y$ and they are completely contained in $B_r \setminus B_{r/2}$. If $\eps$ is small enough, there will be two points $z_1 \in B_1$ and $z_2 \in B_2$ so that
\begin{enumerate}
\item $y = (z_1+z_2)/2$
\item $\Gamma(z_1) \geq \Gamma(x) + (z_1-x) \cdot \grad \Gamma(x) - h$
\item $\Gamma(z_2) \geq \Gamma(x) + (z_2-x) \cdot \grad \Gamma(x) - h$
\end{enumerate}
and by the concavity of $\Gamma$ we finish the proof since $\Gamma(y) \geq (\Gamma(z_1)+\Gamma(z_2))/2$.
\end{proof}

\begin{cor} \label{c:abp3}
For any $\eps_0>0$ there is a constant $C$ such that for any function $u$ with the same hypothesis as in Lemma \ref{l:abp2}, there is an $r \in (0,\ro 2^{-\frac{1}{2-\sigma}})$ such that:
\begin{align}
&\frac{\abs{\set{y \in B_r \setminus B_{r/2}(x) : u(y) < u(x) + (y-x) \cdot \grad \Gamma(x) - C f(x) r^2}}}{ |B_r(x) \setminus B_{r/2}(x)|} \leq \eps_0 . \label{e:abp31}\\
&|\grad \Gamma (B_{r/4}(x))| \leq C f(x)^n |B_{r/4}(x)| \label{e:abp32}
\end{align}
Recall $\ro = 1/(8\sqrt{n})$.
\end{cor}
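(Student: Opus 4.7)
The plan is to string together the three ingredients already at hand: Lemma \ref{l:abp2} furnishes a single ring $R_k = B_{r_k}(x) \setminus B_{r_k/2}(x)$ on which $u$ stays within $Mr_k^2$ of its supporting plane at $x$ on most of the ring; Lemma \ref{l:abp1} upgrades this measure-theoretic statement into a pointwise lower bound on $\Gamma$ throughout the inner half-ball $B_{r/2}(x)$; and then concavity converts that pointwise lower bound into a diameter estimate for $\grad \Gamma(B_{r/4}(x))$, yielding \eqref{e:abp32}.

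Concretely, I would first apply Lemma \ref{l:abp2} with the choice $M := C_0 f(x)/\eps_1$, where $\eps_1 := \min(\eps_0,\eps_\star)$ and $\eps_\star$ is the smallness threshold needed by Lemma \ref{l:abp1}. This produces some $k$, hence an $r := r_k \in (0,\ro 2^{-1/(2-\sigma)})$, for which \eqref{e:abp31} holds with $C = C_0/\eps_1$. (The degenerate case $f(x)=0$ makes $\Gamma$ affine on every relevant ring and can be set aside.)

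Next, since $\Gamma \geq u$ in $\R^n$ and $\Gamma(x) = u(x)$ (because $x \in \{u=\Gamma\}$, inherited from Lemma \ref{l:abp2}), the event $\{\Gamma(y) < \Gamma(x)+(y-x)\cdot \grad\Gamma(x) - Cf(x)r^2\}$ is contained in the corresponding event for $u$. Hence the hypothesis of Lemma \ref{l:abp1} is satisfied for $\Gamma$ on $B_r(x)$ with $h=Cf(x)r^2$, and we deduce
\[
\Gamma(y) \geq \Gamma(x) + (y-x)\cdot\grad\Gamma(x) - Cf(x)r^2 \qquad \text{for all } y \in B_{r/2}(x).
\]
To pass to the gradient estimate, set $p:=\grad\Gamma(x)$, fix $y\in B_{r/4}(x)$ and $q\in\grad\Gamma(y)$. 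For any unit vector $v$, the point $z:=y+(r/4)v$ lies in $B_{r/2}(x)$, and combining the supporting plane inequality $\Gamma(z)\leq \Gamma(y)+(r/4)v\cdot q$, the concavity inequality $\Gamma(y)\leq \Gamma(x)+(y-x)\cdot p$, and the pointwise lower bound $\Gamma(z)\geq \Gamma(x)+(z-x)\cdot p - Cf(x)r^2$ collapses, after cancellation, to $v\cdot(q-p) \geq -4Cf(x)r$. Taking the supremum over unit vectors $v$ gives $|q-p|\leq 4Cf(x)r$, so $\grad\Gamma(B_{r/4}(x))\subset B_{4Cf(x)r}(p)$, and comparing volumes yields \eqref{e:abp32} after absorbing dimensional constants into a new $C$.

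The only genuine subtlety is making sure Lemma \ref{l:abp1} can be applied to $\Gamma$ rather than to $u$, for which the density estimate was originally proved; the inclusion of bad sets via $\Gamma \geq u$ and $\Gamma(x) = u(x)$ disposes of this. The remainder is bookkeeping: choosing $\eps_1$ small enough to satisfy both $\eps_1 \leq \eps_0$ and the threshold from Lemma \ref{l:abp1}, and tracking how the final constant $C$ depends on $n$, $\lambda$, $\Lambda$, and $\eps_0$.
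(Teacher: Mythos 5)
Your proposal is correct and follows essentially the same route as the paper: apply Lemma \ref{l:abp2} with $M$ proportional to $f(x)/\eps_0$ (with the additional, sensible refinement of also meeting the smallness threshold demanded by Lemma \ref{l:abp1}), pass the bad-set estimate from $u$ to $\Gamma$ via $\Gamma\geq u$ and $\Gamma(x)=u(x)$, invoke Lemma \ref{l:abp1} to get the pointwise lower bound on $B_{r/2}(x)$, and then convert it to the supergradient inclusion $\grad\Gamma(B_{r/4}(x))\subset B_{4Cf(x)r}(\grad\Gamma(x))$ by concavity. The paper's proof is a two-line sketch; your write-up is a correct and careful expansion of exactly that sketch, with the only gap the paper leaves—the transfer from $u$ to $\Gamma$ and the concavity argument for the gradient containment—handled cleanly.
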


\begin{proof}
From Lemma \ref{l:abp2} we have \eqref{e:abp31} right away by choosing $M = C f(x)/\eps_0$. Equation \eqref{e:abp32} follows then as a consequence of Lemma \ref{l:abp1} and concavity.
\end{proof}

\begin{thm} \label{t:abp}
Let $u$ and $\Gamma$ be functions as in Lemma \ref{l:abp2}. There is a finite family of (open) cubes $Q_j$ ($j=1, \dots, m$) with diameters $d_j$ such that
\begin{enumerate}
\item[(a)] Any two cubes $Q_i$ and $Q_j$ in the family do not intersect.
\item[(b)] $\{u = \Gamma\} \subset \bigcup_{j=1}^m \overline Q_j$.
\item[(c)] $\{u = \Gamma\} \cap \overline Q_j \neq \emptyset$ for any $Q_j$.
\item[(d)] $d_j \leq \ro 2^{\frac{-1}{2-\sigma}}$, where $\ro = 1/(8\sqrt{n})$.
\item[(e)] $|\grad \Gamma(\overline Q_j)| \leq C(\max_{\overline Q_j} f )^n |Q_j|$.
\item[(f)] $|\{y \in 4 \sqrt{n} Q_j : u(y) > \Gamma(y) - C (\max_{\overline Q_j} f ) d_j^2\}| \geq \mu |Q_j|$.
\end{enumerate}

The constants $C>0$ and $\mu>0$ depend on $n$, $\Lambda$ and $\lambda$ (but not on $\sigma$).
\end{thm}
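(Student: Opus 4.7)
My plan is a Calder\'on--Zygmund-type dyadic stopping-time decomposition, following the second-order ABP argument in \cite{CC} but with the stopping criterion driven by Corollary \ref{c:abp3}. Choose $N$ large enough that $B_3 \subset Q^{\ast}$, where $Q^{\ast}$ is an axis-aligned cube of side length $2^N \ro 2^{-1/(2-\sigma)} / \sqrt n$, and subdivide $Q^{\ast}$ dyadically $N$ times to obtain the starting generation of cubes, each of diameter $d_0 = \ro 2^{-1/(2-\sigma)}$. Pairwise disjointness (a) is then built into the dyadic structure, and (d) holds automatically at this and every finer generation. From the starting generation I apply the selection rule: discard $Q$ if $\overline Q \cap \{u = \Gamma\} = \emptyset$; retain $Q$ as a final cube if both (e) and (f) already hold for $Q$ with constants $C$ and $\mu$ depending only on $n, \lambda, \Lambda$; otherwise replace $Q$ by its $2^n$ dyadic children and iterate. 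The retained family then automatically satisfies (b) and (c).

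The crux is termination at every $x \in \{u = \Gamma\}$ together with verification of (e) and (f) for the resulting cube. Apply Corollary \ref{c:abp3} at $x$ with a small $\eps_0$ to be fixed, producing $r = r(x) \in (0, d_0]$ for which \eqref{e:abp31} and \eqref{e:abp32} hold. The unique dyadic cube $Q^{(k)}$ containing $x$ at generation $k$ has diameter $d_k = 2^{-k} d_0$, and since $d_0 \geq r$, there is a first $k$ at which $d_k$ lies in a dimensional window $[c_1 r, c_2 r]$ arranged so that $4\sqrt n\, Q^{(k)}$ captures a fixed fraction of the ring $B_r \setminus B_{r/2}(x)$. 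At such a cube, concavity of $\Gamma$ and the equality $u(x) = \Gamma(x)$ give $u(x) + (y-x)\cdot \grad \Gamma(x) \geq \Gamma(y)$, so every $y$ in this intersection outside the exceptional set of \eqref{e:abp31} satisfies $u(y) \geq \Gamma(y) - C f(x) r^2 \geq \Gamma(y) - C'(\max_{\overline{Q^{(k)}}} f)\, d_k^2$, using $r \sim d_k$ and $f(x) \leq \max_{\overline{Q^{(k)}}} f$. For $\eps_0$ chosen small, the measure of such $y$ is at least $\mu |Q^{(k)}|$ for a dimensional $\mu$, which is (f). Property (e) follows from \eqref{e:abp32} together with the concavity-based extension of Lemma \ref{l:abp1}: at the same scale $\overline{Q^{(k)}}$ lies in a ball on which $\grad\Gamma$ stays within $Ch/r$ of $\grad\Gamma(x)$, with $h = C f(x) r^2$, so $|\grad\Gamma(\overline{Q^{(k)}})| \leq C f(x)^n r^n \leq C'(\max_{\overline{Q^{(k)}}} f)^n |Q^{(k)}|$, the ratio $r/d_k \sim 1$ being absorbed into the constant.

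The main obstacle I foresee is the scale-matching bookkeeping. The window $[c_1 r, c_2 r]$ must be wide enough (ratio at least $2$) that a dyadic $d_k$ lands inside it for any $r$, while $c_1$ has to be large enough for $4\sqrt n\, Q^{(k)}$ to meet the ring in a set of dimensional measure (needed for (f)) and $c_2$ small enough that $\overline{Q^{(k)}}$ stays inside the region where the concavity estimate for $\grad \Gamma$ is available (needed for (e)). These constraints pull in opposite directions but can be reconciled by a careful choice of $c_1, c_2$ (and, if a single dyadic scale cannot accommodate both, by reading (e) at an adjacent generation and combining). Crucially all constants depend only on $n, \lambda, \Lambda$ and not on $\sigma \in (0, 2)$; the entire $\sigma$-dependence is packaged into the upper bound $\ro 2^{-1/(2-\sigma)}$ on diameters, which is (d). Finiteness of the family will follow from a standard volume-counting argument together with compactness of $\{u = \Gamma\}$ and the boundedness of the depth of the stopping tree.
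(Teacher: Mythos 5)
Your decomposition and stopping rule are essentially the paper's own argument: tile at diameter $\ro 2^{\frac{-1}{2-\sigma}}$, discard cubes whose closures miss $\{u=\Gamma\}$, split cubes failing (e) or (f), and verify (e) and (f) at a contact point by matching the dyadic scale to the radius $r$ produced by Corollary \ref{c:abp3}, using concavity of $\Gamma$ for (f) and the gradient-image bound \eqref{e:abp32} (i.e.\ Lemma \ref{l:abp1}) for (e). The scale-matching bookkeeping you worry about is the same bookkeeping the paper performs with the window $r/4 < d_j < r/2$ and the dilation $4\sqrt{n}Q_j$, and your remark that only a fixed fraction of the ring $B_r\setminus B_{r/2}(x)$ needs to be captured inside $4\sqrt{n}Q^{(k)}$ is enough for (f). So the substance of the proof is in place.

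The one step that does not work as written is the finiteness claim. You propose to deduce it from ``volume counting, compactness of $\{u=\Gamma\}$, and boundedness of the depth of the stopping tree,'' but boundedness of the depth is not available a priori: the radius $r(x)$ furnished by Corollary \ref{c:abp3} comes from Lemma \ref{l:abp2}, which only asserts that \emph{some} ring $R_k(x)$ works, with no uniform lower bound on $r(x)$ over the contact set. Hence the stopping depth along the branch through $x$, roughly $\log_2\bigl(d_0/r(x)\bigr)$, is not uniformly controlled by your argument (it is bounded only a posteriori, once finiteness is known), and volume counting cannot compensate because the surviving cubes may be arbitrarily small. The missing ingredient is a compactness (K\"onig's lemma) argument, which is exactly how the paper concludes: if the splitting never terminates, finite branching yields an infinite nested sequence of split cubes; the intersection of their closures is a point $x_0$, which lies in the closed set $\{u=\Gamma\}$ since every cube of the sequence meets it; then your own scale-matching argument applied at $x_0$ shows that the cube of this sequence with diameter comparable to $r(x_0)$ already satisfies (e) and (f), so it would never have been split --- a contradiction. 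Replacing your ``bounded depth'' step by this limit-point contradiction closes the gap; the rest of your proposal is sound and coincides with the paper's proof.
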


\begin{figure}[bth]
\begin{center}
\input{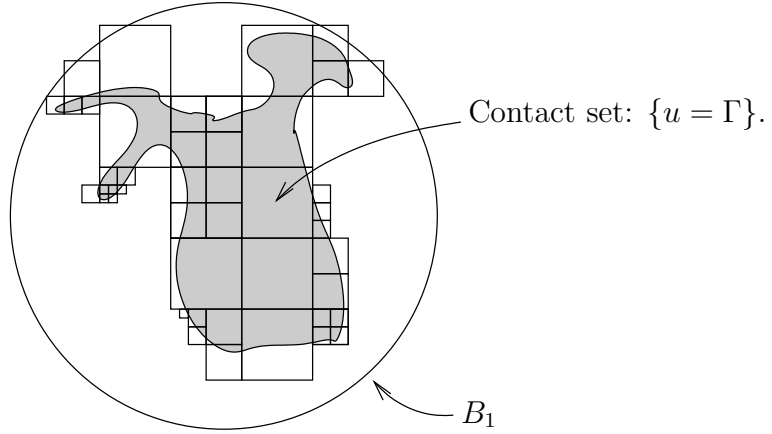}
\end{center}
\caption{The family of cubes covering $\{u = \Gamma\}$.}
\label{f:cubes}
\end{figure}

\begin{proof}
In order to obtain such family we start by covering $B_1$ with a tiling of cubes of diameter $\ro 2^{\frac{-1}{2-\sigma}}$. We discard all those that do not intersect $\{u = \Gamma\}$. Whenever a cube does not satisfy (e) and (f), we split it into $2^n$ cubes of half diameter and discard those whose closure does not intersect $\{u = \Gamma\}$. The problem is to prove that eventually all cubes satisfy (e) and (f) and this process finishes after a finite number of steps.

Let us assume the process does not finish in a finite number of steps. We assume it produces an infinite sequence nested of cubes. The intersection of their closures will be a point $x_0$. Since all of them intersect the contact set $\{u = \Gamma\}$, which is a closed set, then $u(x_0) = \Gamma(x_0)$. We will now find a contradiction by showing that eventually one of these cubes containing $x_0$ will not split.

Given $\eps_0>0$, by Corollary \ref{c:abp3}, there is a radius $r$ with $0<r<\ro 2^{\frac{-1}{2-\sigma}}$ such that
\begin{align}
&\frac{\abs{\set{y \in B_r(x_0) \setminus B_{r/2}(x_0) : u(y) < u(x_0) + (y-x_0) \cdot \grad \Gamma(x_0) - C f(x_0) r^2}}}{ |B_r(x_0) \setminus B_{r/2}(x_0)|} \leq \eps_0. \label{e:a1}\\
&|\grad \Gamma (B_{r/4}(x_0))| \leq C f(x_0)^n |B_{r/4}(x_0)| \label{e:a2}
\end{align}

There is a cube $Q_j$, with $x_0 \in \overline Q_j$, with diameter $d_j$, such that $r/4 < d_j < r/2$. Therefore (see Figure \ref{f:qandb})
\begin{align}
 B_{r/2}(x_0) &\supset \overline Q_j \\
B_{r}(x_0) &\subset 4 \sqrt{n} Q_j
\end{align}

\begin{figure}[th]
\begin{center}
\input{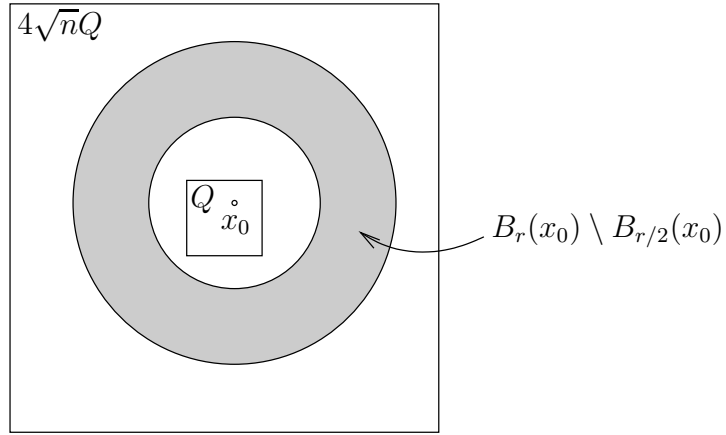}
\end{center}
\caption{The largest cube in the family containing $x_0$ and contained in $B_{r/2}$.}
\label{f:qandb}
\end{figure}

Recall that in $B_2$, $\Gamma(y) \leq u(x_0) + (y-x_0) \cdot \grad \Gamma(x_0)$ simply because $\Gamma$ is concave and $\Gamma(x_0) = u(x_0)$. Using \eqref{e:a1} and that $d_j$ and $r$ are comparable, we get
\begin{align*}
|\{ y \in 4 \sqrt{n} Q_j &: u(y) \geq \Gamma(y) - C (\max_{\overline Q_j} f) d_j^2 \} | \geq \\
&\geq \abs{\set{y \in 4\sqrt{n} Q_j : u(y) \geq u(x_0) + (y-x_0) \cdot \grad \Gamma(x_0) - C f(x_0) r^2 }} \\
&\geq (1-\eps_0) |B_r(x_0) \setminus B_{r/2}(x_0)| \geq \mu |Q_j|
\end{align*}

Thus (f) follows. Moreover, since $\overline Q_j \subset B_r$, also (e) holds for $Q_j$. Therefore $Q_j$ would not be split and the process must stop.
\end{proof}

\begin{remark}
Note that the upper bound for the diameters $\ro 2^\frac{-1}{2-\sigma}$ becomes very small as $\sigma$ is close to $2$. If we add $\sum |\grad \Gamma(Q_j)|$ and let $\sigma \to 2$, we obtain the classical Alexandroff estimate as the limit of the Riemann sums. For each $\sigma>0$ we have
\[ |\grad \Gamma(\{u = \Gamma\})| \leq \sum_j C(\max_{\overline Q_j} f^+ )^n |Q_j| \ . \]
As $\sigma \to 2$, the cube covering of $\{u=\Gamma\}$ becomes thinner and the above becomes the integral
\[ |\grad \Gamma(\{u = \Gamma\})| \leq C \int_{\{u=\Gamma\}} f^+(x)^n \dd x \ . \]
\end{remark}

\section{A special function}

\label{s:specialfunction}

In this section we only construct a special function that is a subsolution of a minimal equation outside a small ball. The importance of this function is that it is strictly positive in a larger ball and we will use that fact in a later section to prove the Harnack inequality.

\begin{lemma} \label{l:negp}
There is a $p>0$ and $\sigma_0 \in (0,2)$ such that the function
\[ f(x) = \min( 2^p , |x|^{-p} ) \]
is a subsolution to
\begin{equation} \label{e:t1}
\Mm f(x) \geq 0 
\end{equation}
for every $\sigma_0<\sigma<2$ and $|x|>1$.
\end{lemma}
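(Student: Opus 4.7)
The plan is to exploit the fact that $f$ coincides with the homogeneous function $u(x):=|x|^{-p}$ outside $B_{1/2}$, and to show that, as $\sigma\to 2$, the operator $\Mm f$ inherits positivity from the classical Pucci operator applied to $u$. The Hessian of $u$ at a point $re_1$ has one positive eigenvalue $p(p+1)r^{-p-2}$ (radial) and $n-1$ negative eigenvalues $-pr^{-p-2}$ (tangential), so the classical minimal Pucci operator evaluates to $p[\lambda(p+1)-\Lambda(n-1)]r^{-p-2}>0$ as soon as $p>\Lambda(n-1)/\lambda-1$. The whole point of the lemma is that the same threshold on $p$ still works for the nonlocal operator, modulo errors which carry an explicit $(2-\sigma)$ factor and can therefore be absorbed by taking $\sigma$ close to $2$.

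Since $f$ is radial and $C^{1,1}$ at every point with $|x|\neq 1/2$, Lemma~\ref{l:classic2} allows us to compute $\Mm f$ classically, and by rotational symmetry it is enough to prove $\Mm f(re_1)\geq 0$ for each fixed $r>1$. Split
\[\Mm f(re_1)=I_{\mathrm{near}}+I_{\mathrm{far}},\qquad I_{\mathrm{near}}=(2-\sigma)\int_{|y|<r/2}\frac{\lambda\si^+-\Lambda\si^-}{|y|^{n+\sigma}}\,dy,\]
with $I_{\mathrm{far}}$ the complementary integral. On $\{|y|<r/2\}$ one has $|x\pm y|\geq r/2>1/2$, hence $\si(f,x,y)=\si(u,x,y)$. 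Taylor expand $\si(u,x,y)=y^TD^2u(re_1)y+R(y)$ with the fourth order bound $|R(y)|\leq Cp^4r^{-p-4}|y|^4$ on $B_{r/2}(x)$ (from $|D^4u|\leq Cp^4|\cdot|^{-p-4}$ there). Writing $y=\rho\theta$ and using $\int_0^{r/2}\rho^{1-\sigma}\,d\rho=(r/2)^{2-\sigma}/(2-\sigma)$, the $(2-\sigma)$ prefactor of $\Mm$ disappears and the leading quadratic contribution to $I_{\mathrm{near}}$ becomes
\[pr^{-p-2}(r/2)^{2-\sigma}\hat C(p),\qquad \hat C(p):=\int_{S^{n-1}}\bigl[\lambda g^+-\Lambda g^-\bigr]\,d\theta,\quad g(\theta):=(p+2)\theta_1^2-1.\]

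The core computation is the angular estimate $\hat C(p)\geq c_0 p$ for all large $p$. Decomposing $\lambda g^+-\Lambda g^-=\lambda g-(\Lambda-\lambda)g^-$, one computes $\int_{S^{n-1}}g\,d\theta=\omega_{n-1}(p+2-n)/n$, which grows linearly in $p$, while the set $\{g<0\}=\{|\theta_1|<1/\sqrt{p+2}\}$ is an equatorial band of spherical measure $O(1/\sqrt{p})$, so $\int g^-\leq|\{g<0\}|=O(1/\sqrt{p})$. Hence $\hat C(p)\geq\lambda\omega_{n-1}(p+2-n)/n-O(1/\sqrt{p})\geq c_0 p$ once $p\geq p_0(n,\lambda,\Lambda)$. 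Combined with $(r/2)^{2-\sigma}\geq 2^{\sigma-2}\geq 1/2$ (valid for $r\geq 1$ and $\sigma\in[1,2]$), this gives $I_{\mathrm{near}}^{\mathrm{lead}}\geq(c_0/2)\,p^2r^{-p-\sigma}$. The Taylor remainder, integrated against $|y|^{-n-\sigma}$, contributes at most $C(2-\sigma)p^4r^{-p-\sigma}/(4-\sigma)$. For $I_{\mathrm{far}}$ one drops the non-negative piece and uses $\si(f,x,y)\geq-2f(x)=-2r^{-p}$ together with $\int_{|y|\geq r/2}|y|^{-n-\sigma}\,dy=O(r^{-\sigma})$ to obtain $I_{\mathrm{far}}\geq -C(2-\sigma)r^{-p-\sigma}$.

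Combining the three estimates,
\[\Mm f(re_1)\geq r^{-p-\sigma}\bigl[\tfrac{c_0}{2}p^2-C(2-\sigma)(p^4+1)\bigr].\]
Fix $p\geq p_0$; then choose $\sigma_0\in(0,2)$ so that $(2-\sigma_0)C(p^4+1)<c_0p^2/4$. For every $r>1$ and every $\sigma\in(\sigma_0,2)$ the right-hand side is strictly positive, proving $\Mm f\geq 0$. The main obstacle is the angular estimate $\hat C(p)\gtrsim p$: the Hessian of $|x|^{-p}$ has $n-1$ negative tangential eigenvalues (each weighted by the worse constant $\Lambda$) against a single positive radial one (weighted by $\lambda$), and one must argue that the amplification $p+1$ in the radial direction, together with the $1/\sqrt{p}$ shrinkage of the ``bad equatorial band'' where $g<0$, suffices to overpower the tangential penalty. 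Once this dimension-dependent threshold on $p$ is isolated, the Taylor remainder and the tail integral are absorbed by $(2-\sigma)$ in a routine way.
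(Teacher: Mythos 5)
Your proof is correct and follows essentially the same strategy as the paper's: expand $|x|^{-p}$ to second order, show the angular integral of the Hessian quadratic form against $\lambda(\cdot)^+-\Lambda(\cdot)^-$ grows linearly in $p$, and absorb both the higher-order remainder and the far-field tail by the $(2-\sigma)$ prefactor. The paper first reduces to $|x|=1$ by the scaling $\tilde f(y)=|x|^pf(|x|y)$ and then splits at a small fixed radius $r$; you instead treat every $r>1$ at once by splitting at radius $r/2$, which makes all three contributions scale identically as $r^{-p-\sigma}$ and eliminates the need to choose a small split radius before $\sigma_0$. Your explicit estimate of the equatorial band $\{|\theta_1|<1/\sqrt{p+2}\}$ replaces the paper's condition \eqref{e:r1}, but both amount to the same linear-in-$p$ lower bound on the angular integral. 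One small constant slip: on the segment between $re_1\pm y$ with $|y|<r/2$ one only has $|z|\geq r/2$, so the bound on $D^4u$ carries an extra factor $2^{p+4}$ and the remainder estimate should read $|R(y)|\leq Cp^4\,2^{p}\,r^{-p-4}|y|^4$; this is harmless since $p$ is fixed before $\sigma_0$ is chosen, and the paper's estimate has an analogous $2^{p+1}$ in its tail term. Also, you do not actually need to invoke Lemma \ref{l:classic2}: $f$ is smooth away from $\partial B_{1/2}$, hence $C^{1,1}$ at every $|x|>1$, so $\Mm f(x)$ is defined classically directly.
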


\begin{proof}
It is enough to show \eqref{e:t1} for $x = e_1 = (1,0,\dots,0)$. For every other $x$ such that $|x|=1$, the relation follows by rotation. If $|x|>1$, we can consider the function $\tilde f(y) = |x|^p f(|x|y) \geq f(y)$, thus $\Mm f(x) = C \Mm \tilde f(x/|x|) \geq C \Mm f(x/|x|) > 0$.

Let $x = e_1 = (1,0,\dots,0)$. We use the following elementary relations that hold for any $a>b>0$ and $q>0$,
\begin{align}
(a+b)^{-q} &\geq a^{-q} (1 - q \frac b a) \\
(a+b)^{-q} + (a-b)^{-q} &\geq 2 a^{-q} + \frac{1}{2} q (q+1) b^2 a^{-q-2}
\end{align}
then for $|y|<1/2$,
\begin{align*}
\si &= |x+y|^{-p} + |x-y|^{-p} - 2|x|^{-p} \\
&= (1 + |y|^2 + 2 y_1)^{-p/2}+(1 + |y|^2 - 2 y_1)^{-p/2}-2 \\
&\geq 2 (1+|y|^2)^{-p/2} + \frac{1}{2}p(p+2) y_1^2 (1 + |y|^2)^{-p/2-2} - 2 \\
&\geq p \left( -|y|^2 + \frac{1}{2}(p+2) y_1^2 - \frac{1}{4}(p+2)(p+4) y_1^2 |y|^2 \right)
\end{align*}

We choose $p$ large such that
\begin{equation} \label{e:r1}
\frac{1}{2} (p+2) \lambda \int_{\bdary B_1} y_1^2 \dd \sigma(y) - \Lambda |\bdary B_1| = \delta_0 > 0
\end{equation}

We use the above relation to bound the part of the integral in the definition of $\Mm$ for which $y$ stays in a small ball $B_r$ (with $r<1/2$). We estimate $\Mm f(e_1)$.
\begin{align*}
\Mm f(e_1) &= (2-\sigma) \int_{B_r} \frac{\lambda \si^+ - \Lambda \si^-}{|y|^{n+\sigma}} \dd y + (2-\sigma) \int_{\R^n \setminus B_r} \frac{\lambda \si^+ - \Lambda \si^-}{|y|^{n+\sigma}} \dd y \\
&\geq (2-\sigma) C \int_0^r \frac{ \lambda p \delta_0 s^2 - \frac{1}{4} p(p+2)(p+4) C \Lambda s^4}{s^{1+\sigma}} \dd s - (2-\sigma) \int_{\R^n \setminus B_r} \Lambda \frac{2^p}{|y|^{n+\sigma}} \dd y \\
&\geq c r^{2-\sigma} p \delta_0 - p (p+2)(p+4) C \frac{2-\sigma}{4-\sigma} r^{4-\sigma} - \frac{2-\sigma}{\sigma} C 2^{p+1} r^{-\sigma}
\end{align*}
where we used \eqref{e:r1} to bound the first integral and that $0 \leq f(x) \leq 2^p$ to bound the second integral. Now we choose (and fix) $r \in (0,1/2)$ small, and then take $\sigma_0$ close enough to $2$, so that if $2>\sigma>\sigma_0$, the factor $(2-\sigma)$ makes the second and third terms small enough so that we get
\[ \Mm f(e_1) \geq \frac{c r^{2-\sigma} p \delta_0 }{2} > 0 \]
which finishes the proof.
\end{proof}

\begin{cor} \label{c:negp}
Given any $\sigma_0 \in (0,2)$, there is a $p>0$ and $\delta$ such that the function
\[ f(x) = \min( \delta^{-p} , |x|^{-p} ) \]
is a subsolution to
\begin{equation}
\Mm f(x) \geq 0 
\end{equation}
for every $\sigma_0<\sigma<2$ and $|x|>1$.
\end{cor}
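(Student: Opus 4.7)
The plan is to replay the proof of Lemma \ref{l:negp}, sharpening a single step so that the argument no longer relies on the prefactor $(2-\sigma)$ being small. The key new observation is that since $f\ge 0$ pointwise,
\[
\si(f,e_1,y)^{-} \;=\; \bigl(2f(e_1) - f(e_1+y) - f(e_1-y)\bigr)^{+} \;\le\; 2f(e_1) \;=\; 2,
\]
a bound which is completely independent of the truncation level $\delta^{-p}$. This replaces the crude estimate $|\si|\lesssim 2^{p}$ used in the proof of Lemma \ref{l:negp}, where the large constant was only absorbed by taking $\sigma$ near $2$.

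By rotation invariance together with the scaling step from Lemma \ref{l:negp} (applied to $\tilde f(y) = |x|^{p} f(|x|y)$, which for $|x|>1$ and $\delta<1$ satisfies $\tilde f\ge f$ pointwise and $\tilde f(x/|x|) = f(x/|x|) = 1$), it suffices to show $\Mm f(e_1)\ge 0$. Fix $\delta\le 1/2$ and any $r\in(0,1-\delta)$ to be chosen. For $y\in B_{r}$, neither $e_1+y$ nor $e_1-y$ enters the truncation region, so $f(e_1\pm y) = |e_1\pm y|^{-p}$ and the Taylor lower bound of Lemma \ref{l:negp} applies verbatim. Integrating against the Pucci weights and using \eqref{e:r1} (which gives $\delta_0\ge c\lambda p$ for $p$ large),
\[
(2-\sigma)\!\int_{B_r}\!\frac{\lambda\si^{+}-\Lambda\si^{-}}{|y|^{n+\sigma}}\dd y \;\ge\; p\delta_0\, r^{2-\sigma} \;-\; \tfrac{2-\sigma}{4-\sigma}\, C\Lambda p^{3}\, r^{4-\sigma}.
\]
On the complement, the new uniform bound yields
\[
(2-\sigma)\!\int_{\R^{n}\setminus B_r}\!\frac{\lambda\si^{+}-\Lambda\si^{-}}{|y|^{n+\sigma}}\dd y \;\ge\; -\,\tfrac{2-\sigma}{\sigma}\,C'\Lambda\,r^{-\sigma},
\]
whose right-hand side is, crucially, independent of $\delta$. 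Since the prefactors $(2-\sigma)/(4-\sigma)\le 1/2$ and $(2-\sigma)/\sigma\le 2/\sigma_0$ are uniformly bounded on $\sigma\in(\sigma_0,2)$, the resulting estimate on $\Mm f(e_1)$ is also uniform in $\sigma$.

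To finish, I divide the combined inequality by $r^{2-\sigma}$ and set $r^{2} = \kappa/p$ for a parameter $\kappa$ to be chosen; the three contributions become, up to absolute constants, $c\lambda p^{2}$, $C\Lambda\kappa p^{2}$, and $C'\Lambda p/(\kappa\sigma_0)$. Choosing first $\kappa$ small enough (depending only on $\lambda$ and $\Lambda$) so that $C\Lambda\kappa \le c\lambda/2$, and then $p$ large enough (depending on $\sigma_0,\lambda,\Lambda,n$) so that $c\lambda p^{2}/2 \ge C'\Lambda p/(\kappa\sigma_0)$, yields $\Mm f(e_1)>0$. With this $p$, one has $r = (\kappa/p)^{1/2}<1/2$, so the choice $\delta = 1/2$ is consistent with every hypothesis used along the way. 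The main obstacle is precisely the identification of the $\delta$-free far-field bound $\si^{-}\le 2$; once that is in hand, the rest of the argument is a uniform-in-$\sigma$ balancing of exponents that follows directly from computations already carried out in the proof of Lemma \ref{l:negp}.
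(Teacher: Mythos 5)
Your proof is correct and takes a genuinely different route from the paper's. The paper proves Corollary~\ref{c:negp} in two steps: for $\sigma$ above the threshold $\sigma_1$ produced by Lemma~\ref{l:negp} it simply reuses that lemma with $\delta = 1/2$; for $\sigma_0 < \sigma \le \sigma_1$ it fixes $p = \max(p_0,n)$, so that $|x|^{-p}$ is non-integrable at the origin, bounds the negative contribution $I_2$ uniformly (using that $f$ is $C^2$ near $e_1$, bounded below, and $\sigma>\sigma_0$), and then sends $\delta \to 0$ to make the positive part $I_1$ arbitrarily large. Your argument instead revisits the one-shot balance of Lemma~\ref{l:negp} and sharpens its far-field bound: since $f\ge 0$ and $f(e_1)=1$, one has $\si(f,e_1,\cdot)^{-} \le 2 f(e_1) = 2$ regardless of $\delta$ and $p$, replacing the crude $O(2^{p})$ estimate of the lemma. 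After dividing by $r^{2-\sigma}$ and setting $r^2 = \kappa/p$, the three contributions scale like $\lambda p^2$, $\Lambda \kappa p^2$, and $\Lambda p /(\kappa\sigma_0)$, and the inequality closes by choosing $\kappa$ small and then $p$ large, with the $r<1/2$ constraint automatically satisfied. The tradeoff is transparent: you keep $\delta = 1/2$ fixed and pay with $p$ growing like $1/\sigma_0$, while the paper keeps $p$ fixed once it exceeds $n$ and shrinks $\delta$ instead. Both choices work; the observation that the far-field negative part of $\si$ admits a $\delta$- and $p$-independent bound is the structural improvement that lets you dispense with both the case split and the need for the $(2-\sigma)$ prefactor to be small.
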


\begin{proof}
The only difference with Lemma \ref{l:negp} is that now we are given the value of $\sigma_0$ beforehand. Let $\sigma_1$ and $p_0$ be the $\sigma_0$ and $p$ of Lemma \ref{l:negp}. So we know that for $\sigma > \sigma_1$, the result of the Corollary holds if $\delta = 1/2$ and $p=p_0$. If we take $\delta<1/2$ we are only making the function larger away from $x$, so the result will still hold for $\sigma > \sigma_1$. Now we will pick $\delta$ smaller so that the result also holds for $\sigma_0 < \sigma \leq \sigma_1$.

The key is that if $p \geq n$, $|x|^{-p}$ is not integrable around the origin. So we take $p = \max(p_0,n)$. Now, let $x = e_1$ as in the proof of lemma \ref{l:negp}. Assume $\sigma_0 < \sigma \leq \sigma_1$. We write
\begin{align*}
\Mm f(e_1) &= (2-\sigma) \int_{\R^n} \frac{\lambda \si^+}{|y|^{n+\sigma}} \dd y - (2-\sigma) \int_{\R^n} \frac{\Lambda \si^-}{|y|^{n+\sigma}} \dd y \\
&=:  I_1 + I_2
\end{align*}
where $I_1$ and $I_2$ represent the two terms in the right hand side above. Since $\sigma > \sigma_0$, $f \in C^2(x)$ and $f$ is bounded below, we have $I_2 \geq -C$ for some constant $C$ depending on $\sigma_0$, $\lambda$, $\Lambda$ and dimension. On the other hand, since $\sigma \leq \sigma_1$ and $(|x+y|^{-p}+|x-y|^{-p}-|x|^{-p})^+$ is not integrable, if we choose $\delta$ small enough we can make $I_1$ be as large as we wish. In particular, we can choose $\delta$ such that $I_1 > C > -I_2$, thus $\Mm f(e_1) > 0$. 
\end{proof}

%\begin{cor}
%There is a function $\Phi$ such that
%\begin{itemize}
%\item $\Phi$ is continuous in $\R^n$.
%\item $\Phi(x) = 0$ for $x$ outside $B_2$.
%\item $\Phi(x) > 0$ for $x \in B_2$.
%\item $\Mm \Phi > 0$ in $\R^n \setminus B_1$.
%\item $\Mm \Phi > \psi(x)$ in $\R^n$ for some smooth function $\psi(x)$ supported in $\overline B_1$.
%\end{itemize}
%for every $\sigma>\sigma_0$.
%\end{cor}

\begin{cor} \label{c:phi}
Given any $\sigma_0 \in (0,2)$, there is a function $\Phi$ such that
\begin{itemize}
\item $\Phi$ is continuous in $\R^n$.
\item $\Phi(x) = 0$ for $x$ outside $B_{2\sqrt{n}}$.
\item $\Phi(x) > 2$ for $x \in Q_3$.
\item $\Mm \Phi > -\psi(x)$ in $\R^n$ for some positive function $\psi(x)$ supported in $\overline B_{1/4}$.
\end{itemize}
for every $\sigma>\sigma_0$.
\end{cor}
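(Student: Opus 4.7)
The plan is to build $\Phi$ by rescaling and truncating the power-type subsolution from Corollary~\ref{c:negp}. Given $\sigma_0$, apply that corollary to obtain $p>0$ and $\delta\in(0,1)$ such that $h(x):=\min(\delta^{-p},|x|^{-p})$ satisfies $\Mm h>0$ (strictly, as the proof of Lemma~\ref{l:negp} actually delivers) on $\{|x|>1\}$ for every $\sigma>\sigma_0$. Rescale by $\tilde h(x):=h(4x)$; since $\Mm$ is $\sigma$-homogeneous under dilations ($\Mm\tilde h(x)=4^\sigma \Mm h(4x)$), this pushes the strict positivity to $\{|x|>1/4\}$. Finally set
\[
\Phi(x):=c\bigl(\tilde h(x)-\kappa\bigr)^{+},\qquad \kappa:=\tilde h(2\sqrt n\,e_1)=4^{-p}(2\sqrt n)^{-p},
\]
where $c>0$ is a normalization constant.

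Continuity of $\Phi$ is automatic, and $\Phi=0$ precisely on $\{\tilde h\leq\kappa\}=\{|x|\geq 2\sqrt n\}$; for $x\in Q_3\subset B_{3\sqrt n/2}$ one has $\tilde h(x)\geq 4^{-p}(3\sqrt n/2)^{-p}>\kappa$ strictly, so $c$ can be chosen large enough to force $\Phi>2$ on $Q_3$. For the key inequality $\Mm\Phi>-\psi$, I would write $\Phi=u+w$ with
\[
u(x):=c\bigl(\tilde h(x)-\kappa\bigr),\qquad w(x):=c\bigl(\kappa-\tilde h(x)\bigr)^{+},
\]
and exploit the superadditivity $\Mm(u+w)\geq\Mm u+\Mm w$ coming from $\Mm=\inf_{L\in\LI_0}L$. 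Constants are annihilated by $\Mm$, so $\Mm u=c\,\Mm\tilde h>0$ on $\{|x|>1/4\}$. The function $w$ is nonnegative and supported in $\{|x|\geq 2\sqrt n\}$, so whenever $|x|<2\sqrt n$ we have $w(x)=0$ and $\si(w,x,\cdot)\geq 0$, giving $\Mm w(x)\geq 0$. Combining yields $\Mm\Phi>0$ on the annulus $1/4<|x|<2\sqrt n$. For $|x|\geq 2\sqrt n$, $\Phi(x)=0$ while $\Phi\geq 0$ globally and $\Phi>0$ on $B_{2\sqrt n}$, so $\si(\Phi,x,\cdot)\geq 0$ with strict positivity on a set of positive measure, whence $\Mm\Phi(x)>0$ directly. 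Thus $\Mm\Phi>0$ throughout $\R^n\setminus B_{1/4}$.

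Inside $\overline B_{1/4}$ a coarse estimate gives $\Mm\Phi\geq -C_0$ for some finite $C_0=C_0(n,\lambda,\Lambda,\sigma_0)$: $\Phi$ is bounded, smooth away from the inner kink $|x|=\delta/4$, and attains its global maximum on $\overline B_{\delta/4}$, so splitting $L\Phi(x)$ for $L\in\LI_0$ into $|y|$ small (where $\si$ is controlled either by the $C^{1,1}$-behaviour of $\Phi$ at interior regular points or by the local-max bound $\si\leq 0$ at points of $\overline B_{\delta/4}$) and $|y|$ large (where the integrand is dominated by $\|\Phi\|_\infty\cdot|y|^{-n-\sigma}$) yields the bound, with the prefactor $(2-\sigma)$ absorbing any would-be blow-up as $\sigma\to 2$. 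Pick $\psi$ to be any continuous nonnegative function supported in $\overline B_{1/4}$ with $\psi\geq C_0+1$ in the interior; since $\Mm\Phi$ is already strictly positive on $\{|x|=1/4\}$ (Lemma~\ref{l:negp} provides a definite positive lower bound), the strict inequality $\Mm\Phi>-\psi$ holds everywhere. The main obstacle I anticipate is precisely this uniform-in-$\sigma$ lower bound on $\Mm\Phi$ inside $\overline B_{1/4}$, because $\tilde h$ is only Lipschitz at the kink $|x|=\delta/4$; the cleanest way around it is to interpret the inequality in the viscosity sense, replacing $\Phi$ near a contact point by a $C^2$ test function from above and using that at any point of $\overline B_{\delta/4}$ the function $\Phi$ sits at its global maximum, so that $\si$ is controlled by $2\|\Phi\|_\infty$ outside the test neighborhood.
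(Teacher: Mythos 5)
Your approach is the same in outline as the paper's — take the power-type subsolution from Corollary~\ref{c:negp}, rescale by~$4$, subtract a constant and truncate to get compact support, exploit superadditivity of $\Mm$ — but it omits the one step the paper actually needs: inside the small inner ball the paper does \emph{not} leave the constant plateau in place, it glues in a quadratic paraboloid $q$ matching value and gradient of $|x|^{-p}-(2\sqrt n)^{-p}$ across the inner sphere, making $\Phi$ globally $C^{1,1}$ in $B_{2\sqrt n}$ (and $q$ lies above the constant it replaces, so the comparison with the power function still yields $\Mm\Phi\geq 0$ outside $B_{1/4}$). This is not cosmetic. Your $\Phi=c(\tilde h-\kappa)^+$ has a concave Lipschitz ridge on the sphere $|x|=\delta/4$: the radial derivative of $\tilde h$ jumps from $0$ to a strictly negative value there, so for $x_0$ on the ridge and $y$ pointing inward the second difference $\si(\Phi,x_0,y)$ is comparable to $-|y|$, not $-|y|^2$. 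The negative part of the integrand then behaves like $t^{-\sigma}$ near $t=|y|=0$, so $\int_{r<|y|<\delta/4}\si^-/|y|^{n+\sigma}\,dy$ diverges as $r\to 0$ whenever $\sigma\geq 1$, and even for $\sigma<1$ the bound blows up as $\sigma\to 1^-$; the prefactor $(2-\sigma)$ does not save it. So $\Mm\Phi(x_0)=-\infty$ in the classical sense and no bounded $\psi$ can work.

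The viscosity-sense workaround you sketch does not close this gap. You propose to replace $\Phi$ by a $C^2$ test function $\varphi$ on a small neighborhood $B_r(x_0)$ and observe $\si$ is bounded outside. But the divergent contribution lives in the annulus $r<|y|<\delta/4$, which is \emph{outside} the test neighborhood yet still at the kink scale: there $v_r=\Phi$, and $\si(v_r,x_0,y)=\si(\Phi,x_0,y)$ is still only linear in $|y|$. A uniform $L^\infty$ bound on $\si$ only gives $\int_{|y|>r}\si^-/|y|^{n+\sigma}\,dy\lesssim r^{-\sigma}$, which is not uniform in $r$. Since the viscosity definition (Definition~\ref{d:viscositysolutions}) requires the inequality for \emph{every} admissible neighborhood $N$, taking $r\to 0$ shows the subsolution property $\Mm\Phi\geq -\psi$ fails at ridge points for any bounded $\psi$. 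The paper's paraboloid insertion is exactly what eliminates this: with $\Phi\in C^{1,1}(B_{2\sqrt n})$, Lemma~\ref{l:c11} gives $\Mm\Phi$ continuous (hence bounded below) on $\overline B_{1/4}$, and then $\psi$ can simply be taken as (essentially) $-\min(\Mm\Phi,0)$ extended by zero.
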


\begin{proof}
Let $p$ and $\delta$ be as in Corollary \ref{c:negp}. We consider 
\[ \Phi = c \begin{cases}
             0 &\text{in } \R^n \setminus B_{2\sqrt{n}} \\
	     |x|^{-p} - (2\sqrt{n})^{-p} &\text{in } B_{2\sqrt{n}} \setminus B_\delta \\
	     q &\text{in } B_\delta
            \end{cases} \]
where $q$ is a quadratic paraboloid chosen so that $\Phi$ is $C^{1,1}$ accross $\bdary B_\delta$. We choose the constant $c$ so that $\Phi(x) > 2$ for $x \in Q_3$ (Recall $Q_3 \subset B_{3\sqrt{n}/2} \subset B_{2\sqrt{n}}$). Since $\Phi \in C^{1,1}(B_{2\sqrt{n}})$, $\Mm \Phi$ is continuous in $B_{2\sqrt{n}}$ and from Corollary \ref{c:negp}, $\Mm \Phi \geq 0$ outside $B_{1/4}$.
\end{proof}

\section{Point estimates}

The main ingredient in the proof of Harnack inequality, as shown in \cite{CC}, is a lemma that links a pointwise estimate with an estimate in measure. The corresponding lemma in our context is the following.

\begin{lemma} \label{l:keylemma}
Let $\sigma>\sigma_0>0$. There exist constants $\eps_0>0$, $0<\mu<1$ and $M>1$ (depending only on $\sigma_0$, $\lambda$, $\Lambda$ and dimension) such that if
\begin{itemize}
\item $u \geq 0$ in $\R^n$.
\item $\inf_{Q_3} u \leq 1$.
\item $\Mm u \leq \eps_0$ in $Q_{4\sqrt{n}}$.
\end{itemize}
then 
$ |\{u \leq M \} \cap Q_1 | > \mu $.
\end{lemma}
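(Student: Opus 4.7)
The plan is to apply the nonlocal ABP estimate of Theorem \ref{t:abp} to the auxiliary function $w := \Phi - u$, where $\Phi$ is the barrier from Corollary \ref{c:phi} applied with our $\sigma_0$. Recall that $\Phi \geq 0$, $\Phi \equiv 0$ outside $B_{2\sqrt{n}}$, $\Phi > 2$ on $Q_3$, and $\Mm \Phi \geq -\psi$ for a fixed nonnegative $\psi$ supported in $\overline{B_{1/4}}$. Using the elementary identity $\Mp(f-g) \geq \Mm f - \Mm g$ (which follows from $\sup_L(A_L + B_L) \geq \sup_L A_L + \inf_L B_L$) together with the hypothesis $\Mm u \leq \eps_0$ in $Q_{4\sqrt{n}}$, we obtain
\[ \Mp w \geq \Mm\Phi - \Mm u \geq -(\psi + \eps_0) \qquad \text{in } Q_{4\sqrt{n}} \]
in the viscosity sense (formally justified by Lemma \ref{l:Sclass}). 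Moreover $w \leq 0$ outside $B_{2\sqrt{n}}$, and since $\inf_{Q_3} u \leq 1$ while $\Phi > 2$ on $Q_3$, we have $\sup w > 1$.

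Next, after a rescaling so that $w \leq 0$ outside $B_1$, I would apply Theorem \ref{t:abp} with right-hand side $f := \psi + \eps_0$ to obtain a disjoint family of cubes $\{Q_j\}$ covering the contact set $\{w = \Gamma\}$ between $w$ and its concave envelope $\Gamma$. Since $\Gamma$ is concave, nonnegative, takes a value above $1$, and vanishes outside a bounded ball, its superdifferential over the contact set covers a ball of universal radius, so $|\grad \Gamma(\{w = \Gamma\})| \geq c_0 > 0$. I would then classify the cubes as \emph{near} ($\overline{Q_j} \cap \overline{B_{1/4}} \neq \emptyset$) or \emph{far}. On far cubes, $\max_{\overline{Q_j}} f = \eps_0$, so by property (e), $\sum_{\text{far}} |\grad \Gamma(Q_j)| \leq C\eps_0^n |B_{2\sqrt{n}}|$, which is at most $c_0/2$ if $\eps_0$ is chosen small (universally). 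Hence the near cubes must carry at least $c_0/2$ of the gradient mass, and since $\psi$ is universally bounded, this forces $\sum_{\text{near}} |Q_j| \geq c_1$ for a universal $c_1$. By property (f), each near cube provides a subset of $4\sqrt{n}Q_j$ of measure at least $\mu|Q_j|$ on which $w \geq \Gamma - C(\|\psi\|_\infty + \eps_0)d_j^2$, and hence $u = \Phi - w \leq \|\Phi\|_\infty + C' =: M$, a universal constant.

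The last step is to pass from the summed estimate $\sum_{\text{near}} \mu|Q_j| \geq \mu c_1$ to an actual lower bound for $|\{u \leq M\} \cap Q_1|$. A Vitali-type argument, relying on the disjointness of the $Q_j$ and the bounded overlap of their dilations $4\sqrt{n}Q_j$, recovers the measure of the union from the summed measure. The main obstacle is the final geometric bookkeeping: verifying that the good sets inside the $4\sqrt{n}Q_j$ of near cubes actually land inside $Q_1$. This is where the precise choice $\rho_0 = 1/(8\sqrt{n})$ from Lemma \ref{l:abp2} becomes essential, since it forces the cube diameters (even after the $4\sqrt{n}$ dilation) to stay small enough that the near cubes' expansions remain inside $Q_1$, possibly after adjusting $M$ and $\mu$ by universal factors depending only on $n$, $\lambda$, $\Lambda$, and $\sigma_0$.
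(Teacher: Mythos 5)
Your proposal is correct and follows essentially the same route as the paper: the same barrier $\Phi$ from Corollary \ref{c:phi}, the same auxiliary function $\Phi-u$ with $\Mp(\Phi-u)\geq -(\psi+\eps_0)$, the rescaled Theorem \ref{t:abp}, smallness of $\eps_0$ to discard the cubes away from $\supp\psi$, property (f) plus $\Gamma\geq 0$ to get the bound $u\leq M$, and the finite-overlap covering argument with the diameter bound $\ro=1/(8\sqrt n)$ placing the dilated cubes inside $B_{1/2}\subset Q_1$. The only difference is presentational: you split cubes into near/far and bound the gradient mass directly, while the paper runs the single chain $1\leq \max v\leq C\eps_0+C\bigl(\sum_j(\max_{Q_j}\psi^+)^n|Q_j|\bigr)^{1/n}$; these are the same ABP mechanism.
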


By $Q_r(x)$ we mean the open cube $\{ y : |y_j - x_j| \leq r/2 \text{ for every j} \}$, and $Q_r := Q_r(0)$. We will also use the following notation for dilations: if $Q = Q_r(x)$, then $\lambda Q := Q_{\lambda r}(x)$. 

If we assume $\sigma \leq \sigma_1 < 2$, there is a simpler proof of Lemma \ref{l:keylemma} using the ideas from \cite{S1}. The result here is more involved because we want an estimate that remains uniform as $\sigma \to 2$.

\begin{proof}
Consider $v := \Phi - u$, where $\Phi$ is the special function constructed in Corollary \ref{c:phi}. We want to apply Theorem \ref{t:abp} (rescaled) to $v$. Note that $\Mp v \geq \Mm \Phi - \Mm (u) \geq -\psi-\eps_0$. Let $\Gamma$ be the concave envelope of $v$ in $B_{6\sqrt{n}}$.

Let $Q_j$ be the family of cubes given by Theorem \ref{t:abp}. We have
\begin{align*}
\max v &\leq C | \grad \Gamma(B_{2 \sqrt{n}}) |^{1/n} \leq \left( \sum_j |\grad \Gamma(\overline Q_j)| \right)^{1/n} \\
& \leq \left( C \sum_j ( \max_{Q_j} (\psi+\eps_0)^+ )^n |Q_j| \right)^{1/n} \\
& \leq C \eps_0 + C \left( \sum_j (\max_{Q_j} \psi^+)^n |Q_j| \right)^{1/n}
\end{align*}

However, since $\max_{Q_3} u \leq 1$ and $\min_{Q_3} \Phi \geq 2$, then $\max v \geq 1$ and we have
\[ 1 \leq C \eps_0 + C \left( \sum_j (\max_{Q_j} \psi^+)^n |Q_j| \right)^{1/n} \]

If we choose $\eps_0$ small enough, this will imply
\[ \frac{1}{2} \leq C \left( \sum_j (\max_{Q_j} \psi^+)^n |Q_j| \right)^{1/n} \]

Recall that $\psi$ is supported in $\overline B_{1/4}$ and it is bounded, thus:
\[
\frac 12 \leq C \left( \sum_{Q_j \cap B_{1/4} \neq \emptyset} |Q_j| \right)^{1/n} 
\]

Which provides a bound below for the sum of the volumes of the cubes $Q_j$ that intersect $B_{1/4}$. 
\begin{equation} \label{e:h1}
\sum_{Q_j \cap B_{1/4} \neq \emptyset} |Q_j| \geq c
\end{equation}

The diameters of all cubes $Q_j$ are bounded by $\ro 2^{\frac{-1}{2-\sigma}}$, which is always smaller than $\ro = 1/(8 \sqrt{n})$. Therefore, every time $Q_j$ intersects $B_{1/4}$, the cube $4\sqrt{n}Q_j$ will be contained in $B_{1/2}$.

Let $M_0 := \min_{B_{1/2}} \Phi$. By Theorem \ref{t:abp}, we have
\begin{equation} \label{e:h2}
 |\{x \in 4 \sqrt{n} Q_j : v(x) \geq \Gamma(x) - C d_j^2 \} | \geq c |Q_j|
\end{equation}
and $C d_j^2<C\rho_0^2$.

Let us consider the cubes $4 \sqrt{n} Q_j$, for every cube $Q_j$ that intersects $B_{1/4}$. It provides an open cover of the union of the corresponding cubes $\overline Q_j$ and it is contained in $B_{1/2}$. We take a subcover with finite overlapping that also covers the union of the original $\overline Q_j$. Combining \eqref{e:h1} with \eqref{e:h2} we obtain
\[ | \{ x \in B_{1/2} : v(x) \geq \Gamma(x) - C \rho_0^2 \} | \geq c \]
Then
\[ | \{ x \in B_{1/2} : u(x) \leq M_0 + C \rho_0^2 \} | \geq c \]

Let $M = M_0 + C \rho_0^2$. Since $B_{1/2} \subset Q_1$, we have
\[ | \{ x \in Q_1 : u(x) \leq M \} | \geq c \]
which finishes the proof.
\end{proof}

Lemma \ref{l:keylemma} is the key to the proof of Harnack inequality. The following Lemma is a consequence of Lemma \ref{l:keylemma} as it is shown in Lemma 4.6 in \cite{CC}. We have intentionally written Lemma \ref{l:keylemma} and the following one identical to their corresponding versions in \cite{CC}.

\begin{lemma}
Let $u$ be as in lemma \ref{l:keylemma}. Then
\[ | \{ u > M^k \} \cap Q_1 | \leq (1-\mu)^k \]
for $k = 1,2,3,\dots$, where $M$ and $\mu$ are as in Lemma \ref{l:keylemma}.

As a consequence, we have that
\[ |\{u \geq t\} \cap Q_1| \leq d t^{-\eps} \qquad \forall t > 0 \]
where $d$ and $\eps$ are positive universal constants.
\end{lemma}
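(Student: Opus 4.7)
The plan is to prove the first estimate by induction on $k$ using a Calder\'on--Zygmund cube decomposition argument in the spirit of Lemma~4.2 of \cite{CC}, and then to deduce the second estimate from the first by interpolating $t$ between consecutive powers of $M$.

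For the base case $k=1$, Lemma \ref{l:keylemma} immediately yields $|\{u > M\}\cap Q_1|\leq 1-\mu$. For the inductive step, set $A := \{u > M^k\}\cap Q_1$ and $B := \{u > M^{k-1}\}\cap Q_1$, so $A\subset B$ and by the inductive hypothesis $|B|\leq (1-\mu)^{k-1}\leq 1-\mu$. The CZ cube lemma reduces the bound $|A|\leq (1-\mu)|B|$ to the following claim: whenever a dyadic sub-cube $Q\subset Q_1$ satisfies $|A\cap Q| > (1-\mu)|Q|$, its dyadic predecessor $\tilde Q$ is contained in $B$.

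I would prove this claim by contradiction and rescaling. Suppose there is a point $x_1\in \tilde Q$ with $u(x_1)\leq M^{k-1}$; writing $Q = Q_r(x_Q)$, introduce
\[
\tilde u(y) := \frac{1}{M^{k-1}}\,u(x_Q + r y).
\]
Nonnegativity of $\tilde u$ is inherited from $u$. The predecessor $\tilde Q$ corresponds under $z\mapsto (z-x_Q)/r$ to a cube of side $2$ centered at a point with coordinates in $\{\pm 1/2\}$, hence sits inside $Q_3$, and the image $y_1 := (x_1-x_Q)/r$ of $x_1$ satisfies $\tilde u(y_1)\leq 1$, so $\inf_{Q_3}\tilde u \leq 1$. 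A direct scaling computation gives $\Mm \tilde u(y) = (r^\sigma/M^{k-1})\,\Mm u(x_Q + r y)$, and since $r\leq 1/2$, $M\geq 1$, and $x_Q + r Q_{4\sqrt{n}}\subset Q_{4\sqrt{n}}$ (using $|x_Q|_\infty\leq 1/2$ and $2r\sqrt{n}\leq\sqrt{n}$), we obtain $\Mm\tilde u\leq \eps_0$ on $Q_{4\sqrt{n}}$. Applying Lemma \ref{l:keylemma} to $\tilde u$ yields $|\{\tilde u\leq M\}\cap Q_1| > \mu$, which rescales to $|\{u\leq M^k\}\cap Q| > \mu |Q|$, contradicting $|A\cap Q| > (1-\mu)|Q|$.

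For the tail bound, given $t\geq 1$ choose $k\geq 0$ with $M^k\leq t < M^{k+1}$; the first estimate then gives
\[
|\{u\geq t\}\cap Q_1|\leq (1-\mu)^k \leq (1-\mu)^{-1} t^{-\eps}, \qquad \eps := \frac{\log\bigl(1/(1-\mu)\bigr)}{\log M},
\]
while for $t<1$ the trivial bound $|Q_1|=1$ is dominated by $d\,t^{-\eps}$ as soon as $d\geq 1$. Choosing $d$ slightly larger than $(1-\mu)^{-1}$ covers both regimes. The main obstacle is the bookkeeping of the rescaling, namely verifying that $\tilde u$ still satisfies $\Mm\tilde u\leq \eps_0$ on the enlarged cube $Q_{4\sqrt{n}}$: this is precisely why Lemma \ref{l:keylemma} was stated on $Q_{4\sqrt{n}}$ instead of $Q_1$. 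The factor $r^\sigma$ is harmless since $r\leq 1$, and no $\sigma$-dependent constant enters the rescaling, so the estimate remains uniform as $\sigma\to 2$.
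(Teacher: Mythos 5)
Your proof is correct and is precisely the Calder\'on--Zygmund growing-ink-spots argument of Lemma~4.6 in \cite{CC}, which is exactly what the paper points to (the paper gives no independent proof, only the citation). The rescaling bookkeeping is right: with $r\leq 1/2$ the scaled supersolution bound $\Mm\tilde u\leq (r^\sigma/M^{k-1})\eps_0\leq\eps_0$ and the inclusion $x_Q+rQ_{4\sqrt n}\subset Q_{4\sqrt n}$ both hold, and the image of the predecessor $\tilde Q$ lands in $Q_3$; the only cosmetic slip is in the tail estimate, where one should take $k$ maximal with $M^k<t$ (so that $\{u\geq t\}\subset\{u>M^k\}$), but this changes nothing beyond the constant $d$.
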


By a standard covering argument we obtain the following theorem.

\begin{thm} \label{t:wharnack0}
Let $u \geq 0$ in $\R^n$, $u(0) \leq 1$, and $\Mm u \leq \eps_0$ in $B_{2}$ (supersolution). Assume $\sigma \geq \sigma_0$ for some $\sigma_0>0$. Then
\[ |\{ u > t \} \cap B_1 | \leq C t^{-\eps} \qquad \text{for every $t>0$.} \]
where the constant $C$ depends on $\lambda$, $\Lambda$, $n$ and $\sigma_0$.
\end{thm}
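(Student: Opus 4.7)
The plan is to first establish a base decay estimate on a small cube near the origin by a direct rescaling of the preceding lemma, and then propagate this estimate across all of $B_1$ via a chain of overlapping cubes.

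Set $r_0 := 1/(8n)$. For the base estimate, define $\tilde u(x) := u(r_0 x)$. Then $\tilde u \geq 0$ in $\R^n$, $\tilde u(0) = u(0) \leq 1$ hence $\inf_{Q_3} \tilde u \leq 1$, and since $r_0 Q_{4\sqrt n}$ has corners at distance $r_0 \cdot 2n = 1/4 < 2$ from the origin, the scaling identity $\Mm \tilde u(x) = r_0^{\sigma} \Mm u(r_0 x)$ gives $\Mm \tilde u \leq r_0^{\sigma} \eps_0 \leq \eps_0$ on $Q_{4\sqrt n}$. Applying the preceding lemma to $\tilde u$ and undoing the scaling yields
\[ |\{u > t\} \cap Q_{r_0}| \leq C \, t^{-\eps}. \]

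To propagate the estimate to $Q_{r_0}(x_1)$ for a general $x_1 \in B_1$, connect $0$ to $x_1$ by a chain $y_0 = 0, y_1, \ldots, y_N = x_1$ with $|y_{i+1}-y_i|_\infty \leq r_0/4$, so that consecutive cubes overlap in a fixed fraction $\delta r_0^n$ and $N$ is bounded in terms of $n$ alone. Inductively suppose $|\{u > t\} \cap Q_{r_0}(y_i)| \leq C_i \, t^{-\eps}$. Choose the universal level $t_* := (2 C_i/(\delta r_0^n))^{1/\eps}$, which forces the set $\{u \leq t_*\}$ to meet $Q_{r_0}(y_i) \cap Q_{r_0}(y_{i+1})$; pick any point $y^{*}$ in this intersection with $u(y^{*}) \leq t_*$. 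Rescale via $v(x) := u(y^{*} + 2 r_0 x)/t_*$. Then $v(0) \leq 1$, and $\Mm v(x) = ((2 r_0)^\sigma/t_*)\,\Mm u(y^{*} + 2 r_0 x) \leq \eps_0$ on $Q_{4\sqrt n}$, because $|y^{*}| \leq 1$ keeps $y^{*} + 2 r_0 Q_{4\sqrt n} \subset B_{3/2} \subset B_2$ and $t_* \geq 1$ makes the prefactor at most $1$. The preceding lemma applied to $v$ followed by unwinding the rescaling yields an estimate of the form $|\{u > \tau\} \cap Q_{2 r_0}(y^{*})| \leq C' t_*^{\eps}\tau^{-\eps}$; since $y^{*} \in Q_{r_0}(y_{i+1})$ implies $Q_{r_0}(y_{i+1}) \subset Q_{2 r_0}(y^{*})$, we conclude $|\{u > \tau\} \cap Q_{r_0}(y_{i+1})| \leq C_{i+1}\, \tau^{-\eps}$ with $C_{i+1}$ equal to a universal multiple of $C_i$.

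Since $N$ is bounded in terms of $n$ only, the final constant $C_N$ remains controlled purely in terms of $n, \sigma_0, \lambda, \Lambda$. A finite covering of $B_1$ by cubes $Q_{r_0}(x_j)$ whose cardinality again depends only on $n$, together with summation of the estimates, completes the proof. The main technical obstacle is the bookkeeping: ensuring that the rescaling at each chain step preserves the hypotheses of the preceding lemma uniformly in $\sigma \geq \sigma_0$. This works because the contractions $r_0, 2 r_0 < 1$ and the normalization $t_*^{-1} \leq 1$ only make the inequality for $\Mm$ easier to satisfy, while the chain centers remaining in $B_1$ keep the rescaled copy of $Q_{4\sqrt n}$ inside $B_2$ at every step.
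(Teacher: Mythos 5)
Your proposal is correct and constitutes precisely the ``standard covering argument'' that the paper invokes without spelling out: a base estimate on a small cube near the origin obtained by rescaling the preceding lemma, followed by a chain of overlapping cubes along which a normalization level $t_*$ is propagated, with the iteration made uniform because the chain length and the number of covering cubes depend only on the dimension. The only minor imprecision is the bound $|y^{*}|\leq 1$ (it is really $|y^{*}|\leq 1 + \sqrt{n}r_0/2$), but the margin in the containment $y^{*}+2r_0 Q_{4\sqrt{n}}\subset B_2$ is large enough that this does not affect the conclusion.
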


Scaling the above theorem we obtain the following version.
\begin{thm} \label{t:wharnack}
Let $u \geq 0$ in $\R^n$ and $\Mm u \leq C_0$ in $B_{2r}$ (supersolution). Assume $\sigma \geq \sigma_0$ for some $\sigma_0>0$. Then
\[ |\{ u > t \} \cap B_r | \leq C r^n (u(0)+C_0 r^\sigma)^\eps t^{-\eps} \qquad \text{for every $t$.} \]
where the constant $C$ depends on $\lambda$, $\Lambda$, $n$ and $\sigma_0$.
\end{thm}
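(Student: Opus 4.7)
The statement is the natural scale-invariant form of Theorem \ref{t:wharnack0}, so the proof is by rescaling and normalization. Given $u \geq 0$ on $\R^n$ with $\Mm u \leq C_0$ in $B_{2r}$, I will build a new function $v$ defined on the unit scale which satisfies the hypotheses of Theorem \ref{t:wharnack0}, apply that theorem, and then translate the distributional estimate back to $u$ by a change of variables.

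\textbf{The rescaling.} Define $v(x) := u(rx)/K$ for a constant $K>0$ chosen below. The crucial computation is the scaling of the extremal operator $\Mm$ on the class $\LI_0$: because each admissible kernel satisfies \eqref{e:uniformellipticity}, which is homogeneous of degree $-(n+\sigma)$ in $y$, a direct change of variables in \eqref{e:Mm} gives
\[
\Mm v(x) \;=\; \frac{r^\sigma}{K}\,\Mm u(rx).
\]
Hence $\Mm v \leq C_0 r^\sigma/K$ in $B_2$, and $v(0)=u(0)/K$. I then pick
\[
K \;:=\; u(0) + \frac{C_0\, r^\sigma}{\eps_0},
\]
so that simultaneously $v(0)\le 1$ and $\Mm v \le \eps_0$ in $B_2$, and $v\ge 0$ on $\R^n$. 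These are exactly the hypotheses of Theorem \ref{t:wharnack0}, which yields
\[
|\{v>s\}\cap B_1|\;\le\; C s^{-\eps}\qquad\text{for every }s>0,
\]
with $C$ depending only on $\lambda,\Lambda,n,\sigma_0$.

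\textbf{Undoing the scaling.} Setting $s=t/K$ and changing variables $y=rx$ gives
\[
|\{u>t\}\cap B_r| \;=\; r^n\,|\{v>t/K\}\cap B_1| \;\le\; C\,r^n K^\eps\,t^{-\eps}.
\]
Substituting the value of $K$ and absorbing the constant $\eps_0^{-\eps}$ into $C$ yields
\[
|\{u>t\}\cap B_r|\;\le\; C\, r^n\bigl(u(0)+C_0 r^\sigma\bigr)^{\eps} t^{-\eps},
\]
which is the claim.

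\textbf{Where the subtlety lies.} There is essentially no obstacle beyond bookkeeping: the only point to verify carefully is the scaling identity for $\Mm$, for which the factor $(2-\sigma)$ in \eqref{e:Mp}--\eqref{e:Mm} combines cleanly with the $|y|^{-n-\sigma}$ weight so that the constants do not blow up as $\sigma\to 2$. One should also note that $u(0)$ is finite because, strictly speaking, it suffices to assume $u$ is lower semicontinuous and $u(0)$ is defined; otherwise one replaces $u(0)$ by $\liminf_{x\to 0}u(x)$. Modulo this, the proof is a direct consequence of Theorem \ref{t:wharnack0}.
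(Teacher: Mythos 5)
Your proposal is correct and is exactly the argument the paper intends: the paper simply states ``Scaling the above theorem we obtain the following version,'' and your rescaling $v(x)=u(rx)/K$ with $K=u(0)+C_0 r^\sigma/\eps_0$, together with the homogeneity identity $\Mm v(x)=\frac{r^\sigma}{K}\Mm u(rx)$ for the scale-invariant class $\LI_0$, is precisely that scaling written out in full.
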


For second order equations, Theorems \ref{t:wharnack0} and \ref{t:wharnack} are referred in the literature as $u$ being in $L^\eps$ (See \cite{CC}). 

\section{Harnack inequality}

Harnack inequality is a very important tool in analysis. In this section we obtain a version for integro-differential equations. Our estimate depends only on a lower bound $\sigma \geq \sigma_0>0$ but it remains uniform as $\sigma \to 2$. In that respect, we can consider this estimate as a generalization of Krylov-Safonov Harnack inequality.

This section is not needed for the rest of the paper because we will prove our regularity results using Theorem \ref{t:wharnack} only. A reader interested only in the regularity results can skip this section.

\begin{thm} \label{t:harnack}
Let $u \geq 0$ in $\R^n$, $\Mm u \leq C_0$ and $\Mp u \geq -C_0$ in $B_2$. Assume $\sigma \geq \sigma_0$ for some $\sigma_0>0$. Then $u(x) \leq C (u(0)+C_0)$ for every $x \in B_{1/2}$.
\end{thm}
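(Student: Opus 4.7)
The plan is to adapt the classical Caffarelli--Cabr\'e derivation of Harnack's inequality from the weak Harnack by an iteration/contradiction argument. The supersolution side $\Mm u \leq C_0$ enters only through Theorem \ref{t:wharnack}; the subsolution side $\Mp u \geq -C_0$ is what prevents $u$ from spiking at interior points.

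After the rescaling $u \mapsto u/(u(0)+C_0)$ composed with a dilation, the claim reduces to showing that if $u\geq 0$ in $\R^n$ with $u(0)\leq 1$, $\Mm u\leq 1$ and $\Mp u\geq -1$ in $B_1$, then $\sup_{B_{1/4}}u\leq C$ universally. Suppose otherwise, and fix an exponent $\nu>n/\eps$ with $\eps$ from Theorem \ref{t:wharnack}. Maximize the product $u(x)(1/2-|x|)^\nu$ over $B_{1/2}$ at a point $x_0$; setting $d=1/2-|x_0|$ gives $u(x_0)=Md^{-\nu}$ with $M$ arbitrarily large, and by the maximal property $u\leq 2^\nu u(x_0)$ on $B_{d/2}(x_0)$. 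I then construct inductively a sequence $x_0,x_1,\dots$ with $u(x_{k+1})\geq \theta\, u(x_k)$ for a fixed $\theta>1$ and displacements $|x_{k+1}-x_k|\leq c\,u(x_k)^{-\eps/n}$. Since $u(x_k)\geq \theta^k u(x_0)$, the displacements form a convergent geometric series of total length $\lesssim u(x_0)^{-\eps/n}<d/4$ once $u(x_0)$ is large; hence the $x_k$ accumulate at some $\bar x$ near $x_0$ while $u(x_k)\to\infty$, contradicting the super-level-set bound $|\{u>t\}\cap B_1|\leq Ct^{-\eps}$ supplied by Theorem \ref{t:wharnack}.

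The crux is the inductive bump: for universal constants $\theta>1$ and $c>0$, any point $x_k$ with $u(x_k)=K$ sufficiently large must satisfy $\sup_{B_{cK^{-\eps/n}}(x_k)} u\geq \theta K$. Assuming the contrary, the rescaled function $\tilde u(y)=u(x_k+cK^{-\eps/n}y)/K$ obeys $\tilde u(0)=1$, $\tilde u\leq \theta$ on $B_1$, $\tilde u\geq 0$ in $\R^n$, and $\Mp \tilde u\geq -\eta$ with $\eta=C_0(cK^{-\eps/n})^\sigma/K$ arbitrarily small for $K$ large. The complementary function $v=\theta-\min(\tilde u,\theta)$ is nonnegative in $\R^n$, equals $\theta-1$ at the origin, and satisfies $\Mm v\leq \eta+\mathcal{E}$, where $\mathcal{E}$ is an error coming from capping $\tilde u$ at height $\theta$; applying Theorem \ref{t:wharnack} to $v$ at the origin yields $|\{v>t\}\cap B_{1/2}|\leq C(\theta-1)^\eps t^{-\eps}$, which, for $\theta$ chosen just above the universal threshold coming from Lemma \ref{l:keylemma}, is inconsistent with the forced lower bound on the measure of $\{v\geq \theta-1/2\}$ in $B_{1/2}$ implied by $v(0)=\theta-1$. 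The main obstacle is making this bump quantitative and uniform in $\sigma\in(\sigma_0,2)$: one must estimate the truncation error $\mathcal{E}$ via the kernel bound $K(y)\lesssim (2-\sigma)|y|^{-n-\sigma}$, and one relies on $u\geq 0$ on all of $\R^n$ to keep the tails of $v$ outside $B_1$ bounded by $\theta$. Once the bump is secured the geometric iteration closes the argument.
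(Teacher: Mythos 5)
Your overall scheme mirrors the Caffarelli--Cabr\'e template that the paper also follows: start from the weak Harnack estimate (Theorem \ref{t:wharnack}), find a high point $x_0$ by maximizing $u$ against a weight that blows up at the boundary, and derive a contradiction by combining an upper bound on $|\{u>u(x_0)/2\}|$ from the supersolution side with a complementary bound from the subsolution side applied to $(\mathrm{const})-u$. Your packaging is slightly different --- you iterate a ``bump'' lemma to produce a sequence $x_k$ with geometrically growing values, whereas the paper fixes a single contact point of $u$ with $h_t(x)=t(1-|x|)^{-\gamma}$ and reaches a contradiction there directly --- but these two routes are essentially interchangeable.

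The genuine gap is that you do not control the truncation error $\mathcal{E}$. Your $v=\theta-\min(\tilde u,\theta)$ is nonnegative and bounded, but $\Mm v$ is not small just because $\Mp\tilde u\geq-\eta$: the capping produces the additional term
\[
\mathcal{E}(x)\ \lesssim\ (2-\sigma)\int_{\R^n}\frac{(\tilde u(x+y)-\theta)^{+}+(\tilde u(x-y)-\theta)^{+}}{|y|^{n+\sigma}}\,\dd y ,
\]
which involves the \emph{tail} of $\tilde u$ where it exceeds $\theta$, i.e.\ the values of the original $u$ far from $x_k$, where you have no bound beyond $u\geq 0$. Your remark that ``$u\geq 0$ on all of $\R^n$ keeps the tails of $v$ outside $B_1$ bounded by $\theta$'' addresses the boundedness of $v$, not of $\mathcal{E}$; the two are different, and $\mathcal{E}$ is exactly where the argument would break without further input. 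This is the main technical difficulty that the paper resolves: it touches $u$ from below by the paraboloid $g_\tau(x)=\tau(1-|4x|^2)$ at a point $x_1\in B_{1/4}$, uses the subsolution inequality $\Mm u(x_1)\leq 1$ together with the paraboloid to bound $(2-\sigma)\int \si(u,x_1,y)^{-}/|y|^{n+\sigma}\,\dd y$, and hence deduces a quantitative tail bound $(2-\sigma)\int (u(x_1+y)-2)^{+}/|y|^{n+\sigma}\,\dd y\leq C$. Only with this tail estimate can one bound $\Mm w$ after truncation (the paper obtains $\Mm w\leq C(\theta r)^{-n-\sigma}$). Without an analogue of that step your ``bump'' lemma is not established.

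A secondary issue is the phrasing of the contradiction inside the bump: the lower bound on $|\{v\geq\theta-1/2\}\cap B_{1/2}|$ cannot be ``implied by $v(0)=\theta-1$''; it must come from the weak Harnack estimate for $\tilde u$ itself, i.e.\ from $|\{\tilde u>1/2\}\cap B_1|\leq C\,2^{\epsilon}K^{-\eps}$ being a small \emph{fraction} of $|B_r(x_k)|\sim c^n K^{-\eps}$ when $c$ is large. Once you state it that way the contradiction with the weak Harnack bound for $v$ (using $v(0)=\theta-1$ small) is coherent. But fixing this phrasing still leaves the truncation estimate open, and that is where the argument currently fails.
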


\begin{proof}
Dividing by $u(0)+C_0$, it is enough to consider $u(0) \leq 1$ and $C_0=1$.

Let $\eps>0$ be the one from Theorem \ref{t:wharnack}. Let $\gamma = n/\eps$. Let us consider the minimum value of $t$ such that
\[ u(x) \leq h_t(x) := t (1-|x|)^{-\gamma} \text { for every } x \in B_1. \]
There must be an $x_0 \in B_1$ such that $u(x_0) = h_t(x_0)$, otherwise we could make $t$ smaller. Let $d = (1-|x_0|)$ be the distance from $x_0$ to $\bdary B_1$.

For $r=d/2$, we want to estimate the portion of the ball $B_r(x_0)$ covered by $\{u < u(x_0)/2\}$ and by $\{u > u(x_0)/2\}$. We will show that $t$ cannot be too large. In this way we obtain the result of the theorem, since the upper bound $t<C$ implies that $u(x) < C (1-|x|)^{-\gamma}$.

Let us first consider $A := \{u > u(x_0)/2\}$. By the $L^\eps$ estimate (Theorem \ref{t:wharnack0}) we have
\begin{align*}
|A \cap B_1| &\leq C \abs{\frac{2}{u(x_0)}}^{\eps} \\
&\leq C t^{-\eps} d^n
\end{align*}

Whereas $|B_r| = C d^n$, so if $t$ is large, $A$ can cover only a small portion of $B_r(x_0)$ at most.
\begin{equation} \label{e:a0}
\abs{ \{u>u(x_0)/2\} \cap B_r(x_0) } \leq C t^{-\eps} \abs{B_r}
\end{equation}

In order to get a contradiction, we will show that $\abs{\{u<u(x_0)/2\} \cap B_r(x_0)} \leq (1-\delta) B_r$ for a positive constant $\delta$ independent of $t$. 

We estimate $\abs{\{u<u(x_0)/2\} \cap B_{\theta r} (x_0)}$ for $\theta>0$ small. For every $x \in B_{\theta r} (x_0)$ we have $u(x) \leq h_t(x) \leq (d-\theta d/2)^{-\gamma} \leq u(x_0) (1-\theta/2)^{-\gamma}$, with $(1-\theta/2)^{-\gamma}$ close to one.

Let us consider 
\[ v(x) = (1-\theta/2)^{-\gamma} u(x_0) - u(x) \]
so that $v \geq 0$ in $B_{\theta r}$, and also $\Mm v \leq 1$ since $\Mp u \geq -1$. We would want to apply Theorem \ref{t:wharnack} to $v$. The only problem is that $v$ is not positive in the whole domain but only on $B_{\theta r}$. In order to apply Theorem \ref{t:wharnack} we have to consider $w=v^+$ instead, and estimate the change in the right hand side due to the truncation error.

We want to find an upper bound for $\Mm w = \Mm v^+$ instead of $\Mm v$. We know that
\[
\Mm v(x) = (2-\sigma) \int_{\R^n} \frac{\lambda \si(v,x,y)^+ - \Lambda \si(v,x,y)^-}{|y|^{n+\sigma}} \dx 
\leq 1.
\]

Therefore
\begin{align}
\Mm w &= (2-\sigma) \int_{\R^n} \frac{\lambda \si(w,x,y)^+ - \Lambda \si(w,x,y)^-}{|y|^{n+\sigma}} \dx \\
& \leq 1 + (2-\sigma) \int_{\R^n \cap \{v(x+y)<0\}} -\Lambda \frac{v(x+y)}{|y|^{n+\sigma}} \dx \\
& \leq 1 + (2-\sigma) \int_{\R^n \setminus B_{\theta r}(x_0)} \Lambda \frac{(u(x+y)-(1-\theta/2)^{-\gamma} u(x_0))^+}{|y|^{n+\sigma}} \dx \label{e:aa1}
\end{align}

Notice that the restriction $u \geq 0$ does not provide an upper bound for this last expression. We must obtain it in a different way.

Let us consider the largest value $\tau>0$ such that $u(x) \geq g_\tau := \tau (1-|4x|^2)$. There must be a point $x_1 \in B_{1/4}$ such that $u(x_1) = \tau (1-|4x_1|^2)$. The value of $\tau$ cannot be larger than $1$ since $u(0) \leq 1$. Thus we have the upper bound
\begin{align}
(2-\sigma) &\int_{\R^n} \frac{\si(u,x_1,y)^-}{|y|^{n+\sigma}} \dx \\
&\leq (2-\sigma) \int_{\R^n} \frac{\si(g_\tau,x_1,y)^-}{|y|^{n+\sigma}} \dx \leq C
\end{align}
for a constant $C$ that is independent of $\sigma$.

Since $\Mm u(x_1) \leq 1$, then
\[ (2-\sigma) \int_{\R^n} \frac{\si(u,x_1,y)^+}{|y|^{n+\sigma}} \dx \leq C \ .\]

In particular since $u(x_1) \leq 1$ and $u(x_1-y) \geq 0$,
\[ (2-\sigma) \int_{\R^n} \frac{(u(x_1+y)-2)^+}{|y|^{n+\sigma}} \dx \leq C \ .\]

We can use the inequality above to estimate \eqref{e:aa1}. We can assume $u(x_0) > 2$, since otherwise $t$ would not be large.

\begin{align*}
(2-\sigma) &\int_{\R^n \setminus B_{\theta r}(x_0)} \Lambda \frac{(u(x+y)-(1-\theta/2)^{-\gamma} u(x_0))^+}{|y|^{n+\sigma}} \dx \\
&\leq (2-\sigma) \int_{\R^n \setminus B_{\theta r}(x_0)} \Lambda \frac{(u(x_1+y+x-x_1)-(1-\theta/2)^{-\gamma} u(x_0))^+}{|y+x-x_1|^{n+\sigma}} \ \frac{|y+x-x_1|^{n+\sigma}}{|y|^{n+\sigma}} \dd y \\
&\leq C (\theta r)^{-n-\sigma}
\end{align*}

So finally we obtain
\[
\Mm w \leq C (\theta r)^{-n-\sigma}
\]

Now we can apply Theorem \ref{t:wharnack} to $w$ in $B_{\theta r}$. Recall $w(x_0) =((1-\theta/2)^{-\gamma}-1)u(x_0)$, we have
\begin{align*}
\left\vert \set{ u < \frac{u(x_0)}{2} } \cap B_{\frac{\theta r}{2}} \right\vert &= | \{ w > u(x_0) ((1-\theta/2)^{-\gamma}-1/2) \} \cap B_{\theta r /2} | \\
\leq C (\theta r)^n &\left( ((1-\theta/2)^{-\gamma}-1 )u(x_0)+C (\theta r)^{-n-\sigma} (r\theta)^\sigma \right)^\eps \left( u(x_0) ((1-\theta/2)^{-\gamma}- \frac 1 2 ) \right)^{-\eps} \\
\leq C (\theta r)^n &\left( ((1-\theta/2)^{-\gamma}-1)^\eps + \theta^{-n \eps} t^{-\eps} \right)
\end{align*}

Now let us choose $\theta>0$ so that the first term is small:
\[ C (\theta r)^n ((1-\theta/2)^{-\gamma}-1)^\eps \leq \frac{1}{4} \abs{B_{\theta r/2}} \ . \]

Notice that the choice of $\theta$ is independent of $t$. For this fixed value of $\theta$ we observe that if $t$ is large enough, we will also have
\[ C (\theta r)^n \theta^{-n \eps} t^{-\eps} \leq \frac{1}{4} \abs{B_{\theta r/2}} \]
and therefore
\[ | \{ u < u(x_0)/2 \} \cap B_{\theta r /2} | \leq \frac{1}{2} \abs{B_{\theta r/2}} \]
which implies that for $t$ large
\[ | \{ u > u(x_0)/2 \} \cap B_{\theta r /2} | \geq c \abs{B_r} \ . \]
But this contradicts \eqref{e:a0}. Therefore $t$ cannot be large and we finish the proof.
\end{proof}

\section{H\"older estimates.}

The purpose of this section is to prove the following H\"older regularity result.

\begin{thm} \label{t:ca}
Let $\sigma>\sigma_0$ for some $\sigma_0>0$. Let $u$ be bounded function in $\R^n$, such that
\begin{align*}
\Mp u &\geq -C_0 \qquad \text{in } B_1 \\
\Mm u &\leq C_0 \qquad \text{in } B_1
\end{align*}
then there is an $\alpha > 0$ (depending only on $\lambda$, $\Lambda$, $n$ and $\sigma_0$) such that $u \in C^\alpha(B_{1/2})$ and
\[ u_{C^\alpha(B_{1/2})} \leq C \big( \sup_{\R^n} |u| + C_0 \big) \]
for some constant $C>0$.
\end{thm}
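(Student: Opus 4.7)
The plan is to establish $C^\alpha$ regularity by the Krylov--Safonov iterative oscillation-decay argument, using Theorem \ref{t:wharnack} as the sole analytic input. By rescaling (dividing $u$ by a suitable multiple of $\sup_{\R^n}|u|+C_0$), I reduce to $\|u\|_{L^\infty(\R^n)}\leq 1/2$ and $C_0$ smaller than any fixed universal constant. Translation invariance then reduces the statement to bounding $|u(x)-u(0)|$, and for this it suffices to construct inductively a nondecreasing sequence $(m_k)$ and a nonincreasing sequence $(M_k)$ with
\[m_k\leq u\leq M_k\text{ on }B_{4^{-k}},\qquad M_k-m_k=4^{-\alpha k},\]
for some universal $\alpha\in(0,\sigma_0)$. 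The base case is arranged with $m_0=-1/2$, $M_0=1/2$.

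For the inductive step I transplant to $B_1$ via the rescaling $v(y):=2\cdot 4^{\alpha k}\bigl(u(4^{-k}y)-\tfrac12(m_k+M_k)\bigr)$. Collating the inductive hypothesis across all previous scales gives $|v|\leq 1$ on $B_1$, the annular growth bound $|v(y)|\leq 2\cdot 4^{\alpha j}$ for $|y|\leq 4^j$ with $j\leq k$, and the global bound $\|v\|_{L^\infty(\R^n)}\leq 4\cdot 4^{\alpha k}$; scaling the kernels gives $\Mm v\leq 2\cdot 4^{-(\sigma-\alpha)k}C_0\leq 2C_0$ and $\Mp v\geq -2C_0$ on $B_1$, uniformly in $k$ as long as $\alpha<\sigma_0$. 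By symmetry of the class $\LI_0$ I may replace $v$ by $-v$ and assume $|\{v\leq 0\}\cap B_1|\geq|B_1|/2$. The inductive step is complete once I show $v\leq 1-\theta$ on $B_{1/4}$ for a universal $\theta>0$: setting $M_{k+1}:=M_k-(\theta/2)(M_k-m_k)$, $m_{k+1}:=m_k$, and picking $\alpha$ so that $1-\theta/2\leq 4^{-\alpha}$ then closes the induction.

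To produce this oscillation decay I apply the weak Harnack (Theorem \ref{t:wharnack}) to the truncation $w:=(1-v)^+$, which is nonnegative on $\R^n$ and satisfies $|\{w\geq 1\}\cap B_1|\geq|B_1|/2$. The key nonlocal computation is that, for $x\in B_{3/4}$ and any $L\in\LI_0$,
\[Lw(x)=L(1-v)(x)+2\int_{\R^n}(v-1)^+(x+y)\,K(y)\,\dd y,\]
so the only discrepancy between $\Mm w$ and $\Mm(1-v)=-\Mp v$ is a tail integral supported on $\{x+y\not\in B_1\}$. Pairing the annular bound $|v|\leq 2\cdot 4^{\alpha j}$ on $B_{4^j}\setminus B_{4^{j-1}}$ with the kernel decay $K(y)\leq(2-\sigma)\Lambda|y|^{-n-\sigma}$ estimates this tail by a geometric series
\[C(2-\sigma)\sum_{j\geq 1}4^{-(\sigma-\alpha)j}\leq C_T,\]
a universal constant uniform in $\sigma\in[\sigma_0,2)$ because the $(2-\sigma)$ prefactor cancels the would-be blowup of the sum as $\sigma\to 2$. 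Consequently $\Mm w\leq 2C_0+2C_T=:C_1$ on $B_{3/4}$, and $\Mp w\geq -2C_0$ there. Applying Theorem \ref{t:wharnack} to $w$ and combining the upper distributional bound it produces with the lower measure bound $|\{w\geq 1\}\cap B_1|\geq|B_1|/2$, through the same covering/barrier mechanism (based on Lemma \ref{l:keylemma} and the special function $\Phi$ of Corollary \ref{c:phi}) that drives the proof of Theorem \ref{t:wharnack0}, yields the universal pointwise lower bound $w\geq\theta$ on $B_{1/4}$, i.e., $v\leq 1-\theta$ there.

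The main obstacle is the nonlocal tail: $v$ is $O(1)$ on $B_1$ but grows like $4^{\alpha j}$ on $B_{4^j}$, and without the two balancing mechanisms---choosing $\alpha<\sigma_0$ to make the tail series convergent, and the built-in $(2-\sigma)$ factor of $\LI_0$ to keep things uniform as $\sigma\to 2$---the tail constant $C_T$ would not be universal, the induction would fail to close, and the exponent $\alpha$ would degenerate as $\sigma\to 2$. The delicate balance between these two effects is exactly what yields a H\"older exponent depending only on $\sigma_0$ and estimates that survive the limit $\sigma\uparrow 2$.
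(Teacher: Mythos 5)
Your proof follows the same iterative oscillation-decay scheme as the paper's (rescale to unit scale, truncate to apply the weak Harnack inequality, and close the induction), but there is a genuine gap in the way you estimate the nonlocal tail created by the truncation. You bound the truncation error through $|v|\leq 2\cdot 4^{\alpha j}$ on $B_{4^j}$, which gives $(v-1)^+\leq 2\cdot 4^{\alpha j}$ on the $j$-th annulus and hence a tail constant of the form
$C_T\approx (2-\sigma)\Lambda\sum_{j\geq 1}4^{-(\sigma-\alpha)j}$.
This quantity is indeed uniformly bounded for $\sigma\in[\sigma_0,2)$ and $\alpha<\sigma_0$, but it does \emph{not} tend to zero as $\alpha\to 0$ (in fact it converges to a strictly positive limit of size $\sim\Lambda$), and your argument needs it to be small, not merely bounded. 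The weak Harnack step produces a lower bound of the form $w(x)\geq c_1 - C\,C_1$, so once $C_0$ is normalized to be small you still need $C_T$ below a universal threshold; a universal-but-large $C_T$ (e.g.\ when $\Lambda$ is large or $\sigma_0$ is small) breaks the conclusion and the induction fails to close.

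The paper's proof avoids this by using a strictly sharper consequence of the inductive hypothesis: after rescaling, the truncation error on the $j$-th annulus is controlled by $2(4^{\alpha j}-1)$, not $2\cdot 4^{\alpha j}$ (one gets this from $m_{k-j}-m_k\geq m_{k-j}-M_{k-j}+M_k-m_k=-4^{-\alpha(k-j)}+4^{-\alpha k}$, which after normalization yields $v\geq 2(1-4^{\alpha j})$ on $B_{4^j}$; see the chain of inequalities preceding ``Therefore $v(x)\geq -2(|4x|^\alpha-1)$ outside $B_1$''). The resulting tail sum is $\sum_{j\geq 1}(4^{\alpha j}-1)4^{-\sigma j}=\tfrac{4^{\alpha-\sigma}}{1-4^{\alpha-\sigma}}-\tfrac{4^{-\sigma}}{1-4^{-\sigma}}$, which is $O(\alpha)$ for small $\alpha$ uniformly in $\sigma\geq\sigma_0$. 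It is precisely this $\alpha$-dependent smallness that lets you absorb the tail into the $\eps_0$ budget of the weak Harnack inequality and close the induction; you need to replace your annular bound with this sharper one. Two smaller remarks: the measure dichotomy should be stated on $B_{1/4}$ (the image of $B_{4^{-k-1}}$ after rescaling), not on $B_1$, since the weak Harnack estimate is applied on balls $B_{1/2}(x)$ with $x\in B_{1/4}$, which contain $B_{1/4}$ but not $B_1$; and the sum $\sum_{j\geq 1}4^{-(\sigma-\alpha)j}$ does not blow up as $\sigma\to 2$, so the role of the $(2-\sigma)$ prefactor is not what you describe — it instead makes the tail vanish in the limit $\sigma\to 2$.
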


Even though this result could be obtained as a consequence of the Harnack inequality, we will prove it using only Theorem \ref{t:wharnack}. We do it in this way because it looks potentially simpler to generalize since we proved the Harnack inequality (Theorem \ref{t:harnack}) using Theorem \ref{t:wharnack}.

Theorem \ref{t:ca} follows from the following Lemma by a simple scaling.

\begin{lemma}
Let $\sigma>\sigma_0$ for some $\sigma_0>0$. Let $u$ be a function such that
\begin{align*}
-1/2 &\leq u \leq 1/2 && \text{in } \R^n \\
\Mp u &\geq -\eps_0 && \text{in } B_1 \\
\Mm u &\leq \eps_0 && \text{in } B_1
\end{align*}
then there is an $\alpha > 0$ (depending only on $\lambda$, $\Lambda$, $n$ and $\sigma_0$) such that $u \in C^\alpha$ at the origin. More precisely
\[ |u(x) - u(0)| \leq C |x|^\alpha \]
for some constant $C$.
\end{lemma}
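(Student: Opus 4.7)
The plan is to iterate a one-step oscillation-decay lemma on a geometric sequence of balls $B_{R^{-k}}$, in the spirit of Krylov--Safonov but adapted to the nonlocal setting so that the growth of $v$ outside $B_1$ is controlled inductively. Fix $R>1$ large and $\alpha\in(0,\sigma_0)$ small (both to be determined in terms of $\sigma_0,\lambda,\Lambda,n$), and set $\mu:=R^{-\alpha}$. I will construct nondecreasing $m_k$ and nonincreasing $M_k$ with $M_k-m_k=2\mu^k$ and $m_k\le u\le M_k$ on $B_{R^{-k}}$. The base case $k=0$ holds (with some slack) because $|u|\le 1/2$. Once built, for $R^{-k-1}\le|x|<R^{-k}$ one gets $|u(x)-u(0)|\le 2\mu^k\le C|x|^\alpha$.

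For the inductive step, rescale by $v(x):=2(u(R^{-k}x)-(M_k+m_k)/2)/\mu^k$, so $|v|\le 1$ on $B_1$. Using that $\Mp,\Mm$ are homogeneous of degree $\sigma\ge\sigma_0$, one has $\Mp v\ge -2\eps_0(R^\sigma\mu)^{-k}$ and $\Mm v\le 2\eps_0(R^\sigma\mu)^{-k}$ in $B_{R^k}$; choosing $R^{\sigma_0}\ge 1/\mu$ makes these bounded by $2\eps_0$. A telescoping estimate using that the shift of centres satisfies $|(M_{i+1}+m_{i+1})-(M_i+m_i)|\le (1-\mu)\mu^i$ and the inductive bound at scale $R^{-(k-j-1)}$, yields tail control $|v(x)|\le C_*|x|^\alpha$ for $|x|\ge 1$.

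The heart of the argument is the oscillation-decay lemma: under the above hypotheses on $v$, the oscillation of $v$ on $B_{1/R}$ is at most $2\mu$, provided $\eps_0$ is small enough. By symmetry I may assume $|\{v\le 0\}\cap B_{1/2}|\ge \tfrac{1}{2}|B_{1/2}|$. Consider $w:=(1-v)^+$, which equals $1-v$ on $B_1$ (since $v\le 1$ there), is $\ge 0$ on all of $\R^n$, and satisfies $|\{w\ge 1\}\cap B_{1/2}|\ge \tfrac{1}{2}|B_{1/2}|$. To apply Theorem \ref{t:wharnack} to $w$ I must bound $\Mm w$ in, say, $B_{3/4}$. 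For any $L\in\LI_0$ and $x\in B_{3/4}$,
\[
Lw(x)=L(1-v)(x)+\int_{\R^n}(v(y)-1)^+\,K_L(y-x)\,\dd y,
\]
because the correction $w-(1-v)=(v-1)^+$ vanishes on $B_1$. Using $\Mm(1-v)=-\Mp v\le 2\eps_0$ and the tail bound
\[
(2-\sigma)\Lambda\int_{|y|\ge 1}\frac{(v(y)-1)^+}{|y-x|^{n+\sigma}}\,\dd y\le \frac{C_*\,(2-\sigma)}{\sigma-\alpha}\le \bar C,
\]
which holds uniformly in $\sigma\in(\sigma_0,2)$ precisely because $\alpha<\sigma_0$, I obtain $\Mm w\le 2\eps_0+\bar C=:C_0$ in $B_{3/4}$.

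Now Theorem \ref{t:wharnack} gives, for every $x_0\in B_{1/R}$ with $R\ge 4$ and $r=1/8$,
\[
|\{w>t\}\cap B_r(x_0)|\le C r^n\bigl(w(x_0)+C_0 r^\sigma\bigr)^\eps t^{-\eps}.
\]
Combining this with $|\{w\ge 1\}\cap B_{1/2}|\ge c_0$ via a standard covering of $B_{1/2}$ by balls of radius $r$ and a Vitali/iteration step produces a pointwise lower bound $w(x_0)\ge\theta>0$ that is uniform in $k$ and in $\sigma\ge\sigma_0$. Hence $v\le 1-\theta$ on $B_{1/R}$, and setting $M_{k+1}:=(M_k+m_k)/2+(1-\theta)\mu^k/2$, $m_{k+1}:=m_k$, $\mu:=1-\theta/2$, closes the induction. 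The main obstacle is the nonlocal tail: the growth $|v(y)|\lesssim|y|^\alpha$ outside $B_1$, while integrable for $\alpha<\sigma_0$, contributes an $O(1)$ term to $\Mm w$ that \emph{cannot} be made small by shrinking $\eps_0$. Closing the induction therefore requires delicate balancing of $R$, $\mu$, $\alpha$, and $\eps_0$ so that the inductive constants $C_*$, $\bar C$, and $\theta$ propagate consistently across scales.
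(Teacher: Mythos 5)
Your strategy is the same as the paper's: set up a Krylov--Safonov-type oscillation iteration and, at each scale, feed the truncated positive part of the rescaled solution into the $L^\eps$ lemma (Theorem \ref{t:wharnack}). The set-up (sequences $m_k, M_k$, rescaled $v$, truncation $w$, degenerate right-hand side from the nonlocal tail) matches the paper's Lemma proof, up to the cosmetic change of using $(1-v)^+$ instead of $v^+$ and a base factor $R$ instead of $4$.

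However, there is a genuine gap, and you have in fact noticed it yourself. You record the inductive tail control as $|v(x)|\le C_*|x|^\alpha$ for $|x|\ge 1$, and from this you can only get
\[
(2-\sigma)\Lambda\int_{|y|\ge 1}\frac{(v(y)-1)^+}{|y-x|^{n+\sigma}}\dd y\le \bar C
\]
with $\bar C$ a fixed $O(1)$ constant. You then observe that this contributes an $O(1)$ term to $\Mm w$ that \emph{cannot} be absorbed by shrinking $\eps_0$, and you close with a hopeful but unsubstantiated remark about ``delicate balancing.'' As written, this really does break the argument: in the final application of Theorem \ref{t:wharnack} you get $C\bigl(w(x_0)+C_0 r^\sigma\bigr)^\eps\ge c$ with $C_0=2\eps_0+\bar C$, and since $\bar C r^\sigma$ is not small this inequality is vacuous and gives no lower bound $w\ge\theta>0$.

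The resolution, which is exactly what the paper exploits, is that the inductive hypothesis actually gives a \emph{sharper} tail bound than $|v|\lesssim|x|^\alpha$. Since $m_{k-j}\le u\le M_{k-j}$ on $B_{R^{-(k-j)}}$, with $m_k$ nondecreasing, $M_k$ nonincreasing and $M_k-m_k=2\mu^k$, one finds after rescaling that for $|x|\le R^j$
\[
\text{(excess of $v$ above }1\text{)}\quad (v(x)-1)^+\le 2\bigl(\mu^{-j}-1\bigr)=2\bigl(R^{\alpha j}-1\bigr),
\]
and similarly $(-1-v(x))^+\le 2(R^{\alpha j}-1)$, so
\[
(|v(x)|-1)^+ \le 2\bigl(|x|^\alpha-1\bigr)\qquad\text{for }|x|\ge 1.
\]
The key feature is the ``$-1$'': the excess of $v$ over the unit interval vanishes identically as $\alpha\to 0$ (pointwise and, by dominated convergence for $\alpha\le\sigma_0/2$, in the weighted $L^1$ sense against $(2-\sigma)|y|^{-n-\sigma}$). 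Therefore the tail contribution to $\Mm w$ can be made $\le\eps_0$, not merely bounded, by taking $\alpha$ small enough (depending only on $\lambda,\Lambda,n,\sigma_0$). This is the step ``$\Mm w\le\Mm v+\eps_0$ in $B_{3/4}$ if $\alpha$ is small enough'' in the paper. With this improvement, $C_0=O(\eps_0)$, the lower bound $w\ge\theta>0$ in $B_{1/R}$ follows with a universal $\theta$ once $\eps_0$ is chosen small, and the induction closes by choosing $\alpha$ to satisfy $1-\theta/2=R^{-\alpha}$. In short: your argument is on the right track and structurally identical to the paper's, but you must carry the ``$-1$'' through the inductive tail estimate — without it the crucial smallness in $\alpha$ is lost and the iteration does not close.
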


\begin{proof}
We will show that there exists sequences $m_k$ and $M_k$ such that $m_k \leq u \leq M_k$ in $B_{4^{-k}}$ and 
\begin{equation} \label{e:al}
M_k - m_k = 4^{-\alpha k} 
\end{equation}
so that the theorem holds with $C= 4^\alpha$.

For $k=0$ we choose $m_0=-1/2$ and $M_0=1/2$. By assumption we have $m_0 \leq u \leq M_0$ in the whole space $\R^n$. We want to construct the sequences $M_k$ and $m_k$ by induction.

Assume we have the sequences up to $m_k$ and $M_k$. We want to show we can continue the sequences by finding $m_{k+1}$ and $M_{k+1}$. 

In the ball $B_{4^{-k-1}}$, either $u \geq (M_k + m_k)/2$ in at least half of the points (in measure), or $u \leq (M_k + m_k)/2$ in at least half of the points. Let us say that $\abs{ \{u \geq (M_k + m_k)/2 \} \cap B_{4^{-k-1}} } \geq \abs{B_{4^{-k-1}}}/2$.

Consider 
\[ v(x) := \frac{u(4^{-k} x) - m_k}{(M_k-m_k)/2} \]
so that $v(x) \geq 0$ in $B_1$ and $\abs{ \{v \geq 1 \} \cap B_{1/4} } \geq \abs{B_{1/4}}/2$. Moreover, since $\Mm u \leq \eps_0$ in $B_1$,
\[ \Mm v \leq \frac{4^{-k \sigma} \eps_0}{(M_k-m_k)/2} = \eps_0 4^{k (\sigma-\alpha)} \leq \eps_0 \] if $\alpha$ is chosen less than $\sigma$.

From the inductive hypothesis, for any $j \geq 1$, we have
\[ v \geq \frac{(m_{k-j}-m_k)}{(M_k-m_k)/2} \geq \frac{(m_{k-j}- M_{k-j} + M_k - m_k)}{(M_k-m_k)/2} \geq -2 \cdot 4^{\alpha j}+2 \geq 2 (1 - 4^{\alpha j}) \qquad \text{in } B_{2^j} \]

Therefore $v(x) \geq -2 (|4x|^\alpha-1)$ outside $B_1$. If we let $w(x) = \max(v,0)$, then $\Mm w \leq \Mm v + \eps_0$ in $B_{3/4}$ if $\alpha$ is small enough. We still have $\abs{ \{w \geq 1 \} \cap B_1 } \geq \abs{B_1}/2$. Given any point $x \in B_{1/4}$, can can apply Theorem \ref{t:wharnack} in $B_{1/2}(x)$ to obtain
\[ C(w(x) + 2 \eps_0)^\eps \geq |\{w>1\} \cap B_{1/2}(x)| \geq \frac{1}{2} |B_{1/4}| \ . \]

If we have chosen $\eps_0$ small, this implies that $w \geq \theta$ in $B_{1/4}$ for some $\theta>0$. Thus if we let $M_{k+1} = M_k$ and $m_{k+1} = m_k + \theta (M_k-m_k)/2$ we have $m_{k+1} \leq u \leq M_{k+1}$ in 
$B_{2^{k+1}}$. Moreover $M_{k+1} - m_{k+1} = (1 - \theta/2) 2^{-\alpha k}$. So we must choose $\alpha$ and $\theta$ small and so that $(1 - \theta/2) = 4^{-\alpha}$ and we obtain $M_{k+1} - m_{k+1} = 4^{-\alpha (k+1)}$

On the other hand, if $\abs{ \{u \leq (M_k + m_k)/2 \} \cap B_{4^{-k}} } \geq \abs{B_{4^{-k}}}/2$, we define
\[ v(x) := \frac{M_k - u(4^{-k} x)}{(M_k-m_k)/2} \]
and continue in the same way using that $\Mp u \geq -\eps_0$.
\end{proof}

\section{$C^{1+\alpha}$ estimates.}
\label{s:c1a}
In this section we prove an interior $C^{1,\alpha}$ regularity result for the solutions to a general class of fully nonlinear integro-differential equations. The idea of the proof is to use the H\"older estimates of Theorem \ref{t:ca} to incremental quotients of the solution. There is a difficulty in that we have no uniform bound in $L^\infty$ for the incremental quotients outside of the domain. This becomes an issue since we are dealing with nonlocal equations. The way we solve it is by assuming some extra regularity of the family of integral operators $\LI$. The extra assumption, compared to the assumptions for H\"older regularity \eqref{e:uniformellipticity}, is a modulus of continuity of $K$ in measure, so as to make sure that far away oscillations tend to cancel out.

Given $\rho_0>0$, we define the class $\LI_1$ by the operators $L$ with kernels $K$ such that
\begin{align} 
(2-\sigma)\frac{\lambda}{|y|^{n+\sigma}} &\leq K(y) \leq (2-\sigma)\frac{\Lambda}{|y|^{n+\sigma}} \label{e:lic1a1}\\ 
\int_{\R^n \setminus B_{\rho_0}} \frac{|K(y)-K(y-h)|}{|h|} \dd y &\leq C \qquad \text{every time $|h|<\frac {\rho_0} 2$} \label{e:lic1a2}
 \end{align}
 
A simple condition for \eqref{e:lic1a2} to hold would be that $|\grad K(y)| \leq \frac{\Lambda}{|y|^{1+n+\sigma}}$.

In the following theorem we give interior $C^{1,\alpha}$ estimates for fully nonlinear elliptic equations.

\begin{thm} \label{t:c1a} Assume $\sigma>\sigma_0$. There is a $\rho_0>0$ (depending on $\lambda$, $\Lambda$, $\sigma_0$ and $n$) so that if $I$ is a nonlocal elliptic operator with respect to $\LI_1$ in the sense of Definition \ref{d:axiomatic} and $u$ is a bounded function such that $I u = 0$ in $B_1$,
then there is a universal (depends only on $\lambda$, $\Lambda$, $n$ and $\sigma_0$) $\alpha>0$ such that $u \in C^{1+\alpha}(B_{1/2})$ and
\[ u_{C^{1+\alpha}(B_{1/2})} \leq C\left( \sup_{\R^n} |u|+|I0| \right) \]
for some constant $C>0$ (where by $I0$ we mean the value we obtain when we apply $I$ to the function that is constant equal to zero). The constant $C$ depends on $\lambda$, $\Lambda$, $\sigma_0$, $n$ and the constant in \eqref{e:lic1a2}.
\end{thm}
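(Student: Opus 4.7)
My plan is to iterate the Hölder estimate of Theorem \ref{t:ca} on incremental quotients of $u$, using translation invariance of $I$ to propagate extremal inequalities and the kernel regularity \eqref{e:lic1a2} to keep the far-field contributions bounded by $O(|h|)$. Each iteration will raise the known Hölder exponent of $u$ by a fixed $\alpha$, and a final run on second-order differences will push the result past Lipschitz to $C^{1+\alpha'}$.

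By the ellipticity in Definition \ref{d:axiomatic} together with the inclusion $\mathcal{L}_1 \subset \mathcal{L}_0$, the solution $u$ itself satisfies $\Mp u \geq -|I0|$ and $\Mm u \leq |I0|$ in $B_1$, so Theorem \ref{t:ca} yields $u \in C^\alpha(B_{3/4})$ with the required quantitative bound. After normalizing $\sup|u|+|I0| \leq 1$, translation invariance gives $I(u(\cdot+h))=0$ on $B_{1-|h|}$, so ellipticity with respect to $\mathcal{L}_1$ implies that $v_h := u(\cdot+h) - u(\cdot)$ satisfies $\Mp_{\mathcal{L}_1} v_h \geq 0 \geq \Mm_{\mathcal{L}_1} v_h$ on $B_{1-|h|}$. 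Applying Theorem \ref{t:ca} directly to $v_h$ only bounds $\|v_h\|_{C^\alpha}$ by $\|v_h\|_{L^\infty(\R^n)}=O(1)$, so the local gain $|v_h|\leq C|h|^\alpha$ on $B_{3/4}$ coming from the first step is invisible to the nonlocal extremal operator because of the outer values of $v_h$.

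The technical heart of the proof is to localize $v_h$ while losing only a controlled tail. Pick a smooth cutoff $\eta$ equal to $1$ on $B_{1/2}$ and vanishing outside $B_{3/4}$, and set $w := \eta v_h$, so that $\|w\|_{L^\infty(\R^n)} \leq C|h|^\alpha$. For any $L \in \mathcal{L}_1$ and any $x \in B_{1/4}$, splitting $(1-\eta)v_h = (1-\eta)u(\cdot+h) - (1-\eta)u$ and, in the first summand, changing variables $z \mapsto z-h$ to move the translation by $h$ onto the kernel produces
\[ L\bigl((1-\eta)v_h\bigr)(x) = \int u(z)(1-\eta)(z)\bigl[K(z-x-h)-K(z-x)\bigr]\,dz + E(x) , \]
where $E(x)$ is a cutoff-difference remainder supported in a thin shell around $\partial\,\mathrm{supp}(\eta)$. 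By \eqref{e:lic1a2} the first integral is $O(|h|\|u\|_\infty)$, and $E$ is $O(|h|\|u\|_\infty)$ directly, so $|L((1-\eta)v_h)(x)| \leq C|h|$ uniformly in $L \in \mathcal{L}_1$. Using the subadditivity $\Mp_{\mathcal{L}_1}(a+b) \geq \Mp_{\mathcal{L}_1} a + \Mm_{\mathcal{L}_1} b$ and its analogue, the localized function $w$ then satisfies $\Mp_{\mathcal{L}_1} w \geq -C|h|$ and $\Mm_{\mathcal{L}_1} w \leq C|h|$ on $B_{1/4}$. Applying Theorem \ref{t:ca} to $w$ now gives $\|v_h\|_{C^\alpha(B_{1/8})} \leq C(|h|^\alpha + |h|) \leq C|h|^\alpha$, i.e.\ $u \in C^{2\alpha}(B_{1/8})$.

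Iterating this bootstrap --- at each step using the currently known Hölder exponent of $u$ to estimate $\|v_h\|_\infty$ on the inner ball and \eqref{e:lic1a2} to absorb the tail exactly as above --- produces $u \in C^{k\alpha}$ as long as $k\alpha < 1$. To cross the Lipschitz threshold, the same scheme is applied to the symmetric second difference $\delta(u,x,h) = u(x+h)+u(x-h)-2u(x)$, which is itself a translation combination of $u$ (hence solves the extremal inequalities by the same translation-invariance argument) and is automatically of order $|h|^\beta$ whenever $u \in C^\beta$; the final output is $|\delta(u,x,h)| \leq C|h|^{1+\alpha'}$ in a neighborhood of the origin, which by the Zygmund-type characterization forces $u \in C^{1+\alpha'}$. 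I expect the main obstacle to be the tail estimate in the third paragraph: it is essential that \eqref{e:lic1a2} produces a full $O(|h|)$ error (not $O(|h|^{1-\epsilon})$), for only then does the iteration gain a full $|h|^\alpha$ at each step and terminate in finitely many iterations with constants remaining under control.
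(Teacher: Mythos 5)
Your proposal follows the paper's proof very closely: the initial $C^\alpha$ bound from Theorem \ref{t:ca}, passing the extremal inequalities to translated differences by translation invariance and Theorem \ref{t:Sclass}, the cutoff decomposition of the difference into an inner part and a tail, and the use of \eqref{e:lic1a2} via an ``integration by parts'' in the convolution variable to bound the tail contribution by $C|h|\sup|u|$ uniformly over $L\in\LI_1$. The only genuine difference is cosmetic normalization plus the final crossing of the Lipschitz threshold: the paper applies the same bootstrap to the first-order incremental quotient $w^h=(u(x+he)-u(x))/h$, which is bounded once $u$ is Lipschitz, and then invokes the standard telescoping lemma (Lemma 5.6 of \cite{CC}, which also covers the case $\alpha+\beta>1$), whereas you switch to the centered second difference and appeal to a Zygmund-type characterization of $C^{1,\alpha}$. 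Your route is viable, but as written there is a gap in the last step: Theorem \ref{t:ca} applied to $\eta\,\delta(u,\cdot,h)$ yields a bound $\|\delta(u,\cdot,h)\|_{C^\alpha}\leq C|h|$ on the inner ball, which is a modulus of continuity \emph{in $x$} of order $|h|$, not the pointwise decay $\|\delta(u,\cdot,h)\|_\infty\leq C|h|^{1+\alpha'}$ that the Zygmund characterization requires. Passing from the former to the latter needs an additional dyadic doubling argument based on the identity $\delta(u,x,2h)=2\delta(u,x,h)+\delta(u,x+h,h)+\delta(u,x-h,h)-2\delta(u,x,h)$ combined with the Hölder bound on $\delta(u,\cdot,h)/|h|$; this does work and gives the exponent $\alpha'=\alpha$, but it is the crux of that step and is simply asserted in your write-up. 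The paper's first-difference route avoids this entirely since the telescoping lemma handles the supra-Lipschitz regime directly. Everything else, including the uniform $O(|h|)$ tail estimate you correctly flag as essential (the paper obtains the equivalent $O(|h|^{1-\beta})$ after dividing by $|h|^\beta$), matches the paper.
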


\begin{proof}
Because of the assumption \eqref{e:lic1a1}, the class $\LI_1$ is included in $\LI_0$ given by \ref{e:uniformellipticity}. Since $I u = 0$ in $B_1$, in particular $\Mp u \geq Iu - I0 = -I0$ and also $\Mm u \leq I0$ in $B_1$, and therefore by Theorem \ref{t:ca} we have $u \in C^\alpha (B_{1-\delta})$ for any $\delta > 0$ with $\norm{u}_{C^\alpha} \leq C ( \sup |u|+|I0|)$.

Now we want to improve the obtained regularity iteratively by applying Theorem \ref{t:ca} again until we obtain Lipschitz regularity in a finite number of steps.

Assume we have proved that $u \in C^\beta(B_r)$ for some $\beta>0$ and $1/2 < r <1$. We want to apply Theorem \ref{t:ca} for the difference quotient
\[ w^h = \frac{ u(x+h) - u(x) }{|h|^\beta} \]
to obtain $u \in C^{\beta+\alpha}(B_{r-\delta})$. By Theorem \ref{t:Sclass}, $\Mp_{\LI_1} w^h \geq 0$ and $\Mm_{\LI_1} w^h \leq 0$ in $B_r$. In particular $\Mp w^h \geq 0$ and $\Mm w^h \leq 0$ in $B_r$.

The function $w^h$ is uniformly bounded in $B_r$ because $u \in C^\beta(B_r)$. Outside $B_r$ the function $w^h$ is not uniformly bounded, so we cannot apply Theorem \ref{t:ca} immediately. However, $w^h$ has oscillations that cause cancellations in the integrals because of our assumption \eqref{e:lic1a2}.

Let $\eta$ be a smooth cutoff function supported in $B_r$ such that $\eta \equiv 1$ in $B_{r - \delta/4}$,  where $\delta$ is some small positive number that will be determined later.

Let us write $w^h = w^h_1 + w^h_2$, where
\begin{align*}
w^h_1 &= \frac{ \eta u (x+h) - \eta u(x) }{|x|^\beta} \\
w^h_2 &= \frac{ (1-\eta) u (x+h) - (1-\eta) u(x) }{|x|^\beta} \\
\end{align*}

Let $x \in B_{r/2}$ and $|h|<\delta/16$. In this case $(1-\eta)u(x) = (1-\eta) u (x+h) = 0$ and $w^h(x) = w^h_1(x)$. We have to show that $w^h_1 \in C^{\beta+\alpha}(B_{r-\delta})$.

We have
\begin{align*}
\Mp w^h_1 \geq \Mp_{\LI_1} w^h_1 = \Mp_{\LI_1} (w^h - w^h_2) \geq 0 - \Mp_{\LI_1} w^h_2 \ , \\
\Mm w^h_1 \leq \Mm_{\LI_1} w^h_1 = \Mm_{\LI_1} (w^h - w^h_2) \leq 0 - \Mm_{\LI_1} w^h_2 \ .
\end{align*}
In order to apply Theorem \ref{t:ca}, we will show that $|\Mp_{\LI_1} w^h_2|$ and $|\Mm_{\LI_1} w^h_2|$ are bounded in $B_{r-\delta/2}$ by $C \sup|u|$ for some universal constant $C$. We must show those inequalities for any operator $L \in \LI_1$.

Since $(1-\eta)u(x) = (1-\eta) u (x+h) = 0$. $w^h(x) = w^h_1(x)$, we have the expression
\[ L w^h_2 = \int_{\R^n} \frac{ (1-\eta)u(x+y+h) - (1-\eta)u(x+y)}{|h|^\beta} K(y) \dd y \]
and we notice that both terms $(1-\eta)u(x+y+h) = (1-\eta)u(x+y) = 0$ for $|y| < \delta/8$. We take $\rho_0 = \delta/4$, therefore we can integrate by parts the incremental quotient to obtain
\begin{align*}
|L w^h_2| &= \abs{ \ \int_{\R^n} (1-\eta)u(x+y) \frac{ K(y) - K(y-h) }{|h|^\beta} \dd y} \\
&\leq \int_{\R^n} |(1-\eta)u(x+y)|  |h|^{1-\beta} \frac{ |K(y) - K(y-h)| }{|h|} \dd y  \qquad \text{using \eqref{e:lic1a2}} \\
&\leq |h|^{1-\beta} \int_{\R^n \setminus B_{\delta/4}} \frac{ |K(y) - K(y-h)| }{|h|} \dd y \ \sup_{\R^n} |u| \\
&\leq C |h|^{1-\beta} |u| \leq C \sup_{\R^n} |u|
\end{align*}

So, we have obtained $\Mp w^h_1 \geq -C \sup |u|$ and $\Mm w^h_1 \leq C \sup |u|$ in $B_{r-\delta/2}$ for $|h|<\delta/16$. We can apply theorem \ref{t:ca} to get that $w^h_1$ (and thus also $w^h$) is uniformly $C^\alpha$ in $B_{r-\delta}$. By the standard telescopic sum argument \cite{CC}, this implies that $u \in C^{\alpha+\beta}(B_{r-\delta})$.

Iterating the above argument, we obtain that $u$ is Lipschitz in $[1/\alpha]$ steps. Then, for any unit vector $e$, we use the same reasoning for the incremental quotients
\[ w^h = \frac{u(x+he) - u(x)}{h} \]
to conclude that $u \in C^{1,\alpha}$ in a smaller ball. If we choose the constant $\delta$ appropriately, we get $u \in C^{1,\alpha}(B_{1/2})$
\end{proof}

\begin{remark}
Note that the value of $\rho_0$ in Theorem \ref{t:c1a} is not scale invariant. If we want to scale the estimate to apply it to a function $u$ such that $Iu=0$ in $B_r$, then we also have to multiply the value of $\rho_0$ times $r$.
\end{remark}

\begin{remark}
Note that the family $\LI$ given by the operators $L$ with the form
\[ Lu(x) = \int_{\R^n} \frac{c_n (2-\sigma)}{\det A |A^{-1} z|^{n+\sigma}} \si(u,x,z) \dd z \]
satisfies the conditions \eqref{e:lic1a1} and \eqref{e:lic1a2}. Thus, from the arguments in section \ref{s:secondorder} and Theorem \ref{t:c1a}, we reconver the $C^{1,\alpha}$ estimates for fully nonlinear elliptic equations.
\end{remark}

%\section{$C^{\sigma+\alpha}$ estimates for convex equations. Classical solutions.}
%\begin{thm}
%Assume that $I$ is convex. Let $u$ be a bounded continuous function such that $I u = 0$ in $B_1$
%then there is a universal (depends only on $\lambda$, $\Lambda$, $n$ and $\sigma$) $\alpha>0$ such that $u \in C^{\sigma+\alpha}(B_{1/2})$ and
%\[ u_{C^{\sigma+\alpha}(B_{1/2})} \leq C \sup_{\R^n} |u| \]
%for some constant $C>0$.
%\end{thm}

\section{Truncated kernels.}
\label{s:truncated}

For applications, it is important to be able to deal with integro-differential operators whose kernels do not satisfy \eqref{e:uniformellipticity} in the whole space $\R^n$ but only in a neighborhood of the origin. For example we want to be able to deal with the operators related to truncated $\alpha$-stable Levy processes. In this section we extend our regularity resuls for this kind of operators.

We consider the following class $\LI$. We say that an operator $L$ belongs to $\LI$ if its corresponding kernel $K$ has the form
\begin{equation}
K(y) = K_1(y) + K_2(y) \geq 0 \ .
\end{equation}
Where 
\[ (2-\sigma) \frac{\lambda}{|x|^{n+\sigma}} \leq K_1(y) \leq (2-\sigma) \frac{\Lambda}{|x|^{n+\sigma}}\]
and $K_2 \in L^1(\R^n)$ with $\norm{K_2}_{L^1} \leq \kappa$.

In this class $\LI$ we can consider kernels that are comparable to $|y|^{-n-\sigma}$ near the origin but decay exponentially at infinity, or even become zero outside some ball. For example
\begin{align*}
K(y) &= \frac{1}{|y|^{n+\sigma}} e^{-|y|^2} \: \text{or} \\
K(y) &= \frac{a(y)}{|y|^{n+\sigma}} \chi_{B_1}(y) \: \text{ where } \lambda \leq a(y) \leq \Lambda \ . \\
\end{align*}

This class $\LI$ is larger than the class $\LI_0$ in \eqref{e:uniformellipticity}. However, in the following lemma we show that the extremal operators $\Mp_\LI$ and $\Mm_\LI$ are controlled by the corresponding extremal operators of $\LI_0$, $\Mp$ and $\Mm$, plus the $L^\infty$ norm of $u$.

\begin{lemma}
Let $u$ be a bounded function in $\R^n$ and $C^{1,1}$ at the point $x$. Then
\begin{align*}
\MLm u(x) &\geq \Mm u(x) - 4 \kappa \norm{u}_{L^\infty} \\
\MLp u(x) &\leq \Mp u(x) + 4 \kappa \norm{u}_{L^\infty}
\end{align*}
\end{lemma}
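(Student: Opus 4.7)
The plan is to decompose each $L \in \LI$ according to its kernel split $K = K_1 + K_2$ and bound the two pieces separately. Concretely, given any $L \in \LI$, define $L_1, L_2$ by
\[ L_i u(x) = \int_{\R^n} \si(u,x,y) K_i(y) \dd y \qquad (i=1,2), \]
so that $L u(x) = L_1 u(x) + L_2 u(x)$ whenever both integrals make sense. They do: $K_1$ satisfies \eqref{e:uniformellipticity}, so combined with the assumption $u \in C^{1,1}(x)$ (which controls $|\si(u,x,y)| \leq C|y|^2$ near the origin) the integral defining $L_1 u(x)$ converges absolutely; for $L_2$, the bound $|\si(u,x,y)| \leq 4\norm{u}_{L^\infty}$ together with $K_2 \in L^1$ gives absolute convergence.

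For the $L_1$ piece, the kernel $K_1$ satisfies exactly the two-sided bound defining the class $\LI_0$, hence $L_1 \in \LI_0$ and therefore
\[ \Mm u(x) \leq L_1 u(x) \leq \Mp u(x). \]
For the $L_2$ piece, the trivial pointwise estimate $|\si(u,x,y)| \leq 4 \norm{u}_{L^\infty}$ yields
\[ |L_2 u(x)| \leq 4 \norm{u}_{L^\infty} \norm{K_2}_{L^1} \leq 4\kappa \norm{u}_{L^\infty}. \]
Adding the two bounds:
\[ \Mm u(x) - 4\kappa \norm{u}_{L^\infty} \leq L u(x) \leq \Mp u(x) + 4\kappa \norm{u}_{L^\infty}. \]
Taking the infimum (resp.\ supremum) over $L \in \LI$ on the left-hand side gives the two claimed inequalities.

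There is no real obstacle; the only mild subtlety is that the decomposition $K = K_1 + K_2$ need not be symmetric in $y$ even though $K$ itself is. This does not matter: only the even part of $K_2$ contributes to $L_2 u(x)$ because $\si(u,x,y)$ is even in $y$, and symmetrizing $K_1, K_2$ preserves both the two-sided bound on $K_1$ and the $L^1$-bound on $K_2$. Thus the decomposition argument goes through without loss of generality.
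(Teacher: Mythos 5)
Your proof is correct and follows essentially the same route as the paper: split $K = K_1 + K_2$, bound the $K_1$ contribution by $\Mm u(x)$ and $\Mp u(x)$ since that piece lies in $\LI_0$, and bound the $K_2$ contribution by $4\kappa\norm{u}_{L^\infty}$ using $|\si(u,x,y)| \leq 4\norm{u}_{L^\infty}$, then take the infimum/supremum over $L \in \LI$. The extra remarks on absolute convergence and symmetrization are harmless additions, not a different argument.
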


\begin{proof}
All we have to do is show that for each $L \in \LI$, he have $Lu(x) \geq \Mm u(x) - \kappa \inf_{\R^n} u$ and $Lu(x) \leq \Mp u(x) + \kappa \sup_{\R^n} u$.

We have
\begin{align*}
Lu &= \int \si(u,x,y) (K_1(y)+K_2(y)) \dd y \\
&= \int \si(u,x,y) K_1(y) \dd y + \int \si(u,x,y) K_2(y) \dd y \\
&\geq \Mm u(x) + \int (u(x+y) + u(x-y) - 2u(x)) K_y(y) \dd y \\
&\geq \Mm u(x) - 4 \norm{u}_{L^\infty} \norm{K_2}_{L^1}  = \Mm u(x) - 4 \kappa \norm{u}_{L^\infty}
\end{align*}

In a similar way the inequality for $\MLp u(x)$ follows.
\end{proof}

\begin{cor} \label{c:mlp}
If $u$ is bounded in $\R^n$ and in an open set $\Omega$, $\MLp u \geq -C$ and $\MLm u \leq C$, then
\begin{align*}
\Mp u &\geq -C - 4 \kappa \norm{u}_{L^\infty} \\
\Mm u &\leq C + 4 \kappa \norm{u}_{L^\infty} \ .
\end{align*}
\end{cor}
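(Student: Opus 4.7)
The two inequalities in the corollary are the viscosity-sense counterparts of the pointwise bounds $\MLp u(x) \leq \Mp u(x) + 4\kappa\norm{u}_{L^\infty}$ and $\MLm u(x) \geq \Mm u(x) - 4\kappa\norm{u}_{L^\infty}$ established in the preceding lemma; I will detail the plan only for $\Mp u \geq -C - 4\kappa\norm{u}_{L^\infty}$, the bound for $\Mm$ being proved by the mirror argument using test functions from below.

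Let $x_0 \in \Omega$, let $\varphi$ be a $C^2$ test function on a neighborhood $N \subset \Omega$ of $x_0$ touching $u$ from above at $x_0$ in the sense of Definition \ref{d:viscositysolutions}, and set $v = \varphi$ on $N$ and $v = u$ on $\R^n \setminus N$. The hypothesis $\MLp u \geq -C$ in the viscosity sense gives $\MLp v(x_0) \geq -C$ classically. The plan is then to apply the preceding pointwise lemma to $v$, which would yield
\[ \Mp v(x_0) \geq \MLp v(x_0) - 4\kappa\norm{v}_{L^\infty} \geq -C - 4\kappa\norm{v}_{L^\infty}. \]

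The main technical obstacle is that $\norm{v}_{L^\infty}$ need not be controlled by $\norm{u}_{L^\infty}$: while $v \geq u \geq -\norm{u}_{L^\infty}$ everywhere (because $\varphi \geq u$ on $N$), the upper bound of $\varphi$ over $N$ is only tied down at the contact point by $\varphi(x_0) = u(x_0)$. I would resolve this by shrinking the test-function neighborhood. Given $\eps>0$, continuity of $\varphi$ at $x_0$ allows the choice of a smaller neighborhood $N_\eps \subset N$ of $x_0$ on which $\sup \varphi \leq u(x_0) + \eps$. The function $v_\eps$ built from $\varphi|_{\overline{N_\eps}}$ is still an admissible configuration in Definition \ref{d:viscositysolutions} and, combining $v_\eps \geq u \geq -\norm{u}_{L^\infty}$ with the bound above, satisfies $\norm{v_\eps}_{L^\infty} \leq \norm{u}_{L^\infty} + \eps$. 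Applying the pointwise lemma to $v_\eps$ therefore gives
\[ \Mp v_\eps(x_0) \geq -C - 4\kappa\norm{u}_{L^\infty} - 4\kappa\eps. \]
Since $v \geq v_\eps$ pointwise with equality at $x_0$, we have $\si(v,x_0,y) \geq \si(v_\eps,x_0,y)$ for every $y$, so by the explicit formula \eqref{e:Mp} for $\Mp$ (the integrand is monotone in $\si$) one obtains $\Mp v(x_0) \geq \Mp v_\eps(x_0)$. Letting $\eps \to 0$ concludes $\Mp v(x_0) \geq -C - 4\kappa\norm{u}_{L^\infty}$, which is precisely what the viscosity definition requires. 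Everything beyond this shrinking-and-monotonicity bookkeeping is a direct transcription of the preceding lemma.
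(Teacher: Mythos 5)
Your proof is correct and, if anything, more careful than the paper, which states this corollary without proof (evidently treating the passage from the pointwise lemma to the viscosity statement as immediate). The subtlety you flag is genuine: the glued test function $v$ from Definition \ref{d:viscositysolutions} satisfies $\norm{v}_{L^\infty} \geq \norm{u}_{L^\infty}$ in general, since $\sup_N \varphi$ is not controlled by $u(x_0)$, and the preceding lemma is applied to $v$, not to $u$. Moreover this bound cannot be relaxed to a one-sided bound on $\si(v,x_0,y)$: $K_2$ is only assumed to be $L^1$ with $K_1+K_2\geq 0$, so $K_2$ may change sign and the term $\int \si(v,x_0,y) K_2(y)\,\dd y$ requires a two-sided bound on $\si$. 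Your shrinking argument resolves this cleanly: taking $N_\eps$ small enough that $\sup_{N_\eps}\varphi \leq u(x_0)+\eps$ gives $\norm{v_\eps}_{L^\infty}\leq \norm{u}_{L^\infty}+\eps$, while the monotonicity of the integrand $\si\mapsto \Lambda\si^+-\lambda\si^-$ in \eqref{e:Mp}, combined with $v\geq v_\eps$ and $v(x_0)=v_\eps(x_0)$, yields $\Mp v(x_0)\geq \Mp v_\eps(x_0)$, so the bound transfers to the original test configuration. Both the shrinking step and the monotonicity step are needed, and you have identified and executed both correctly; this is exactly the argument the paper leaves implicit.
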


\begin{thm} \label{t:ca2}
Let $\sigma>\sigma_0$ for some $\sigma_0>0$. Let $u$ be bounded function in $\R^n$, such that
\begin{align*}
\MLp u &\geq -C_0 \qquad \text{in } B_1 \\
\MLm u &\leq C_0 \qquad \text{in } B_1
\end{align*}
then there is an $\alpha>0$ (depending only on $\lambda$, $\Lambda$, $n$ and $\sigma_0$) such that $u \in C^\alpha(B_{1/2})$ and
\[ u_{C^\alpha(B_{1/2})} \leq C \big( \norm{u}_{L^\infty} + C_0 \big) \]
for some constant $C>0$ that depends on $\lambda$, $\Lambda$, $n$ and $\sigma_0$ and $\kappa$.
\end{thm}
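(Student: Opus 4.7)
The proof should be essentially a one-step reduction to the previously established Hölder estimate (Theorem \ref{t:ca}), using Corollary \ref{c:mlp} as the bridge between the two classes. The class $\LI$ here is larger than $\LI_0$, so the hypotheses $\MLp u \geq -C_0$ and $\MLm u \leq C_0$ are \emph{weaker} than the corresponding hypotheses with $\Mp$, $\Mm$ that Theorem \ref{t:ca} requires. However, Corollary \ref{c:mlp} already packages the exact loss one incurs by swapping the extremal operators: the $L^1$ tail mass $\kappa$ of the $K_2$ component contributes at most $4\kappa \|u\|_{L^\infty}$ to the discrepancy.

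My plan is: first, observe that since $u$ is bounded in $\R^n$, Corollary \ref{c:mlp} applies pointwise (in the viscosity sense, by the stability of the extremal inequalities under the standard sup-/inf-convolution approximations, which transfer the pointwise bound to the viscosity inequality in $B_1$). This yields
\[
\Mp u \geq -C_0 - 4\kappa \|u\|_{L^\infty}, \qquad \Mm u \leq C_0 + 4\kappa \|u\|_{L^\infty} \qquad \text{in } B_1,
\]
in the viscosity sense. Second, I apply Theorem \ref{t:ca} directly with the enlarged right-hand side $\tilde C_0 := C_0 + 4\kappa \|u\|_{L^\infty}$; this is legitimate since $\sigma > \sigma_0$ and the constants $\lambda, \Lambda, n$ are the same. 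The theorem gives $u \in C^\alpha(B_{1/2})$ with
\[
[u]_{C^\alpha(B_{1/2})} \leq C \bigl( \|u\|_{L^\infty} + \tilde C_0 \bigr) = C \bigl( (1 + 4\kappa) \|u\|_{L^\infty} + C_0 \bigr) \leq C' \bigl( \|u\|_{L^\infty} + C_0 \bigr),
\]
where $C'$ absorbs the factor $1 + 4\kappa$ and so depends additionally on $\kappa$, matching the statement.

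The only subtle point — which I would want to check carefully but expect to be routine — is that Corollary \ref{c:mlp} is stated for classical evaluation of the operators at points where $u \in C^{1,1}(x)$, whereas the hypotheses of Theorem \ref{t:ca2} are in the viscosity sense. This is handled in the standard way: for any $C^2$ test function $\varphi$ touching $u$ from above at $x_0 \in B_1$, form the usual replacement $v = \varphi$ on a neighborhood $N$ and $v = u$ outside; then $v \in C^{1,1}(x_0)$ and is bounded with $\|v\|_{L^\infty} \leq \|u\|_{L^\infty} + \|\varphi\|_{L^\infty(N)}$, and since it is only the tail of $u$ that contributes to the $K_2$-term, the same argument in the proof of Corollary \ref{c:mlp} produces $\Mp v(x_0) \geq \MLp v(x_0) - 4\kappa \|u\|_{L^\infty} \geq -C_0 - 4\kappa \|u\|_{L^\infty}$, which is exactly what the viscosity inequality for $\Mp u$ requires. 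The symmetric inequality for $\Mm u$ is identical. With this in hand, Theorem \ref{t:ca} applies and the proof is complete.
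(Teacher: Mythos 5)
Your proposal matches the paper's proof exactly: apply Corollary \ref{c:mlp} to transfer the viscosity inequalities for $\MLp$, $\MLm$ into inequalities for $\Mp$, $\Mm$ with the right-hand side enlarged by $4\kappa\norm{u}_{L^\infty}$, then invoke Theorem \ref{t:ca} and absorb the factor $1+4\kappa$ into the constant. The extra paragraph you include on carrying the pointwise estimate of Corollary \ref{c:mlp} through to the viscosity setting via test functions is a correct and useful elaboration of a step the paper leaves implicit.
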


\begin{proof}
Form Corollary \ref{c:mlp}
\begin{align*}
\Mp u &\geq -C_0 - 4 \kappa \norm{u}_{L^\infty} \\
\Mm u &\leq C_0 + 4 \kappa \norm{u}_{L^\infty} \ .
\end{align*}

Then, from Theorem \ref{t:ca}
\begin{align*}
u_{C^\alpha(B_{1/2})} &\leq C \big( \norm{u}_{L^\infty} + C_0 + 4 \kappa \norm{u}_{L^\infty} \big) \\
&\leq \tilde C \big( \norm{u}_{L^\infty} + C_0 \big) \ .
\end{align*}
\end{proof}

If we use Theorem \ref{t:ca2} instead of Theorem \ref{t:ca} in the proof of Theorem \ref{t:c1a}, we obtain a $C^{1,\alpha}$ result for a class $\LI$ that includes kernels with exponential decay or compact support.

\begin{thm} \label{t:genc1a}
Let $\LI$ be the class of operators with kernels $K$ such that
\begin{align}
\int_{\R^n \setminus B_{\rho_0}} \frac{|K(y)-K(y-h)|}{|h|} \dd y &\leq C \qquad \text{every time $|h|<\frac {\rho_0} 2$} \label{e:prevcond}\\
K &= K_1 + K_2 \\
(2-\sigma)\frac{\lambda}{|y|^{n+\sigma}} &\leq K_1(y) \leq (2-\sigma)\frac{\Lambda}{|y|^{n+\sigma}} \\
\norm{K_2}_{L^1} &\leq \kappa
\end{align}

There is a $\rho_0>0$ so that if $I$ be a nonlocal elliptic operator in the sense of Definition \ref{d:axiomatic} and $u$ is a bounded function such that $I u = 0$ in $B_1$
then there is an $\alpha>0$ (depending only on $\lambda$, $\Lambda$, $n$ and $\sigma$) such that $u \in C^{1+\alpha}(B_{1/2})$ and
\[ u_{C^{1+\alpha}(B_{1/2})} \leq C \left( \sup_{\R^n} |u| + |I0| \right) \]
for some constant $C>0$.
\end{thm}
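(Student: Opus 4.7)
The plan is to run the proof of Theorem \ref{t:c1a} essentially verbatim, using Theorem \ref{t:ca2} in place of Theorem \ref{t:ca} everywhere a H\"older estimate is invoked. Since hypothesis \eqref{e:prevcond} is posed on the full kernel $K = K_1 + K_2$, the integration-by-parts estimate at the heart of that earlier proof goes through without modification, and the decomposition $K = K_1 + K_2$ enters only indirectly through the $L^\infty$-corrected H\"older bound of Theorem \ref{t:ca2}.

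In more detail, first apply Theorem \ref{t:ca2} to $u$ directly: since $Iu=0$ in $B_1$, ellipticity with respect to $\LI$ yields $\MLp u \geq -|I0|$ and $\MLm u \leq |I0|$, so $u \in C^\alpha(B_{1-\delta_0})$ for any $\delta_0>0$ with the expected norm bound. Then bootstrap: assuming $u \in C^\beta(B_r)$ for some $\beta \in (0,1)$ and $r$ slightly less than $1$, I would consider, for small $|h|$, the incremental quotient $w^h(x) = (u(x+h)-u(x))/|h|^\beta$. By Theorem \ref{t:Sclass} and translation invariance of $I$, we have $\MLp w^h \geq 0$ and $\MLm w^h \leq 0$ in $B_r$. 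Decompose $w^h = w_1^h + w_2^h$ via a smooth cutoff $\eta$ with $\eta \equiv 1$ on $B_{r-\delta/4}$ and $\supp \eta \subset B_r$, exactly as in Theorem \ref{t:c1a}, so that $w_1^h$ is uniformly bounded on $\R^n$ while $w_2^h$ vanishes on a fixed neighborhood of the observation region.

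The key step is to bound $|L w_2^h|$ uniformly for $L \in \LI$. For $x \in B_{r-\delta/2}$ and $|h|<\delta/16$, the factor $(1-\eta)u$ vanishes near $x$, so moving the discrete difference from $u$ onto the kernel (change of variables $y \mapsto y-h$ in one of the two increment terms) gives
\[ |L w_2^h(x)| \leq |h|^{1-\beta}\, \Big(\sup_{\R^n}|u|\Big) \int_{\R^n \setminus B_{\rho_0}} \frac{|K(y)-K(y-h)|}{|h|} \dd y \leq C|h|^{1-\beta} \sup_{\R^n}|u|, \]
provided $\rho_0 \leq \delta/8$ (this inequality is what fixes the value of $\rho_0$ in the theorem). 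The crucial point is that hypothesis \eqref{e:prevcond} is stated for the full kernel $K = K_1+K_2$, so the $L^1$ tail $K_2$ introduces no extra error here. This gives $\MLp w_1^h \geq -C\sup|u|$ and $\MLm w_1^h \leq C\sup|u|$ on the observation region, and Theorem \ref{t:ca2} applied to $w_1^h$ produces a uniform $C^\alpha$ bound on $B_{r-\delta}$. A standard telescoping-sum argument \cite{CC} promotes this to $u \in C^{\beta+\alpha}(B_{r-\delta})$.

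Iterating this gain finitely many times reaches Lipschitz regularity, and one further round applied to the unit incremental quotients $(u(x+he)-u(x))/h$ in each direction $e$ upgrades $u$ to $C^{1,\alpha}$. By choosing the shrink parameter $\delta$ small enough so that the cumulative radius losses still leave $B_{1/2}$ inside, the stated estimate follows. The main subtlety, such as it is, is simply to verify that the integration-by-parts bound remains uniform across $L \in \LI$ despite the presence of the possibly irregular $L^1$ piece $K_2$; but hypothesis \eqref{e:prevcond} is designed precisely to make this so, and no further structural use of the decomposition $K = K_1 + K_2$ is needed beyond what already enters through Theorem \ref{t:ca2}.
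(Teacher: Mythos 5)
Your proposal is correct and matches the paper's own argument, which is stated tersely as ``use Theorem~\ref{t:ca2} in place of Theorem~\ref{t:ca} in the proof of Theorem~\ref{t:c1a}.'' You have simply fleshed out that sentence: the key observation that hypothesis \eqref{e:prevcond} is imposed on the full kernel $K$, so the integration-by-parts bound on $L w_2^h$ is unchanged and the $L^1$ piece $K_2$ only enters through the $\kappa$-dependent constant in Theorem~\ref{t:ca2}, is exactly the intended reading.
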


\begin{remark}
We can prove Theorem \ref{t:ca2} because in our $C^\alpha$ estimates we allow a bounded right hand side. Theorem \ref{t:genc1a} would be more general if the inequality \eqref{e:prevcond} was required with $K_1$ instead of $K$. In order to prove such result we would need to have $C^{1,\alpha}$ estimates like the ones of Theorem \ref{t:c1a} with a nonzero right hand side. This type of results is well known for elliptic partial differential equations \cite{C2} and we are planning to extend it to nonlocal equations in future work.

It is not hard to check that if the assumption \eqref{e:prevcond} involved $K_1$ instead of $K$, then the class $\LI$ above would be the same as the larger class $\LI_0$ of \eqref{e:uniformellipticity} and Theorem \ref{t:genc1a} would apply to a very large family of operators.
\end{remark}

\bibliographystyle{plain}   % Here the bibliography 
\bibliography{nonl}             % is inserted.

\begin{thebibliography}{10}

\bibitem{Sayah}
Sayah Awatif.
\newblock \'{E}quations d'{H}amilton-{J}acobi du premier ordre avec termes
  int\'egro-diff\'erentiels. {I}. {U}nicit\'e des solutions de viscosit\'e.
\newblock {\em Comm. Partial Differential Equations}, 16(6-7):1057--1074, 1991.

\bibitem{BI}
Guy Barles and Cyril Imbert.
\newblock Second-order elliptic integro-differential equations: Viscosity
  solutions' theory revisited.
\newblock Preprint.

\bibitem{BK2}
Richard~F. Bass and Moritz Kassmann.
\newblock Harnack inequalities for non-local operators of variable order.
\newblock {\em Trans. Amer. Math. Soc.}, 357(2):837--850 (electronic), 2005.

\bibitem{BK}
Richard~F. Bass and Moritz Kassmann.
\newblock H\"older continuity of harmonic functions with respect to operators
  of variable order.
\newblock {\em Comm. Partial Differential Equations}, 30(7-9):1249--1259, 2005.

\bibitem{BL}
Richard~F. Bass and David~A. Levin.
\newblock Harnack inequalities for jump processes.
\newblock {\em Potential Anal.}, 17(4):375--388, 2002.

\bibitem{CC}
L.~A. Caffarelli and Xavier Cabr{\'e}.
\newblock {\em Fully nonlinear elliptic equations}, volume~43 of {\em American
  Mathematical Society Colloquium Publications}.
\newblock American Mathematical Society, Providence, RI, 1995.

\bibitem{C2}
Luis~A. Caffarelli.
\newblock Interior a priori estimates for solutions of fully nonlinear
  equations.
\newblock {\em Ann. of Math. (2)}, 130(1):189--213, 1989.

\bibitem{I}
Hitoshi Ishii.
\newblock On uniqueness and existence of viscosity solutions of fully nonlinear
  second-order elliptic {PDE}s.
\newblock {\em Comm. Pure Appl. Math.}, 42(1):15--45, 1989.

\bibitem{J}
Robert Jensen.
\newblock The maximum principle for viscosity solutions of fully nonlinear
  second order partial differential equations.
\newblock {\em Arch. Rational Mech. Anal.}, 101(1):1--27, 1988.

\bibitem{S1}
L.~Silvestre.
\newblock H\"older estimates for solutions of integro-differential equations
  like the fractional laplace.
\newblock {\em Indiana University Mathematics Journal.}, 55(3):1155--1174,
  2006.

\bibitem{So}
Halil~Mete Soner.
\newblock Optimal control with state-space constraint. {II}.
\newblock {\em SIAM J. Control Optim.}, 24(6):1110--1122, 1986.

\bibitem{SV}
Renming Song and Zoran Vondra{\v{c}}ek.
\newblock Harnack inequality for some classes of {M}arkov processes.
\newblock {\em Math. Z.}, 246(1-2):177--202, 2004.

\end{thebibliography}
\index{Bibliography@\emph{Bibliography}}%
\end{document}